    \setlist[enumerate]{label={\textnormal{(\alph*)~\,}}, nosep, align=left, left={\parindent}, labelsep=0pt, widest=b, rightmargin=0pt}
\numberwithin{equation}{section}
\theoremstyle{definition}
    \newtheorem{pdef}{Definition}[section]
    \newtheorem{pex}{Example}[section]
\theoremstyle{plain}
    \newtheorem{pthrm}{Theorem}[section]
    \newtheorem{pmain}{Theorem}
    \newtheorem{plem}[pthrm]{Lemma}
    \newtheorem{pcor}[pthrm]{Corollary}
    \newtheorem{pprop}[pthrm]{Proposition}
\theoremstyle{remark}
    \newtheorem{prmrk}{Remark}[section]
\DeclareMathOperator{\sech}{sech}
\newcommand{\injto}{\hookrightarrow}
\newcommand{\surjto}{\twoheadrightarrow}
\newcommand{\isoto}[1][\sim]{\stackrel{#1}{\to}}
\newcommand{\longbijto}{\longleftrightarrow}
\newcommand{\del}{\nabla}
\newcommand{\Ric}{\mathrm{Ric}}
\DeclareMathOperator{\tr}{tr}
\DeclareMathOperator{\Ad}{Ad}
\newcommand{\II}{\mathrm{I\!\!\:I}}
\newcommand{\III}{\mathrm{I\!\!\:I\!\!\:I}}
\newcommand{\Scal}{\mathrm{Scal}}
\DeclareMathOperator{\Grad}{grad}
\DeclareMathOperator{\Div}{div}
\DeclareMathOperator{\Curl}{curl}
\begin{document}

\title[Complex-valued extension of mean curvature]{Complex-valued extension of mean curvature \\ for surfaces in Riemann-Cartan geometry}
\author[Dongha Lee]{Dongha Lee}
\address{Korea Advanced Institute of Science and Technology, 291, Daehak-ro, Yuseong-gu, Daejeon, Republic of Korea}
\email{leejydh97@kaist.ac.kr}
\date{\today}
\subjclass[2020]{53A10, 53C05}
\keywords{Mean curvature, Riemann-Cartan geometry, Weitzenb{\"o}ck geometry}
\begin{abstract}
We extend the framework of submanifolds in Riemannian geometry to Riemann-Cartan geometry, which addresses connections with torsion. This procedure naturally introduces a 2-form on submanifolds associated with the nontrivial ambient torsion, whose Hodge dual plays the role of an imaginary counterpart to mean curvature for surfaces in a Riemann-Cartan 3-manifold. We observe that this complex-valued geometric quantity interacts with a number of other geometric concepts including the Hopf differential and the Gauss map, which generalizes classical minimal surface theory.
\end{abstract}
\maketitle

\section{Introduction}

Connections with torsion were first proposed and studied by Cartan~\cites{MR1509253, MR1509255, MR1509263}. His groundbreaking work had a wide influence on theoretical physics, especially on general relativity, which eventually led to what is now referred to as \emph{Einstein-Cartan theory} (see, e.g.,~\cite{TrautmanEC} for details). Moreover, in a different vein, Einstein~\cite{EinsteinTP} attempted to associate the torsion with the electromagnetic field to unify general relativity with electromagnetism, although this idea could not be fully realized. This approach has become known as \emph{teleparallelism}. For further historical background in physics, see~\cite{MR2037618}. Mathematically, Cartan's work has evolved into what is now called \emph{Riemann-Cartan geometry} (see, e.g.,~\cite{MR2866744}). It concerns a Riemannian (or pseudo-Riemannian) manifold endowed with a metric-compatible connection, which may have nonzero torsion. A trivial example is Riemannian geometry with the Levi-Civita connection. Another important example is \emph{Weitzenb{\"o}ck geometry} with a nonholonomic flat connection, which is based on a fixed global smooth frame (Definition~\ref{D:Weitzenbock-affine}).

Despite the profound influence of Cartan's work on theoretical physics, it seems that it has not received the same level of attention in mathematics, particularly in the study of submanifolds. This lack of study is surprising, given the potential of torsion in geometric and physical contexts. The present paper begins with extending the framework of submanifolds in Riemannian geometry to Riemann-Cartan geometry (of signature \( (n,0) \)), accounting for the presence of torsion. This requires us to carefully examine whether the fundamental geometric tools and concepts for submanifolds in Riemannian geometry work effectively in Riemann-Cartan geometry as well. We shall observe that this procedure naturally introduces a \( 2 \)-form on submanifolds associated with the nontrivial ambient torsion (Definition~\ref{D:torsion-form}).

For surfaces in a Riemann-Cartan \( 3 \)-manifold, it turns out that the Hodge dual of this \( 2 \)-form produces a smooth scalar field that plays the role of an ``imaginary counterpart'' to mean curvature. The main concern of the present paper is this complex-valued geometric quantity on surfaces, denoted by
\begin{align}
\boldsymbol{H} & = H + \boldsymbol{i} {\star} \tau,
\end{align}
where \( H \) is the mean curvature and \( \tau \) is the aforementioned \( 2 \)-form associated with the ambient torsion in Riemann-Cartan geometry (Definition~\ref{D:H}). This extends the notion of mean curvature in Riemannian geometry, as it descends to the usual mean curvature for the Levi-Civita connection. From the perspective of Weitzenb{\"o}ck geometry, this is an alternative way to extend the notion of mean curvature in Euclidean geometry, distinct from Riemannian geometry. The complex-valued quantity \( \boldsymbol{H} \) of our interest interacts with a number of other geometric concepts including the Hopf differential and the Gauss map. We shall present various results regarding this quantity, most of which generalize well-known fundamental results in classical minimal surface theory.

The outline of this paper is as follows. In Section~\ref{S:preliminaries}, we briefly introduce foundational concepts in Riemann-Cartan geometry and Weitzenb{\"o}ck geometry, which are necessary for the discussion in the present paper. In Section~\ref{S:submanifolds}, we delve into fundamental geometric tools such as the second fundamental form, the Weingarten map, and the extrinsic Gaussian and mean curvatures, which are adapted to reflect the presence of ambient torsion. It will be shown that most properties including the Gauss equation, Theorema Egregium, and the Gauss-Bonnet theorem work well in Riemann-Cartan geometry, despite the failure of some symmetries of the Riemann curvature (Propositions~\ref{P:SSFF},~\ref{P:TE}, and~\ref{P:Gauss-Bonnet}).

In Section~\ref{S:surfaces}, focusing on surfaces in a Riemann-Cartan 3-manifold, we explore the geometric properties of the complex-valued quantity \( \boldsymbol{H} \) in various perspectives. In Subsection~\ref{SS:divergence-curl}, we obtain a description of the quantity by the Gauss map in Weitzenb{\"o}ck geometry: the real part \( H \) is expressed as a certain ``divergence'' of the Gauss map, while the imaginary part \( {\star} \tau \) is expressed as a certain ``curl'' of the Gauss map (Proposition~\ref{P:divergence-curl} and Corollary~\ref{C:divergence-curl}). This agrees with the classical fact that the usual mean curvature equals the divergence of the Gauss map in Euclidean geometry. The following is one of the interesting observations in this subsection, regarding the choice of a global smooth frame.

\begin{pmain}[Theorem~\ref{T:gauge-transformation} and Corollary~\ref{C:gauge-transformation}] \label{M:A}
Let \( S \) be an oriented smooth surface embedded in an oriented parallelizable Riemannian\/ \( 3 \)-manifold \( M \). Let \( s \) be a global oriented orthonormal smooth frame for \( M \). Let \( n \colon S \to \mathbb{S}^2 \) be the Gauss map of \( S \) with respect to \( s \) (Definition~\ref{D:Gauss-map}). Then, for any smooth map \( \theta \colon S \to \mathbb{R} / 2 \boldsymbol{\pi} \mathbb{Z} \), the gauge transformation associated with the rotation \( \boldsymbol{e}^{\theta \hat{n}} \colon S \to \mathrm{SO}(3) \) of angle \( \theta \) about the axis \( n \) yields
\begin{align}
\boldsymbol{H} {\left( s \cdot \boldsymbol{e}^{\theta \hat{n}} \right)} & = \boldsymbol{H}(s) \boldsymbol{e}^{\boldsymbol{i} \theta}.
\end{align}
In particular, the gauge transformation of the Weitzenb{\"o}ck connection via a rotation about the normal axis does not change the norm\/ \( {\left| \boldsymbol{H} \right|} \).
\end{pmain}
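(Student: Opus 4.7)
The strategy is to package $H$ and ${\star}\tau$ into the complex-linear coefficient of the Weitzenb\"ock Weingarten-like endomorphism of the tangent plane, and then track how that endomorphism transforms under the gauge change.

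Work in the $s$-trivialization $TM\cong M\times\mathbb{R}^{3}$ induced by the orthonormal frame $s$: the Weitzenb\"ock connection $\nabla^{s}$ becomes componentwise differentiation, and the unit normal corresponds to the Gauss map $n\colon S\to\mathbb{S}^{2}\subset\mathbb{R}^{3}$. Set
\[
\Phi^{s}(X) := -\nabla^{s}_{X}\vec{n} = -\mathrm{d}n(X),\qquad X\in T_{p}S;
\]
this takes values in $T_{p}S=\vec{n}^{\perp}$ because $\nabla^{s}$ is metric-compatible. For any oriented orthonormal basis $(e_{1},e_{2})$ of $T_{p}S$, differentiating $\langle e_{b},\vec{n}\rangle = 0$ in direction $e_{a}$, together with $\tau(X,Y)=\langle T^{s}(X,Y),\vec{n}\rangle$ and the tangency of $[e_{1},e_{2}]$, yields
\[
H^{s}=\tr[\Phi^{s}],\qquad {\star}\tau^{s}=[\Phi^{s}]_{21}-[\Phi^{s}]_{12}.
\]
Identifying $T_{p}S\cong\mathbb{C}$ via $(e_{1},e_{2})$ and decomposing the $\mathbb{R}$-linear endomorphism $\Phi^{s}$ of $\mathbb{C}$ as $\Phi^{s}(z)=\alpha z+\beta\bar{z}$, these combine into $\boldsymbol{H}(s)=2\alpha$.

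Under $s\mapsto s\cdot g$ with $g(q)=\boldsymbol{e}^{\theta(q)\hat{n}(q)}$, the fact that rotation about $n$ fixes $n$ gives $n_{s\cdot g}(q)=g(q)^{-1}n(q)=n(q)$ pointwise as $\mathbb{R}^{3}$-valued functions. Consequently, in $s$-coordinates the physical vector $\nabla^{s\cdot g}_{X}\vec{n}$ is represented by $g\cdot\mathrm{d}n_{s\cdot g}(X)=g\cdot\mathrm{d}n(X)$, while $\nabla^{s}_{X}\vec{n}$ is represented by $\mathrm{d}n(X)$. Hence
\[
\nabla^{s\cdot g}_{X}\vec{n}=L_{g}\bigl(\nabla^{s}_{X}\vec{n}\bigr),
\]
where $L_{g}\colon T_{p}M\to T_{p}M$ is the orthogonal transformation corresponding to multiplication by $g(p)$ in $s$-coordinates. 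Since $L_{g}$ fixes $\vec{n}$ and restricts on $T_{p}S$ to the rotation $R_{\theta}$ by angle $\theta$, we conclude
\[
\Phi^{s\cdot g}\bigl|_{T_{p}S} = R_{\theta}\circ\Phi^{s}\bigl|_{T_{p}S}.
\]
In the complex identification, $R_{\theta}$ is multiplication by $\boldsymbol{e}^{\boldsymbol{i}\theta}$, so the complex-linear coefficient transforms as $\alpha\mapsto\boldsymbol{e}^{\boldsymbol{i}\theta}\alpha$, giving $\boldsymbol{H}(s\cdot\boldsymbol{e}^{\theta\hat{n}})=\boldsymbol{e}^{\boldsymbol{i}\theta}\boldsymbol{H}(s)$. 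The corollary on the invariance of $|\boldsymbol{H}|$ is then immediate from $|\boldsymbol{e}^{\boldsymbol{i}\theta}|=1$.

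The main delicate step is the identity $\nabla^{s\cdot g}_{X}\vec{n}=L_{g}(\nabla^{s}_{X}\vec{n})$: for a general gauge transformation, an additional $\mathrm{d}g$-correction would appear that couples $H$ and ${\star}\tau$ nontrivially. The special feature $g\cdot n=n$ of rotations about the normal axis is precisely what makes that correction vanish and reduces the gauge action on $\boldsymbol{H}$ to a clean multiplication by $\boldsymbol{e}^{\boldsymbol{i}\theta}$.
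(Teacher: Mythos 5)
Your proposal is correct, and it reaches the conclusion by a genuinely different route from the paper. The paper first proves a gauge-transformation formula for an \emph{arbitrary} smooth \( g \colon M \to \mathrm{SO}(3) \) (Lemma~\ref{L:gauge-transformation}, via the pullback of the Maurer-Cartan form and the divergence/curl description \( H^s = -\Div_{s^{\top}}(n^s) \), \( {\star}\tau^s = \Div_{s^{\times}}(n^s) \) of Corollary~\ref{C:divergence-curl}), then substitutes Rodrigues' formula for \( g = \boldsymbol{e}^{\theta\hat{e}} \) with an arbitrary axis \( e \) (Proposition~\ref{P:gauge-transformation}), and finally specializes to \( e = n \), where the gradient term dies and the remaining \( \Div \)/\( \Curl \) terms are re-identified with \( H \) and \( {\star}\tau \). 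You instead exploit from the outset the special feature \( g n = n \): the Gauss map, hence its differential, is unchanged, so the gauge-transformed Weingarten endomorphism is exactly \( L_g \circ W^s \) with \( L_g \) restricting to the rotation \( R_{\theta} \) of \( T_pS \); packaging \( \boldsymbol{H} = \tr W + \boldsymbol{i}\,(W^2_1 - W^1_2) \) as twice the \( \mathbb{C} \)-linear coefficient of \( W \) (which is legitimate and consistent with the identities \( H = W^1_1 + W^2_2 \), \( {\star}\tau = W^2_1 - W^1_2 \) used in the proof of Proposition~\ref{P:umbilic}) turns postcomposition with \( R_{\theta} \) into multiplication by \( \boldsymbol{e}^{\boldsymbol{i}\theta} \). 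Your computation \( \del^{s\cdot g}_X N = L_g(\del^s_X N) \) checks out directly from the definition of the Weitzenb{\"o}ck connection, and the identification \( \boldsymbol{H} = 2\alpha \) is basis-independent among oriented orthonormal frames, so the argument is complete. What your route buys is brevity and a conceptual explanation of \emph{why} the answer is a clean complex scalar multiplication; what it gives up is the general axis-angle formula of Proposition~\ref{P:gauge-transformation}, which the paper needs elsewhere and which, as you correctly note, acquires extra \( \mathrm{d}g \)-corrections precisely because a general rotation moves the Gauss map.
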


\noindent More general gauge transformation formulas (i.e., for arbitrary rotations) are given in Lemma~\ref{L:gauge-transformation} and Proposition~\ref{P:gauge-transformation}, from which Theorem~\ref{M:A} follows.

In Subsection~\ref{SS:Hopf-differential}, we apply the general idea of the Hopf differential originated in~\cite{MR1013786} to the context of Riemann-Cartan geometry. Recall that an oriented smooth surface endowed with a Riemannian metric naturally inherits a unique complex structure from its conformal structure. We consider two quadratic differentials \( \varphi \) and \( \psi \) on surfaces defined by
\begin{align}
\varphi & = \II {\left( \frac{\partial}{\partial z}, \frac{\partial}{\partial z} \right)} \, \mathrm{d} z^2 \quad \text{and} \quad \psi = \III {\left( \frac{\partial}{\partial z}, \frac{\partial}{\partial z} \right)} \, \mathrm{d} z^2,
\end{align}
where \( \II \) and \( \III \) denote the second and third fundamental forms respectively in Riemann-Cartan geometry (Definitions~\ref{D:Hopf-differential} and~\ref{D:TFF}). In Riemannian geometry, the former quadratic differential \( \varphi \) corresponds to the Hopf differential. The main result of this subsection can be summarized as follows.

\begin{pmain}[Corollaries~\ref{C:Hopf-differential} and~\ref{C:TFF}] \label{M:B}
Let \( S \) be a smooth surface embedded in an oriented Riemann-Cartan\/ \( 3 \)-manifold, which is oriented by the unit normal vector field \( N \) along \( S \). Suppose that the following skew-symmetric smooth\/ \( (1,2) \)-tensor field \( L \) on \( S \) vanishes everywhere.
\begin{align}
L {\left( X, Y \right)} & = \tilde{R} {\left( X, Y \right)} N - J W J T {\left( X, Y \right)} \qquad (X, Y \in \Gamma(TS)),
\end{align}
where \( \tilde{R} \) is the ambient curvature, \( J \) is the almost complex structure on \( S \), \( W \) is the Weingarten map, and \( T \) is the torsion of \( S \). Then the following two hold.
\begin{enumerate}
\item \( \boldsymbol{H} \) is a holomorphic function on \( S \) iff \( \varphi \) is a holomorphic quadratic differential on \( S \).
\item If \( \boldsymbol{H} \) is a holomorphic function on \( S \), then \( \psi \) is a holomorphic quadratic differential on \( S \).
\end{enumerate}
\end{pmain}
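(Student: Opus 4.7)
The strategy is to derive a Codazzi--Mainardi-type identity in Riemann-Cartan geometry whose obstruction is precisely the tensor $L$, and then to read off both conclusions in isothermal coordinates.

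\textbf{Isothermal setup.} Around a point of $S$, I would choose isothermal coordinates $z = x + \boldsymbol{i} y$ so that the induced metric is $g = \lambda^2\, dz\, d\bar z$ and $J$ acts as multiplication by $\boldsymbol{i}$ on $T^{1,0} S$; in particular $g(\partial_z, \partial_z) = 0$ and $g(\partial_z, \partial_{\bar z}) = \lambda^2/2$. Decomposing $\II = \II^{\mathrm{sym}} + \tau$ and using $\boldsymbol{H} = H + \boldsymbol{i}\, {\star}\tau$, a direct expansion gives
\[
\II(\partial_z, \partial_{\bar z}) = \tfrac{\lambda^2}{2}\, \boldsymbol{H},
\]
while $\II(\partial_z, \partial_z)$ depends only on $\II^{\mathrm{sym}}$. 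Hence $\varphi$ is a holomorphic quadratic differential iff $\partial_{\bar z} \II(\partial_z, \partial_z) \equiv 0$, and $\boldsymbol{H}$ is a holomorphic function iff $\partial_{\bar z}\bigl(\lambda^{-2}\, \II(\partial_z, \partial_{\bar z})\bigr) \equiv 0$.

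\textbf{Codazzi--Mainardi with torsion.} I would derive a Riemann-Cartan Codazzi--Mainardi identity by differentiating the Gauss formula $\tilde\nabla_X Y = \nabla_X Y + \II(X,Y) N$ once more, comparing $\tilde\nabla_X \tilde\nabla_Y Z$ with $\tilde\nabla_Y \tilde\nabla_X Z$ and $\tilde\nabla_{[X,Y]} Z$, and using both $[X,Y] = \nabla_X Y - \nabla_Y X - T(X,Y)$ and $\tilde\nabla_X N = -WX$. After separating tangential and normal parts, I expect the identity
\[
(\nabla_X \II)(Y,Z) - (\nabla_Y \II)(X,Z) - \II\bigl(T(X,Y), Z\bigr) = \bigl\langle L(X,Y),\, Z \bigr\rangle_g.
\]
The combination $J W J$ in $L$ should emerge via the two-dimensional algebraic identity $J W J = W - \tr(W)\, \mathrm{id}$, which is the natural mechanism by which the tangential torsion term gets rerouted to the same side as the ambient curvature contribution $\tilde R(X,Y) N$.

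\textbf{Part (a).} Substituting $(X,Y,Z) = (\partial_{\bar z}, \partial_z, \partial_z)$ into the Codazzi identity and unwinding the covariant derivatives (the Christoffel contributions combine cleanly with the $\lambda^2$ conformal factor) yields a relation of the form
\[
\partial_{\bar z}\, \II(\partial_z, \partial_z) \;-\; \tfrac{\lambda^2}{2}\, \partial_z \overline{\boldsymbol{H}} \;=\; \bigl\langle L(\partial_{\bar z}, \partial_z),\, \partial_z \bigr\rangle_g.
\]
Under the hypothesis $L \equiv 0$ the right-hand side vanishes, so $\partial_{\bar z} \II(\partial_z, \partial_z) \equiv 0$ is equivalent to $\partial_z \overline{\boldsymbol{H}} \equiv 0$; conjugating gives $\partial_{\bar z} \boldsymbol{H} \equiv 0$, which is exactly the claimed equivalence between holomorphicity of $\varphi$ and of $\boldsymbol{H}$.

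\textbf{Part (b).} The Cayley--Hamilton identity for the Weingarten map on a two-dimensional tangent bundle reads $W^2 - \tr(W)\, W + \det(W)\, \mathrm{id} = 0$. Passing to the associated quadratic forms on $T^{\mathbb{C}} S$ and complexifying so that $\tau$ is reabsorbed into $\boldsymbol{H}$, this becomes
\[
\III - 2\boldsymbol{H}\, \II + K\, g \;=\; 0
\]
for some scalar function $K$ on $S$. Pairing with $(\partial_z, \partial_z)$ kills the $g$-term since $g(\partial_z, \partial_z) = 0$, yielding $\psi = 2\boldsymbol{H}\, \varphi$. If $\boldsymbol{H}$ is holomorphic, part (a) shows $\varphi$ is holomorphic, so $\psi$ is the product of two holomorphic objects on $S$ and is itself a holomorphic quadratic differential.

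\textbf{Main obstacle.} The hardest step will be pinning down the precise Codazzi--Mainardi identity: in Riemann-Cartan geometry $\II$ is no longer symmetric, so each symmetrisation must be tracked carefully in order that the torsion terms combine exactly into $J W J T(X,Y)$, and the emergence of the factor $J W J$ via the two-dimensional Cayley--Hamilton identity is where the Riemannian proof template must be adjusted most substantially. Part (b) is then essentially formal once the right Cayley--Hamilton-type identity is in hand.
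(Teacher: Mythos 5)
Your overall route coincides with the paper's: Proposition~\ref{P:Hopf-differential} is exactly the complexified Codazzi--Mainardi computation in an isothermal chart, and Proposition~\ref{P:TFF} is the identity \( \psi = \boldsymbol{H} \varphi \). The problem is that both identities you build on are stated incorrectly, and the errors sit precisely where the Riemann--Cartan setting departs from the Riemannian one. Differentiating the Gauss formula as you describe yields
\begin{align*}
{\left( \del_X \II \right)}(Y,Z) - {\left( \del_Y \II \right)}(X,Z) + \II {\left( T(X,Y), Z \right)} & = \tilde{R}(X,Y,Z,N) = - {\big\langle \tilde{R}(X,Y) N, Z \big\rangle},
\end{align*}
so the torsion term enters with a \emph{plus} sign and the right-hand side is \( - \langle \tilde{R}(X,Y) N, Z \rangle \); your version differs from this by \( 2 \II(T(X,Y),Z) \) and a sign on the curvature term, and is not an identity. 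More seriously, your mechanism for producing \( JWJ \) is the formula \( JWJ = W - \tr(W) \, \mathrm{id} \), which holds only for symmetric \( W \): in general \( JWJ = W^{\mathsf{T}} - \tr(W) \, \mathrm{id} \), and in Riemann--Cartan geometry \( W \) is \emph{not} symmetric --- its antisymmetric part is governed by \( {\star} \tau \), the imaginary part of \( \boldsymbol{H} \) --- so the identity fails exactly in the situation the theorem addresses. In the paper the \( J \)'s arise instead from the complex pairing: evaluating on \( (\partial_{\bar z}, \partial_z, \partial_z) \), the factor \( \boldsymbol{i} \) in \( L(\partial_{\bar z},\partial_z) = - \frac{\boldsymbol{i}}{2} L(\partial_x,\partial_y) \) is traded for a \( J \) via \( \langle V, \boldsymbol{i} \, \partial_z \rangle = \langle V, J \partial_z \rangle = - \langle J V, \partial_z \rangle \), and \( \II(J T(\partial_x,\partial_y), \partial_z) = \langle W J T, \partial_z \rangle = - \langle J (JWJ) T, \partial_z \rangle \) is what packages the tangential torsion into \( JWJ \, T \). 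Until you verify that the obstruction produced by the \emph{correct} Codazzi identity on \( (\partial_{\bar z}, \partial_z, \partial_z) \) is a nonzero multiple of \( \langle L(\partial_{\bar z},\partial_z), \partial_z \rangle \), part~(a) is not established.

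On the constants and part~(b): one has \( \II(\partial_z, \partial_{\bar z}) = \frac{\lambda^2}{4} \boldsymbol{H} \), not \( \frac{\lambda^2}{2} \boldsymbol{H} \), and the correct relation is \( \psi = \boldsymbol{H} \varphi \), not \( 2 \boldsymbol{H} \varphi \) (recall the paper's convention \( H = \tr W \)). These slips do not affect the logic, but your derivation of \( \psi \) from a ``complexified Cayley--Hamilton'' identity \( \III - 2 \boldsymbol{H} \II + K g = 0 \) cannot be right as stated, since \( \boldsymbol{H} \) is complex while \( \III \), \( \II \), and \( g \) are real tensors; such an identity could at best hold on the \( (2,0) \)-part, which is what needs proving. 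The clean route (the paper's) is the one-line computation \( \III(\partial_z,\partial_z) = \langle \del_{\partial_z} N, \del_{\partial_z} N \rangle \) using \( \del_{\partial_z} N = - \frac{2}{\lambda^2} \II(\partial_z,\partial_z) \partial_{\bar z} - \frac{1}{2} \boldsymbol{H} \partial_z \) together with \( \langle \partial_z, \partial_z \rangle = \langle \partial_{\bar z}, \partial_{\bar z} \rangle = 0 \). Your plan is salvageable, but as written the central identity of part~(a) is false and the identification of the obstruction with \( L \) has not actually been carried out.
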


\noindent The part~(a) of Theorem~\ref{M:B} generalizes the classical characterization by Hopf that a surface has constant mean curvature exactly when the Hopf differential becomes a holomorphic quadratic differential (Example~\ref{Ex:Einstein}).

In Subsection~\ref{SS:minimal}, we discuss minimal surfaces and totally umbilic surfaces in Riemann-Cartan geometry (Definition~\ref{D:minimal}). First of all, the following explains what the quadratic differentials \( \varphi \) and \( \psi \) measure.

\begin{pmain}[Propositions~\ref{P:umbilic} and~\ref{P:TFF-minimal}] \label{M:C}
Let \( S \) be an oriented smooth surface embedded in an oriented Riemann-Cartan\/ \( 3 \)-manifold. Then the following two hold.
\begin{enumerate}
\item \( \varphi \) vanishes everywhere iff \( S \) is totally umbilic.
\item If \( S \) is connected and nowhere geodesic, then \( \psi \) vanishes everywhere iff \( S \) is minimal (i.e., \( \boldsymbol{H} = 0 \) everywhere) or totally umbilic.
\end{enumerate}
\end{pmain}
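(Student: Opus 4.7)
The plan is to fix local isothermal coordinates $z = x + \boldsymbol{i} y$ with $g = \lambda^2 (dx^2 + dy^2)$ on $S$, and to translate both claims into algebraic statements about the components $h_{ij} = \langle \II(\partial_i, \partial_j), N\rangle$ of the second fundamental form. Splitting $h$ into symmetric and antisymmetric parts, the antisymmetric part vanishes on the diagonal, the trace of $h^{\mathrm{sym}}$ encodes $H$, and the single independent entry of $h^{\mathrm{asym}}$ encodes ${\star}\tau$, so that $\boldsymbol{H}$ is determined by these components.

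For part~(a), I would expand
\begin{align*}
\II(\partial_z, \partial_z)\cdot N = \tfrac{1}{4}\bigl[h_{11} - h_{22} - \boldsymbol{i}(h_{12} + h_{21})\bigr]
\end{align*}
and notice, using $h^{\mathrm{asym}}_{ii}=0$ and $h_{12} + h_{21} = 2 h^{\mathrm{sym}}_{12}$, that $\varphi = 0$ in the chart is equivalent to $h^{\mathrm{sym}}_{11} = h^{\mathrm{sym}}_{22}$ and $h^{\mathrm{sym}}_{12} = 0$. Since $g_{ij}$ is scalar in isothermal coordinates, this matches the totally umbilic condition $h^{\mathrm{sym}}_{ij} = H g_{ij}$ exactly. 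The condition is pointwise, so it globalizes.

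For part~(b), the plan is to establish the pointwise algebraic identity $\psi = 2\boldsymbol{H}\,\varphi$ and then argue by a connectedness dichotomy. The identity I would verify by direct expansion in isothermal coordinates: with $W$ written as a $2\times 2$ matrix $\bigl(\begin{smallmatrix} a & c \\ b & d\end{smallmatrix}\bigr)$ one has $2\boldsymbol{H} = (a+d) + \boldsymbol{i}(b-c)$, and both $\varphi$ and $\psi = g(W\partial_z, W\partial_z)$ become explicit polynomials in $a,b,c,d$ whose ratio is $2\boldsymbol{H}$; a more conceptual route is to apply Cayley--Hamilton to the non-self-adjoint map $W$ in two dimensions and rewrite $W^{*}W$ in terms of $W$, $W^{*}$ and scalar invariants. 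Assuming the identity, $\psi \equiv 0$ forces the closed sets $A = \{\boldsymbol{H}=0\}$ and $B = \{\varphi=0\}$ to cover $S$. Any $p\in A\cap B$ would satisfy $H(p)=0$, $\tau(p)=0$, and $h^{\mathrm{sym}}_p = H(p)\,g = 0$, so $\II_p = 0$ and $p$ would be totally geodesic; the hypothesis rules this out, so $A\cap B = \emptyset$, and connectedness forces $A = S$ (minimal) or $B = S$ (totally umbilic by~(a)).

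The main obstacle is the identity $\psi = 2\boldsymbol{H}\varphi$, which is where the torsion-induced antisymmetry $(b - c)$ of $W$ -- absent in the Riemannian case -- must combine with the conformal structure on $S$ to upgrade the classical relation $\III = 2H\II - Kg$ to its complex-valued Riemann--Cartan analogue.
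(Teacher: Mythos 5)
Your proposal is correct and follows essentially the same route as the paper: part~(a) is the isothermal-coordinate expansion of \( \II(\partial_z,\partial_z) \) identifying \( \varphi=0 \) with the symmetric part of the Weingarten map being scalar (the antisymmetric part being forced by \( \tau \)), and part~(b) is the factorization \( \psi = \boldsymbol{H}\varphi \) followed by the same closed-cover connectedness dichotomy, with the nowhere-geodesic hypothesis ruling out \( \{\boldsymbol{H}=0\}\cap\{\varphi=0\} \). The only discrepancy is normalization: since the paper sets \( H=\tr W \) and \( \star\tau = W^2_1 - W^1_2 \), one gets \( \boldsymbol{H}=(a+d)+\boldsymbol{i}(b-c) \) and hence \( \psi=\boldsymbol{H}\varphi \) rather than \( 2\boldsymbol{H}\varphi \) (the paper derives this from \( \del_{\partial/\partial z}N = -\tfrac{2}{\lambda^2}\II(\partial_z,\partial_z)\partial_{\bar z}-\tfrac12\boldsymbol{H}\partial_z \) instead of your matrix expansion), which is immaterial to the vanishing argument.
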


\noindent Via this observation, we capture the fact that the conformality of the Gauss map is deeply related to the minimality of surface as follows.

\begin{pmain}[Corollary~\ref{C:main}] \label{M:D}
Let \( S \) be an oriented smooth surface embedded in a Weitzenb{\"o}ck\/ \( 3 \)-manifold \( M \) with global smooth frame \( s \) (Definition~\ref{D:Weitzenbock-affine}). Then the following two hold.
\begin{enumerate}
\item (Local version) The Gauss map \( n^s \colon S \to \mathbb{S}^2 \) is conformal at a point in \( S \) iff\/ \( \II^s \ne 0 \) at the point and either \( \boldsymbol{H}^s = 0 \) or \( S \) is umbilic at the point.
\item (Global version) If \( S \) is connected, then the Gauss map \( n^s \colon S \to \mathbb{S}^2 \) is conformal iff \( S \) is nowhere geodesic and either it is minimal or totally umbilic.
\end{enumerate}
\end{pmain}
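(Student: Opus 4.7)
The plan is to reduce conformality of the Gauss map $n^s$ at a point $p$ to two independent pointwise conditions---vanishing of the quadratic differential $\psi$ at $p$ and non-triviality of the second fundamental form $\II^s$ at $p$---and then invoke the pointwise content of Proposition~\ref{P:TFF-minimal} for the first condition.

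The key identifications in the Weitzenb{\"o}ck setting, valid because $\nabla^s$ is flat with $\nabla^s$-parallel frame $s$, are the following: writing $W^s X := -\nabla^s_X N$ (which is automatically tangent to $S$ by metric compatibility and $\langle N, N\rangle \equiv 1$), one has
\[
\III^s(X,Y) = \langle W^s X, W^s Y\rangle = (n^s)^{\ast} g_{\mathbb{S}^2}(X,Y), \qquad \II^s(X,Y) = \langle W^s X, Y\rangle.
\]
In local isothermal coordinates $z$ on $S$, conformality of $n^s$ at $p$ with positive conformal factor is then equivalent to the conjunction of (i) $\psi_p = \III^s(\partial/\partial z, \partial/\partial z)|_p\,(\mathrm{d} z^2)_p = 0$, i.e.\ vanishing of the $(2,0)$-part of $\III^s_p$, and (ii) positive-definiteness of $\III^s_p$ on $T_pS$.

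From the relation $\III^s_p = \langle W^s\,\cdot\,, W^s\,\cdot\,\rangle$, one gets immediately $\III^s_p = 0 \iff W^s_p = 0 \iff \II^s_p = 0$; hence under (i)---which forces $\III^s_p$ to be a non-negative multiple of the induced metric on $S$---condition (ii) becomes exactly $\II^s_p \ne 0$. Combining this with the pointwise content of Proposition~\ref{P:TFF-minimal}---namely, $\psi_p = 0 \iff \boldsymbol{H}^s_p = 0$ or $S$ is umbilic at $p$---yields the local statement (a). For the global statement (b), one applies (a) at every point of the connected surface $S$: conformality of $n^s$ everywhere gives $\II^s$ nowhere vanishing (i.e.\ $S$ is nowhere geodesic) and $\psi \equiv 0$, so Theorem~\ref{M:C}(b) promotes the pointwise dichotomy ``$\boldsymbol{H}^s = 0$ or umbilic at $p$'' to the clean global one ``minimal or totally umbilic,'' and the converse is immediate by reversing the same chain.

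The main obstacle I anticipate is the pointwise refinement of Proposition~\ref{P:TFF-minimal}, whose stated global form ``$\psi \equiv 0 \iff$ minimal or totally umbilic'' requires connectedness and the ``nowhere geodesic'' hypothesis. Extracting a genuinely pointwise equivalence amounts to isolating the tensorial core of its proof: the algebraic identity that detects $\psi$-vanishing at a point is pointwise in nature, and the global hypotheses are used only to prevent a connected surface from switching between the minimal and the umbilic regimes across the geodesic locus. If such a pointwise version is not recorded explicitly in Section~\ref{S:surfaces}, a brief complement-open-set argument applied separately on the interiors of the minimal and the umbilic loci (both relatively closed in $S$) should suffice to promote the pointwise statement.
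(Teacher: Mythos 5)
Your proposal is correct and follows essentially the same route as the paper: the paper's Proposition~\ref{P:TFF-conformal} is exactly your identification \( \III^s = (n^s)^* g_{\mathbb{S}^2} \) reducing conformality at a point to ``\( \psi = 0 \) and \( \II^s \ne 0 \)'', and the pointwise dichotomy you need is already recorded as Proposition~\ref{P:TFF-minimal}(a) (via \( \psi = \boldsymbol{H}\varphi \) and Proposition~\ref{P:umbilic}), so the complement-open-set argument you anticipate is only needed for the global step, where the paper carries it out in the proof of Proposition~\ref{P:TFF-minimal}(b). The obstacle flagged in your last paragraph therefore does not arise.
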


\noindent There is a more general version of Theorem~\ref{M:D} for flat Riemann-Cartan geometry in Theorem~\ref{T:main}. This result generalizes the classical fact in minimal surface theory that the conformality of the Gauss map characterizes minimal surfaces in \( \mathbb{R}^3 \) (Corollary~\ref{C:classical}). There have been several notable attempts to extend this classical fact in different contexts as well, e.g.,~\cite{MR0955072},~\cite{MR1873007}, and~\cite{MR3311902}.

\subsubsection*{Main contribution}

The main contribution of this work is to introduce the complex-valued quantity \( \boldsymbol{H} = H + \boldsymbol{i} {\star} \tau \) for surfaces in Riemann-Cartan geometry, which generalizes mean curvature and relevant theorems in minimal surface theory. This geometric quantity arises naturally in the process of extending the framework of submanifolds in Riemannian geometry to Riemann-Cartan geometry.

An important aspect of mean curvature in Riemannian geometry is that it naturally emerges from the variation of the area functional. A limitation of the present paper is that it does not provide such a variational interpretation---namely, whether the quantity \( \boldsymbol{H} \) arises from the variation of a certain geometric invariant---although the term ``minimal'' is used throughout the paper as a generalization.

However, the significance of the work lies in presenting a broader or distinct perspective that generalizes the classical notion of mean curvature. The quantity \( \boldsymbol{H} \) is defined in Riemann-Cartan geometry, which encompasses both Riemannian and Weitzenb{\"o}ck geometries (Figure~\ref{FIG:geometry}). For example, Euclidean geometry admits multiple characterizations of the minimality. On the one hand, Riemannian geometry can generalize the characterization based on the extremality of the area functional, while it does not easily generalize theorems involving the Gauss map. On the other hand, Weitzenb{\"o}ck geometry can generalize the characterization based on the conformality of the Gauss map, as in Theorem~\ref{M:D}\@.

\begin{figure}
\begin{tikzpicture}
\node[align=center] at (0,4) (RC) {Riemann-Cartan geometry \\ (\( R \ne 0 \), \( T \ne 0 \)) \\[.15cm] Theorems~\ref{M:B},~\ref{M:C},~\( \ldots \)};
\node[align=center] at (-3,2) (R) {Riemannian geometry \\ (\( R \ne 0 \), \( T = 0 \)) \\[.15cm] Minimal surface theory};
\node[align=center] at (3,2) (W) {Weitzenb{\"o}ck geometry \\ (\( R = 0 \), \( T \ne 0 \)) \\[.15cm] Theorems~\ref{M:A},~\ref{M:D},~\( \ldots \)};
\node[align=center] at (0,0) (E) {Euclidean geometry \\ (\( R = 0 \), \( T = 0 \)) \\[.15cm] Minimal surface theory};
\draw (RC) -- (R);
\draw (RC) -- (W);
\draw (R) -- (E);
\draw (W) -- (E);
\draw (-2.5,3.77) -- (2.5,3.77) -- (2.5,4.77) -- (-2.5,4.77) -- cycle;
\draw (-5.5,1.77) -- (-.5,1.77) -- (-.5,2.77) -- (-5.5,2.77) -- cycle;
\draw (.5,1.77) -- (5.5,1.77) -- (5.5,2.77) -- (.5,2.77) -- cycle;
\draw (-2.5,-.23) -- (2.5,-.23) -- (2.5,.77) -- (-2.5,.77) -- cycle;
\end{tikzpicture}
\caption{Four geometries. Riemannian and Weitzenb{\"o}ck geometries are two distinct ways to generalize Euclidean geometry.}
\label{FIG:geometry}
\end{figure}

Meanwhile, the quantity \( \boldsymbol{H} \) was first discovered in the context of the renormalization of the Chern-Simons invariant for noncompact hyperbolic \( 3 \)-manifolds having infinite volume. The leading coefficient of the asymptotics arising from this procedure corresponds to the integral of \( \boldsymbol{H} \). Therefore, studying the quantity \( \boldsymbol{H} \) may also contribute to a better understanding of the renormalized Chern-Simons invariant for such hyperbolic \( 3 \)-manifolds. For details, see~\cite{LeeCS}.

\subsubsection*{Notations and conventions}

This paper only considers Riemannian metrics, not pseudo-Riemannian metrics. All Riemann-Cartan manifolds in this paper are assumed to have signature \( (n,0) \). All orientations obey the right-hand rule. A smooth hypersurface \( S \) embedded in an oriented Riemannian manifold \( M \) is said to be oriented by a unit normal vector field \( N \) along \( S \) iff a frame \( {\left( E_1, \dotsc, E_{n-1} \right)} \) for \( S \) is oriented exactly when the frame \( {\left( N, E_1, \dotsc, E_{n-1} \right)} \) for \( M \) is oriented.

We will often make use of the Einstein summation convention to achieve brevity: An index variable appearing twice in a single term (once in an upper position and once in a lower position) assumes the summation of the term over the index. For example, \( A^i B_i = \sum_i A^i B_i \). The ``cyclic'' summation is denoted as follows.
\begin{align}
\sum_{\mathrm{cyc}} A_{ijk} & = \sum_{\substack{i, j, k \\ \mathrm{cyc}}} A_{ijk} = A_{123} + A_{231} + A_{312}.
\end{align}
The \( (i,j) \)-entry of a matrix (or a matrix-valued object) \( A \) is denoted by \( A^i_j \).

\section{Preliminaries} \label{S:preliminaries}

This section provides foundational concepts that are necessary for the present paper. We give a brief introduction to Riemann-Cartan geometry and Weitzenb{\"o}ck geometry, including essential definitions, notations, and properties which will be used throughout this paper. It should be noted that this paper only considers Riemann-Cartan manifolds of signature \( (n,0) \) (i.e., with Riemannian metric), whereas the literature on Riemann-Cartan geometry typically allows pseudo-Riemannian metrics for Riemann-Cartan manifolds.

\subsection{Riemann-Cartan geometry}

We start with providing several basic definitions regarding connection on a smooth manifold, which also introduce the notations and conventions that the present paper follows. They can be found in differential geometry literature, e.g.,~\cite{MR3887684} and~\cite{MR3585539}.

\begin{pdef} \label{D:connection}
A \emph{(Koszul) connection} on a smooth real vector bundle \( E \surjto M \) is an \( \mathbb{R} \)-bilinear map \( \del \colon \Gamma(TM) \times \Gamma(E) \to \Gamma(E) \) denoted by \( {\left( X, \xi \right)} \mapsto \del_X \xi \) such that
\begin{align}
\del_{f X} \xi & = f \del_X \xi \quad \text{and} \quad \del_X {\left( f \xi \right)} = X(f) \xi + f \del_X \xi
\end{align}
for all \( f \in \mathcal{C}^{\infty} {\left( M, \mathbb{R} \right)} \), \( X \in \Gamma(TM) \), and \( \xi \in \Gamma(E) \). A \emph{(affine) connection} on a smooth manifold \( M \) means a connection on the tangent bundle \( TM \surjto M \).
\end{pdef}

\noindent In Definition~\ref{D:connection}, it is an elementary fact that the value of \( \del_X \xi \) at \( x \in M \) depends only on the value of \( X \) at \( x \) and the local behavior of \( \xi \) near \( x \).

\begin{pdef}
Let \( \del \) be a connection on a smooth manifold \( M \).
\begin{enumerate}
\item The \emph{torsion} of \( \del \) is the smooth \( (1,2) \)-tensor field \( T \) on \( M \) defined by
\begin{align}
T(X,Y) & = \del_X Y - \del_Y X - [X, Y],
\end{align}
where \( X, Y \in \Gamma(TM) \). \( \del \) is said to be \emph{torsion-free} iff \( T = 0 \) everywhere.
\item The \emph{curvature} of \( \del \) is the smooth \( (1,3) \)-tensor field \( R \) on \( M \) defined by
\begin{align}
R(X,Y) Z & = \del_X \del_Y Z - \del_Y \del_X Z - \del_{[X, Y]} Z,
\end{align}
where \( X, Y, Z \in \Gamma(TM) \). \( \del \) is said to be \emph{flat} iff \( R = 0 \) everywhere.
\item The \emph{Ricci curvature} of \( \del \) is the smooth \( (0,2) \)-tensor field \( \Ric \) on \( M \) given by the trace of the linear map \( Z \mapsto R(Z,X) Y \), i.e.,
\begin{align}
\Ric(X,Y) & = \tr \! {\left( Z \mapsto R(Z,X) Y \right)},
\end{align}
where \( X, Y, Z \in \Gamma(TM) \). \( \del \) is said to be \emph{Ricci-flat} iff \( \Ric = 0 \) everywhere.
\end{enumerate}
\end{pdef}

\noindent The Riemannian metric on a Riemannian manifold will be denoted by \( {\left\langle {}\cdot{}, {}\cdot{} \right\rangle} \).

\begin{pdef}
A connection \( \del \) on a Riemannian manifold \( M \) is said to be \emph{metric-compatible} iff \( X {\left\langle Y, Z \right\rangle} = {\left\langle \del_X Y, Z \right\rangle} + {\left\langle Y, \del_X Z \right\rangle} \) for all \( X, Y, Z \in \Gamma(TM) \).
\end{pdef}

It is a fundamental theorem in Riemannian geometry that every Riemannian manifold has a unique torsion-free metric-compatible connection, called the \emph{Levi-Civita connection}. A \emph{Riemann-Cartan manifold} is a Riemannian manifold endowed with a metric-compatible connection, which is not necessarily torsion-free. It also has the Levi-Civita connection as a Riemannian manifold, which will be denoted by \( \bar{\del} \). Quantities related to the Levi-Civita connection will be denoted by a bar.

\begin{pdef}
Let \( M \) be a Riemann-Cartan manifold.
\begin{enumerate}
\item The \emph{Riemann curvature} of \( M \) is the smooth \( (0,4) \)-tensor field \( R \) on \( M \) defined by \( R {\left( W, X, Y, Z \right)} = {\left\langle R {\left( W, X \right)} Y, Z \right\rangle} \), where \( W, X, Y, Z \in \Gamma(TM) \). This is an abuse of notation.
\item The \emph{scalar curvature} of \( M \) is the smooth scalar field \( \Scal \) on \( M \) defined by the trace of the Ricci curvature of \( M \) with respect to the metric.
\end{enumerate}
\end{pdef}

\noindent The Riemann curvature of a Riemann-Cartan manifold satisfies only some of the symmetries that hold in the case of a Riemannian manifold as follows, due to the presence of nontrivial torsion.

\begin{pprop}
The Riemann curvature \( R \) of a Riemann-Cartan manifold \( M \) satisfies \( R {\left( W, X, Y, Z \right)} = - R {\left( X, W, Y, Z \right)} \) and \( R {\left( W, X, Y, Z \right)} = - R {\left( W, X, Z, Y \right)} \) for all \( W, X, Y, Z \in \Gamma(TM) \).
\end{pprop}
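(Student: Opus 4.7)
The plan is to establish the two antisymmetries independently; neither uses torsion-freeness, so the standard Riemannian arguments carry over without modification.

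The first antisymmetry $R(W,X,Y,Z) = -R(X,W,Y,Z)$ is essentially formal. In the definition $R(W,X)Y = \del_W \del_X Y - \del_X \del_W Y - \del_{[W,X]} Y$, swapping $W$ and $X$ negates the first difference, and via $[W,X] = -[X,W]$ it also negates the bracket term. Pairing with $Z$ through the metric then gives the claim, and no property of $\del$ beyond the definition of $R$ is invoked.

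For the second antisymmetry $R(W,X,Y,Z) = -R(W,X,Z,Y)$, the plan is to show that the endomorphism $R(W,X)$ is skew-adjoint with respect to $\langle \cdot, \cdot \rangle$. The standard approach is to apply metric compatibility twice to expand each of $WX\langle Y, Z\rangle$ and $XW\langle Y, Z\rangle$ into a sum of iterated covariant-derivative terms; subtracting and using $[W,X]\langle Y, Z\rangle = \langle \del_{[W,X]} Y, Z\rangle + \langle Y, \del_{[W,X]} Z\rangle$, the symmetric cross-terms of the form $\langle \del_W Y, \del_X Z\rangle + \langle \del_X Y, \del_W Z\rangle$ cancel, and the defining expression for $R(W,X)$ reassembles on each slot of the inner product. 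This yields $\langle R(W,X)Y, Z\rangle + \langle Y, R(W,X)Z\rangle = 0$, which is the asserted antisymmetry.

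There is no real obstacle here beyond careful bookkeeping, but it is worth emphasizing, by contrast, where torsion actually matters: the first Bianchi identity and the pair-exchange symmetry $R(W,X,Y,Z) = R(Y,Z,W,X)$ generally fail in Riemann-Cartan geometry, because their usual proofs invoke $T = 0$ to symmetrize $\del_X Y$ in $X$ and $Y$ modulo $[X,Y]$. The two antisymmetries stated in this proposition are insensitive to torsion precisely because the first uses only the definition of $R$ (antisymmetry in the differentiation directions) and the second uses only metric compatibility (skew-adjointness of $R(W,X)$).
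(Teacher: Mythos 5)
Your proposal is correct and takes essentially the same route as the paper: the first antisymmetry is read off from the definition of \( R \), and the second is obtained by applying metric compatibility twice to second derivatives of the inner product so that the cross-terms cancel. The only cosmetic difference is that the paper first proves \( R(W,X,Y,Y) = 0 \) and then polarizes via \( R(W,X,Y+Z,Y+Z) = 0 \), whereas you expand \( \langle R(W,X)Y, Z\rangle + \langle Y, R(W,X)Z\rangle \) directly; the underlying computation is identical.
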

\begin{proof}
The first equality is obvious by definition. For the second equality, first observe that \( R {\left( W, X, Y, Y \right)} = 0 \) for all \( W, X, Y \in \Gamma(TM) \), since
\begin{align*}
W X {\left\langle Y, Y \right\rangle} & = W {\left( 2 {\left\langle \del_X Y, Y \right\rangle} \right)} = 2 {\left\langle \del_W \del_X Y, Y \right\rangle} + 2 {\left\langle \del_X Y, \del_W Y \right\rangle}, \\
X W {\left\langle Y, Y \right\rangle} & = X {\left( 2 {\left\langle \del_W Y, Y \right\rangle} \right)} = 2 {\left\langle \del_X \del_W Y, Y \right\rangle} + 2 {\left\langle \del_W Y, \del_X Y \right\rangle}, \\
[W, X] {\left\langle Y, Y \right\rangle} & = 2 {\left\langle \del_{[W, X]} Y, Y \right\rangle}
\end{align*}
for all \( W, X, Y \in \Gamma(TM) \). Then the second equality follows from the expansion of the equality \( R {\left( W, X, Y+Z, Y+Z \right)} = 0 \), where \( W, X, Y, Z \in \Gamma(TM) \).
\end{proof}

There is another useful characterization of Riemann-Cartan manifold, which uses the notion of frame bundle and principal bundle. We give a brief account.

\begin{pdef}
Let \( G \) be a Lie group. A \emph{smooth principal \( G \)-bundle} is a smooth fiber bundle \( \pi \colon P \surjto M \) endowed with a smooth free right group action of \( G \) on \( P \) such that every point in \( M \) has an \emph{equivariant local trivialization}, i.e., a local trivialization \( \varphi \) satisfying \( \varphi(p \cdot g) = \varphi(p) \cdot g \) for all \( p \) in the fibers and \( g \in G \). We define a map \( \iota_p \colon G \to {\left. P \right|}_{\pi(p)} \) by \( \iota_p(g) = p \cdot g \) for each \( p \in P \), and a map \( R_g \colon P \to P \) by \( R_g(p) = p \cdot g \) for each \( g \in G \), which are diffeomorphisms.
\end{pdef}

\begin{pdef}
Let \( G \) be a Lie group with the Lie algebra \( \mathfrak{g} \) and the Maurer-Cartan\/ \( 1 \)-form \( \mu \). A \emph{(principal) connection} on a smooth principal \( G \)-bundle \( \pi \colon P \surjto M \) is \( \omega \in \Omega^1 {\left( P, \mathfrak{g} \right)} \) such that \( \iota_p^* \omega = \mu \) for all \( p \in P \) and \( R_g^* \omega = \Ad_{g^{-1}} \omega \) for all \( g \in G \). Its \emph{curvature} is defined by \( \Omega = \mathrm{d} \omega + \frac{1}{2} {\left[ \omega \wedge \omega \right]} \in \Omega^2 {\left( P, \mathfrak{g} \right)} \).
\end{pdef}

\begin{plem} \label{L:gauge-transformation-connection}
Let \( G \) be a Lie group. Let \( \pi \colon P \surjto M \) be a smooth principal \( G \)-bundle with a connection \( \omega \). Let\/ \( \Omega \) be the curvature of \( \omega \). If \( \varphi \colon P \isoto P \) is a gauge transformation with the associated map \( g_{\varphi} \colon P \to G \) given by \( \varphi(p) = p \cdot g_{\varphi}(p) \) for all \( p \in P \), then
\begin{align}
\varphi^* \omega & = \Ad_{g_{\varphi}^{-1}} \omega + g_{\varphi}^* \mu \quad \text{and} \quad \varphi^* \Omega = \Ad_{g_{\varphi}^{-1}} \Omega,
\end{align}
where \( \mu \) is the Maurer-Cartan\/ \( 1 \)-form on \( G \).
\end{plem}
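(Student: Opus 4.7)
The plan is to establish the connection formula by a pointwise tangent-vector calculation on \( P \), and then to deduce the curvature formula by exploiting that \( \Omega \) vanishes on pairs containing a vertical vector. For the connection formula, I would fix \( p \in P \) and \( X \in T_p P \), represent \( X \) by a smooth curve \( \gamma(t) \) through \( p \), and differentiate \( \varphi(\gamma(t)) = \gamma(t) \cdot g_{\varphi}(\gamma(t)) \) via the product rule for the right action \( P \times G \to P \). This yields a decomposition
\begin{align*}
\mathrm{d} \varphi_p(X) & = {\left( \mathrm{d} R_{g_{\varphi}(p)} \right)}_p(X) + \zeta^*_{\varphi(p)},
\end{align*}
where \( \zeta := {\left( g_{\varphi}^* \mu \right)}_p(X) \in \mathfrak{g} \) and \( \zeta^* \) denotes the fundamental vector field on \( P \) associated with \( \zeta \); the identification of the vertical part as \( \zeta^*_{\varphi(p)} \) comes from the relation \( \iota_p \circ L_{g_{\varphi}(p)} = \iota_{\varphi(p)} \). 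Applying \( \omega \), the first piece yields \( \Ad_{g_{\varphi}(p)^{-1}} \omega_p(X) \) by the defining \( \Ad \)-equivariance \( R_g^* \omega = \Ad_{g^{-1}} \omega \), while the second yields \( \omega(\zeta^*_{\varphi(p)}) = \zeta = {\left( g_{\varphi}^* \mu \right)}_p(X) \) because \( \iota_q^* \omega = \mu \) forces \( \omega(\zeta^*) = \zeta \). Summing gives the first formula.

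For the curvature formula, I would first verify that \( \Omega(\zeta^*, V) = 0 \) for every fundamental vector field \( \zeta^* \) and every \( V \). Since the flow of \( \zeta^* \) is \( R_{\exp(t \zeta)} \), the \( \Ad \)-equivariance of \( \omega \) gives \( \mathcal{L}_{\zeta^*} \omega = - \ad_{\zeta} \omega \), and combining this with constancy of \( \omega(\zeta^*) = \zeta \) via Cartan's formula shows \( \mathrm{d} \omega(\zeta^*, V) = - [\zeta, \omega(V)] \), which cancels the bracket term \( [\omega(\zeta^*), \omega(V)] \) in the structure equation. Substituting the tangent-vector decomposition above into \( {\left( \varphi^* \Omega \right)}_p (X, Y) = \Omega_{\varphi(p)} {\left( \mathrm{d} \varphi_p(X), \mathrm{d} \varphi_p(Y) \right)} \), the vertical cross terms and the vertical--vertical term all vanish by horizontality, leaving only \( \Omega_{\varphi(p)} {\left( {(\mathrm{d} R_{g_{\varphi}(p)})}_p(X), {(\mathrm{d} R_{g_{\varphi}(p)})}_p(Y) \right)} = {\left( R_{g_{\varphi}(p)}^* \Omega \right)}_p(X, Y) \). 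The \( \Ad \)-equivariance \( R_g^* \Omega = \Ad_{g^{-1}} \Omega \) for any fixed \( g \in G \), obtained by pulling back the structure equation through \( R_g \) and using that \( \Ad_{g^{-1}} \) is a Lie algebra automorphism, then delivers the desired \( \Ad_{g_{\varphi}(p)^{-1}} \Omega_p(X, Y) \).

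The main obstacle is bookkeeping rather than any deep idea: because \( g_{\varphi} \) varies over \( P \), every invocation of \( R_{g_{\varphi}} \) must be accompanied by the vertical fundamental-vector-field correction from the product rule, and all signs and conjugations must be tracked against the conventions \( \iota_p^* \omega = \mu \) and \( R_g^* \omega = \Ad_{g^{-1}} \omega \). Once the decomposition of \( \mathrm{d} \varphi \) is in hand, both formulas drop out with essentially no further computation.
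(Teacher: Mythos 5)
Your proof is correct. The paper states this lemma without proof, treating it as a standard fact about principal connections, so there is no argument to compare against; your derivation is the standard one. The key decomposition \( \mathrm{d}\varphi_p(X) = (\mathrm{d}R_{g_{\varphi}(p)})_p(X) + \zeta^*_{\varphi(p)} \) with \( \zeta = (g_{\varphi}^*\mu)_p(X) \) is correctly justified via \( \iota_p \circ L_{g_{\varphi}(p)} = \iota_{\varphi(p)} \), the two connection axioms are applied to the two pieces exactly as needed, and the curvature formula follows correctly from the horizontality of \( \Omega \) (which your Cartan-formula computation establishes) together with \( R_g^*\Omega = \Ad_{g^{-1}}\Omega \).
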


Given a Riemannian \( n \)-manifold \( M \), its orthonormal frame bundle \( FM \surjto M \) is a smooth principal \( \mathrm{O}(n) \)-bundle, and there is a well-known one-to-one correspondence between two notions of connection as follows.
\begin{align}
\begin{aligned}
& {\left\{ \text{metric-compatible Koszul connections \( \del \) on \( TM \surjto M \)} \right\}} \\
& \qquad \longbijto {\left\{ \text{principal \( \mathrm{O}(n) \)-connections \( \omega \) on \( FM \surjto M \)} \right\}}.
\end{aligned} \label{E:connection}
\end{align}
The bridge is given by \emph{connection coefficients}: If a principal connection \( \omega \) is given, then the corresponding Koszul connection \( \del \) has the connection coefficients \( s^* \omega^i_j \) with respect to any local orthonormal smooth frame \( s \) for \( M \). Therefore, a Riemann-Cartan manifold can be understood as the choice of a principal connection on the orthonormal frame bundle over a Riemannian manifold. See Chapter~6 of~\cite{MR3585539} for further details. For torsion, we need to introduce one more notion.

\begin{pdef}
Let \( M \) be a Riemannian \( n \)-manifold. The \emph{solder form} of the orthonormal frame bundle \( \pi \colon FM \surjto M \) is \( \theta \in \Omega^1 {\left( FM, \mathbb{R}^n \right)} \) defined by
\begin{align}
{\left. \theta \right|}_{(x,f)} (X) & = f^{-1} {\left( \pi_* X \right)} \qquad ((x,f) \in FM, \ X \in T_{(x,f)} FM),
\end{align}
where \( f \) is understood as the map \( f \colon \mathbb{R}^n \isoto T_x M \). The \emph{torsion} of a connection \( \omega \in \Omega^1 {\left( FM, \mathfrak{o}(n) \right)} \) is defined by \( \Theta = \mathrm{d} \theta + \omega \wedge \theta \in \Omega^2 {\left( FM, \mathbb{R}^n \right)} \).
\end{pdef}

A Riemann-Cartan manifold has four fundamental forms: the solder form \( \theta \), the connection \( \omega \), the torsion \( \Theta \), and the curvature \( \Omega \). If \( s = {\left( E_1, \dotsc, E_n \right)} \) is a local orthonormal smooth frame, then they satisfy
\begin{align}
\begin{aligned}
X & = s^* \theta^i(X) E_i, & \quad T(X,Y) & = s^* \Theta^i(X,Y) E_i, \\
\del_X E_j & = s^* \omega^i_j(X) E_i, & \quad R(X,Y) E_j & = s^* \Omega^i_j(X,Y) E_i
\end{aligned}
\end{align}
for any vectors \( X \) and \( Y \) (at the same point), where the Einstein summation convention is assumed. Also, they satisfy
\begin{align}
\begin{aligned}
\mathrm{d} \theta + \omega \wedge \theta & = \Theta, & \quad \mathrm{d} \Theta + \omega \wedge \Theta & = \Omega \wedge \theta, \\
\mathrm{d} \omega + \omega \wedge \omega & = \Omega, & \quad \mathrm{d} \Omega + \omega \wedge \Omega & = \Omega \wedge \omega.
\end{aligned} \label{E:fundamental-equations}
\end{align}
These are four fundamental equations.

\subsection{Weitzenb{\"o}ck geometry}

A trivial example of Riemann-Cartan geometry is Riemannian geometry with the Levi-Civita connection, which has \emph{zero torsion} (and possibly nonzero curvature). Another important example of Riemann-Cartan geometry is Weitzenb{\"o}ck geometry, which has \emph{zero curvature} (and possibly nonzero torsion). A modern definition can be found, e.g., in Section~5.7 of~\cite{MR0615912}.

\begin{pdef} \label{D:Weitzenbock-affine}
A \emph{Weitzenb{\"o}ck manifold} is a parallelizable smooth manifold endowed with a global smooth frame. If \( M \) is a Weitzenb{\"o}ck \( n \)-manifold with global smooth frame \( s = {\left( E_1, \dotsc, E_n \right)} \), it automatically possesses the oriented Riemann-Cartan structure as follows.
\begin{enumerate}
\item The orientation on \( M \) is determined by \( s = {\left( E_1, \dotsc, E_n \right)} \).
\item The Riemannian metric on \( M \) is determined by \( {\left\langle E_i, E_j \right\rangle} = \delta_{ij} \).
\item The metric-compatible connection on \( M \), called the \emph{Weitzenb{\"o}ck connection}, is defined by
\begin{align}
\del^s_X Y & = \sum_{i = 1}^n X {\left( Y^i \right)} E_i,
\end{align}
where \( X, Y \in \Gamma(TM) \) and \( Y = Y^1 E_1 + \dotsb + Y^n E_n \).
\end{enumerate}
\end{pdef}

\noindent Note that the Weitzenb{\"o}ck structure on the Euclidean space \( \mathbb{R}^n \) given by the standard global smooth frame \( {\big( \frac{\partial}{\partial x^1}, \dotsc, \frac{\partial}{\partial x^n} \big)} \) and the Riemannian structure given by the standard inner product induce the same Riemann-Cartan structure. From this perspective, Riemannian geometry and Weitzenb{\"o}ck geometry are two distinct approaches to the generalization of Euclidean geometry.

Some basic properties of the Weitzenb{\"o}ck connection are shown in the following.

\begin{pprop} \label{P:Weitzenbock}
Let \( M \) be a Weitzenb{\"o}ck \( n \)-manifold with global smooth frame \( s = {\left( E_1, \dotsc, E_n \right)} \). Then the following three hold.
\begin{enumerate}
\item The connection coefficients of\/ \( \del^s \) with respect to \( s \) are all zero.
\item The torsion \( T^s \) of\/ \( \del^s \) is given by
\begin{align}
T^s(X,Y) & = - \sum_{i, j = 1}^n X^i Y^j {\left[ E_i, E_j \right]}, \quad \text{i.e.,} \quad T^s {\left( E_i, E_j \right)} = - {\left[ E_i, E_j \right]}
\end{align}
for all \( X, Y \in \Gamma(TM) \), where \( X = \sum_{i = 1}^n X^i E_i \) and \( Y = \sum_{j = 1}^n Y^j E_j \).
\item \( \del^s \) is flat.
\end{enumerate}
\end{pprop}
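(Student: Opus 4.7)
The plan is to verify the three claims in order; each follows almost directly from the defining formula \( \del^s_X Y = \sum_i X(Y^i) E_i \) in Definition~\ref{D:Weitzenbock-affine}, with (b) and (c) built on top of (a). The key structural point I would exploit throughout is that torsion and curvature are tensors, so pointwise identities on the global frame \( s \) will suffice to establish the global claims.

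First, I would establish (a) by applying the definition of \( \del^s \) to the frame vectors themselves. Since \( E_j = \sum_i \delta^i_j E_i \) has constant components in \( s \), the defining formula gives \( \del^s_X E_j = \sum_i X {\left( \delta^i_j \right)} E_i = 0 \) for every \( X \in \Gamma(TM) \). In particular \( \del^s_{E_k} E_j = 0 \) for all \( j, k \), so all connection coefficients of \( \del^s \) with respect to \( s \) vanish.

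For (b), I would evaluate the torsion on the frame directly: using (a) to kill both \( \del^s_{E_i} E_j \) and \( \del^s_{E_j} E_i \), the definition of torsion immediately yields \( T^s {\left( E_i, E_j \right)} = -[E_i, E_j] \). Since \( T^s \) is a \( (1,2) \)-tensor field, expanding \( X = \sum_i X^i E_i \) and \( Y = \sum_j Y^j E_j \) and invoking \( \mathcal{C}^{\infty}(M, \mathbb{R}) \)-bilinearity extends this to the claimed global formula. For (c), I would similarly compute \( R^s(X,Y) E_k \) for arbitrary \( X, Y \in \Gamma(TM) \): by (a) we have \( \del^s_Y E_k = 0 \), hence \( \del^s_X \del^s_Y E_k = 0 \), and the same for the swapped term; the third piece \( \del^s_{[X,Y]} E_k \) also vanishes by (a). Thus \( R^s(X,Y) E_k = 0 \), and \( \mathcal{C}^{\infty} \)-linearity of \( R^s \) in its third argument yields \( R^s = 0 \) everywhere.

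There is really no substantive obstacle, since each part is a one-line verification once (a) is in hand. The only minor thing to check along the way is that the formula in Definition~\ref{D:Weitzenbock-affine} truly defines a connection in the sense of Definition~\ref{D:connection} — that is, \( \mathcal{C}^{\infty} \)-linearity in \( X \) and the Leibniz rule in the second argument — but both are immediate from the product and sum rules applied to the component functions \( Y^i \).
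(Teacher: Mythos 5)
Your proposal is correct and follows essentially the same route as the paper: all three parts are read off from the defining formula \( \del^s_X Y = \sum_i X(Y^i) E_i \), starting from \( \del^s_X E_j = 0 \). The only cosmetic difference is that you evaluate the torsion and curvature on frame vectors and then invoke tensoriality, whereas the paper expands \( [X^i E_i, Y^j E_j] \) and \( XY(Z^i) - YX(Z^i) - [X,Y](Z^i) \) directly for arbitrary vector fields; both are one-line verifications.
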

\begin{proof}
For all \( X, Y, Z \in \Gamma(TM) \), we have \( \del^s_X E_j = X(1) E_j = 0 \),
\begin{align}
T^s(X,Y) & = X {\left( Y^i \right)} E_i - Y {\left( X^i \right)} E_i - {\left[ X^i E_i, Y^j E_j \right]} = - X^i Y^j {\left[ E_i, E_j \right]}, \\
R^s(X,Y) Z & = X Y {\left( Z^i \right)} E_i - Y X {\left( Z^i \right)} E_i - {\left[ X, Y \right]} {\left( Z^i \right)} E_i = 0,
\end{align}
where the Einstein summation convention is assumed.
\end{proof}

From the viewpoint of frame bundle and principal bundle, the Weitzenb{\"o}ck connection can be understood as the choice of a trivial connection via a global smooth section in the following sense.

\begin{pdef}
Let \( G \) be a Lie group. Let \( M \) be a smooth manifold. The \emph{trivial principal \( G \)-bundle} over \( M \) is defined by the projection \( M \times G \surjto M \) with the smooth right group action \( (x, g) \cdot h = (x, gh) \), where \( (x, g) \in M \times G \) and \( h \in G \). The \emph{trivial connection} on this is the pullback of the Maurer-Cartan \( 1 \)-form on \( G \) via the projection \( M \times G \surjto G \).
\end{pdef}

\begin{pdef} \label{D:Weitzenbock-principal}
Let \( G \) be a Lie group. Let \( \pi \colon P \surjto M \) be a smooth principal \( G \)-bundle. Suppose that it has a global smooth section \( \sigma \colon M \injto P \). Then it induces a global trivialization \( \varphi_{\sigma} \colon P \to M \times G \) defined by \( \varphi_{\sigma}^{-1}(x, g) = \sigma(x) \cdot g \).
\begin{align}
\begin{tikzcd}[ampersand replacement=\&]
P \ar[rr, "\varphi_{\sigma}"] \ar[rd, two heads, "\pi"'] \& \& M \times G \ar[ld, two heads] \\
\& M
\end{tikzcd}
\end{align}
The \emph{Weitzenb{\"o}ck connection} on \( \pi \colon P \surjto M \) induced by \( \sigma \) is the principal connection \( \omega^{\sigma} \in \Omega^1 {\left( P, \mathfrak{g} \right)} \) on \( \pi \colon P \surjto M \) defined by the pullback of the trivial connection on the trivial principal \( G \)-bundle \( M \times G \surjto M \) via \( \varphi_{\sigma} \colon P \to M \times G \).
\end{pdef}

\noindent Weitzenb{\"o}ck connections of Definition~\ref{D:Weitzenbock-principal} are flat (i.e., \( \Omega^{\sigma} = 0 \)) as well, by the Maurer-Cartan equation. In fact, it is easy to check that \( \sigma^* \omega^{\sigma} = 0 \).

Definitions~\ref{D:Weitzenbock-affine} and~\ref{D:Weitzenbock-principal} indeed coincide in the following sense.

\begin{pprop}
Let \( M \) be a Weitzenb{\"o}ck \( n \)-manifold with global smooth frame \( s \). Then, via the one-to-one correspondence~\eqref{E:connection}, the Weitzenb{\"o}ck affine connection\/ \( \del^s \) corresponds to the Weitzenb{\"o}ck principal\/ \( \mathrm{O}(n) \)-connection \( \omega^s \).
\end{pprop}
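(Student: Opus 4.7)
The plan is to verify the correspondence stated in~\eqref{E:connection} by checking that both sides produce the same connection coefficients with respect to the global smooth frame $s$. Recall that under the bijection~\eqref{E:connection}, a principal connection $\omega$ on $FM \surjto M$ determines a Koszul connection $\nabla$ on $TM \surjto M$ whose connection coefficients with respect to a local orthonormal smooth frame $s$ are precisely the components $s^*\omega^i_j$. So it suffices to show that $s^*\omega^s$ vanishes identically, because Proposition~\ref{P:Weitzenbock}(a) already tells us that the connection coefficients of $\nabla^s$ with respect to $s$ are all zero.

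To show $s^*\omega^s = 0$, I would unwind Definition~\ref{D:Weitzenbock-principal} applied to the principal $\mathrm{O}(n)$-bundle $FM \surjto M$ with section $s$. By definition, $\omega^s = \varphi_s^* \pi_G^* \mu$, where $\pi_G \colon M \times \mathrm{O}(n) \surjto \mathrm{O}(n)$ is the projection and $\mu$ is the Maurer-Cartan $1$-form on $\mathrm{O}(n)$. Hence
\begin{align}
s^* \omega^s & = s^* \varphi_s^* \pi_G^* \mu = {\left( \pi_G \circ \varphi_s \circ s \right)}^* \mu.
\end{align}
Now $\varphi_s \circ s$ is the map $x \mapsto (x, e)$ where $e \in \mathrm{O}(n)$ is the identity (since $\varphi_s^{-1}(x, g) = s(x) \cdot g$ and $s(x) \cdot e = s(x)$), so $\pi_G \circ \varphi_s \circ s$ is the constant map with value $e$. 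The pullback of any form along a constant map is zero, so $s^* \omega^s = 0$, as desired.

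Combining these, the Koszul connection corresponding to $\omega^s$ under~\eqref{E:connection} has all connection coefficients zero with respect to $s$, which uniquely characterizes $\nabla^s$ by Proposition~\ref{P:Weitzenbock}(a) and the defining property \( \nabla^s_X E_j = 0 \). Therefore $\nabla^s$ and $\omega^s$ correspond under~\eqref{E:connection}. The only substantive step is the computation $s^* \omega^s = 0$; the rest is a direct bookkeeping via the connection coefficient bridge. I do not foresee a significant obstacle, since the statement is essentially the observation already flagged in the remark following Definition~\ref{D:Weitzenbock-principal}.
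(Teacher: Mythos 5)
Your proposal is correct and follows essentially the same route as the paper: both reduce the claim to checking \( s^* \omega^s = 0 \) and then invoke Proposition~\ref{P:Weitzenbock}(a) for the vanishing of the connection coefficients of \( \del^s \). You simply spell out the computation \( s^* \omega^s = {\left( \pi_G \circ \varphi_s \circ s \right)}^* \mu = 0 \) that the paper leaves as "straightforward to check" (and had already flagged after Definition~\ref{D:Weitzenbock-principal}).
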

\begin{proof}
It is straightforward to check that \( s^* \omega^s = 0 \). Indeed, the connection coefficients of \( \del^s \) with respect to \( s \) are all zero by Proposition~\ref{P:Weitzenbock}.
\end{proof}

\begin{prmrk}
A flat connection is not necessarily a Weitzenb{\"o}ck connection. The difference between them is \emph{holonomy}. Weitzenb{\"o}ck connections are globally nonholonomic, while flat connections can be holonomic. See Sections~3.3--3.10 in~\cite{HimpelCS}.
\end{prmrk}

\begin{prmrk}
All Weitzenb{\"o}ck connections are \emph{gauge-equivalent} to each other. In the setup of Definition~\ref{D:Weitzenbock-principal}, suppose that \( \sigma, \sigma' \colon M \injto P \) are two global smooth sections. Then \( \omega^{\sigma'} = \varphi^* \omega^{\sigma} \) by the gauge transformation \( \varphi = \varphi_{\sigma}^{-1} \circ \varphi_{\sigma'} \colon P \isoto P \).
\end{prmrk}

\section{Riemann-Cartan submanifolds} \label{S:submanifolds}

In this section, we extend the framework of submanifolds in Riemannian geometry to Riemann-Cartan geometry. We consider fundamental geometric tools such as the second fundamental form, the Weingarten map, and the extrinsic Gaussian and mean curvatures, which are adapted to account for the presence of ambient torsion. Readers are encouraged to pay attention to the differences from Riemannian geometry. Throughout this section, quantities related to the ambient manifold will be denoted by a tilde. For example, \( \tilde{\del} \), \( \tilde{T} \), \( \tilde{R} \), etc.

\subsection{The second fundamental form}

A Riemannian submanifold has the induced metric given by the restriction of the metric on the ambient manifold. A connection on the ambient manifold induces a connection via the projection.

\begin{plem} \label{L:induced-connection}
Let \( M \) be a Riemannian submanifold embedded in a Riemannian manifold \( \tilde{M} \). If\/ \( \tilde{\del} \) is a connection on \( \tilde{M} \), then the following produces a well-defined connection\/ \( \del \) on \( M \).
\begin{align}
\del_X Y & = {\big( \tilde{\del}_X \tilde{Y} \big)}{}^{\top},
\end{align}
where \( X, Y \in \Gamma(TM) \) and \( \tilde{Y} \) is an arbitrary smooth extension of \( Y \) to an open subset of \( \tilde{M} \). Furthermore, the following three hold.
\begin{enumerate}
\item If\/ \( \tilde{\del} \) is torsion-free, then so is\/ \( \del \).
\item If\/ \( \tilde{\del} \) is metric-compatible, then so is\/ \( \del \).
\item If\/ \( \tilde{\del} \) is the Levi-Civita connection, then so is\/ \( \del \).
\end{enumerate}
\end{plem}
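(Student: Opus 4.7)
The plan is to verify the assertion in two stages: first, well-definedness and the connection axioms for $\nabla$; second, each of the three structural properties.

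For well-definedness, the key point is that $\tilde{\del}_X \tilde{Y}$ at a point $x \in M$ depends only on the value of $X$ at $x$ and the germ of $\tilde{Y}$ near $x$ (by Definition~\ref{D:connection}). So I would take two smooth extensions $\tilde{Y}_1, \tilde{Y}_2$ of $Y$ to a neighborhood $U$ of $x$ in $\tilde{M}$ and show that $\tilde{\del}_X(\tilde{Y}_1 - \tilde{Y}_2)$ vanishes at $x$. Choosing a slice chart adapted to $M$, the components of $Z := \tilde{Y}_1 - \tilde{Y}_2$ vanish identically on $M$, so $X(Z^i) = 0$ for any $X \in T_x M$, whence $\tilde{\del}_X Z|_x = X(Z^i)\partial_i|_x + Z^i(x)\tilde{\del}_X \partial_i = 0$. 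Existence of at least one smooth extension (locally) is standard via a tubular neighborhood or a slice chart. The $\mathbb{R}$-bilinearity, $\mathcal{C}^\infty(M)$-linearity in $X$, and Leibniz rule in $Y$ then transfer from $\tilde{\del}$ after noting that if $\tilde{f}$ extends $f$, then $X(\tilde{f}) = X(f)$ whenever $X$ is tangent to $M$.

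For part~(a), I would pick local extensions $\tilde{X}, \tilde{Y}$ of $X, Y \in \Gamma(TM)$, and use the identity $\nabla_X Y - \nabla_Y X = (\tilde{\del}_{\tilde{X}}\tilde{Y} - \tilde{\del}_{\tilde{Y}}\tilde{X})^\top$. Torsion-freeness of $\tilde{\del}$ turns the right-hand side into $[\tilde{X},\tilde{Y}]^\top|_M$; since $X, Y$ are tangent to $M$, the bracket $[\tilde{X},\tilde{Y}]$ restricts to $[X,Y] \in \Gamma(TM)$, which is already tangential. For part~(b), given $X, Y, Z \in \Gamma(TM)$ and extensions $\tilde{Y}, \tilde{Z}$, I would use that $\langle Y,Z\rangle = \langle \tilde{Y},\tilde{Z}\rangle|_M$, differentiate via $\tilde{\del}$, and exploit that $Y, Z$ are tangential to replace $\langle \tilde{\del}_X \tilde{Y}, Z\rangle$ with $\langle (\tilde{\del}_X \tilde{Y})^\top, Z\rangle = \langle \nabla_X Y, Z\rangle$ (the normal component pairs trivially with the tangential vector $Z$). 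Part~(c) is then immediate: Levi-Civita-ness gives both torsion-freeness and metric-compatibility, and the uniqueness of the Levi-Civita connection on the induced Riemannian manifold $M$ identifies $\nabla$ with $\bar{\del}$.

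The only substantive obstacle is the well-definedness, because it is the one place where one actually has to say something about how a connection behaves on an extension rather than merely transport formal identities across the tangential projection. Everything else is bookkeeping: the tangential projection is $\mathcal{C}^\infty(M)$-linear, and the ambient identities (Leibniz, torsion-freeness, metric compatibility) are compatible with restricting to $M$ precisely because the input vector field $X$ is tangent to $M$, so $X$ differentiates extensions and their restrictions consistently.
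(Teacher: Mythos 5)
Your proof is correct. The paper does not actually prove this lemma itself---it simply defers to Chapter~8 of Lee's \emph{Introduction to Riemannian Manifolds}---and your argument is precisely the standard one found there: well-definedness by showing the difference of two extensions vanishes along \( M \) in a slice chart (so its derivative in tangential directions vanishes), and parts~(a)--(c) by transporting the ambient identities through the tangential projection, using that \( {[\tilde{X}, \tilde{Y}]} \) restricts to \( {[X,Y]} \in \Gamma(TM) \) and that normal components pair trivially with tangential vectors.
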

\begin{proof}
See, e.g., Chapter~8 of~\cite{MR3887684}.
\end{proof}

\noindent It is natural to define a \emph{Riemann-Cartan submanifold} embedded in a Riemann-Cartan manifold as a Riemannian submanifold with the metric-compatible connection defined by Lemma~\ref{L:induced-connection}, which is a Riemann-Cartan manifold. Throughout this paper, we will assume without mentioning that every smooth submanifold embedded in a Riemann-Cartan manifold inherits the Riemannian metric and the metric-compatible connection in this manner.

The second fundamental form in Riemann-Cartan geometry is defined as follows.

\begin{pdef} \label{D:SFF}
Let \( M \) be a smooth submanifold embedded in a Riemann-Cartan manifold \( \tilde{M} \). Let \( NM \surjto M \) be the normal bundle. The \emph{second fundamental form} of \( M \) is the map \( \hat{\II} \colon \Gamma(TM) \times \Gamma(TM) \to \Gamma(NM) \) defined by
\begin{align}
\hat{\II}(X,Y) & = {\big( \tilde{\del}_X \tilde{Y} \big)}{}^{\perp},
\end{align}
where \( X, Y \in \Gamma(TM) \) and \( \tilde{Y} \) is an arbitrary smooth extension of \( Y \) to an open subset of \( \tilde{M} \). \( M \) is said to be \emph{totally geodesic} in \( \tilde{M} \) iff \( \hat{\II} = 0 \) everywhere.
\end{pdef}

\begin{plem} \label{L:SFF}
In the setup of Definition~\ref{D:SFF}, the following four hold.
\begin{enumerate}
\item \( \hat{\II} \) is well defined, i.e.,\/ \( \hat{\II}(X,Y) \) does not depend on the choice of a smooth extension of \( Y \), where \( X, Y \in \Gamma(TM) \).
\item \( \hat{\II} \) is \( \mathcal{C}^{\infty} {\left( M, \mathbb{R} \right)} \)-bilinear.
\item \( \hat{\II}(X,Y) - \hat{\II}(Y,X) = \tilde{T}(X,Y)^{\perp} \) for all \( X, Y \in \Gamma(TM) \).
\item The value of\/ \( \hat{\II}(X,Y) \) at \( x \in M \) depends only on \( {\left. X \right|}_x \) and \( {\left. Y \right|}_x \).
\end{enumerate}
\end{plem}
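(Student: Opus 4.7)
The plan is to mimic the classical Riemannian proof, being careful that torsion-freeness is never silently used; the only genuinely new feature will be the torsion term that appears in~(c).

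For~(a), the strategy is to show that if $\tilde{Y}_1, \tilde{Y}_2$ are two smooth extensions of $Y$, then $Z := \tilde{Y}_1 - \tilde{Y}_2$ vanishes identically on $M$, and that $\tilde{\del}_X Z$ vanishes at every point $x \in M$ whenever $X \in T_x M$. I would pick a local smooth frame $(\tilde{E}_1, \dotsc, \tilde{E}_n)$ for $\tilde{M}$ near $x$ and write $Z = Z^i \tilde{E}_i$. The Leibniz rule gives $\tilde{\del}_X Z = X(Z^i)\tilde{E}_i + Z^i \tilde{\del}_X \tilde{E}_i$, and at $x \in M$ both terms vanish: the coefficients $Z^i$ are zero on $M$, so tangential differentiation kills them. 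Consequently $(\tilde{\del}_X \tilde{Y}_1)^\perp$ and $(\tilde{\del}_X \tilde{Y}_2)^\perp$ agree on $M$.

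For~(b), linearity in $X$ is immediate from $\tilde{\del}_{fX}\tilde{Y} = f\tilde{\del}_X \tilde{Y}$. For linearity in $Y$, I would extend $f \in \mathcal{C}^\infty(M,\mathbb{R})$ to $\tilde{f} \in \mathcal{C}^\infty(\tilde{M},\mathbb{R})$ so that $\tilde{f}\tilde{Y}$ extends $fY$, and then use the Leibniz rule $\tilde{\del}_X(\tilde{f}\tilde{Y}) = X(\tilde{f})\tilde{Y} + \tilde{f}\tilde{\del}_X \tilde{Y}$. On $M$, the first summand equals $X(f) Y$, which is tangent, so it drops out upon taking the normal component, leaving $\hat{\II}(X, fY) = f\hat{\II}(X,Y)$. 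Additivity in both slots is obvious.

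For~(c), I would choose smooth extensions $\tilde{X}, \tilde{Y}$ of $X, Y$ to an open neighborhood of $M$ in $\tilde{M}$ and write
\begin{align*}
\hat{\II}(X,Y) - \hat{\II}(Y,X) & = \bigl(\tilde{\del}_X \tilde{Y} - \tilde{\del}_Y \tilde{X}\bigr)^\perp = \bigl(\tilde{T}(X,Y) + [\tilde{X}, \tilde{Y}]\bigr)^\perp
\end{align*}
directly from the definition of torsion. The key observation is that the bracket $[\tilde{X}, \tilde{Y}]$ restricted to $M$ coincides with the intrinsic bracket $[X,Y] \in \Gamma(TM)$, hence is tangent to $M$, so $[\tilde{X},\tilde{Y}]^\perp = 0$ on $M$, and only the torsion contribution $\tilde{T}(X,Y)^\perp$ survives. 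Finally~(d) is immediate: by~(b), $\hat{\II}$ is a $\mathcal{C}^\infty(M,\mathbb{R})$-bilinear map between sections of vector bundles, so standard tensor algebra promotes it to a smooth $(1,2)$-tensor field whose value at $x$ depends only on $X|_x$ and $Y|_x$.

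There is no real obstacle here; the only thing to watch is that torsion is not assumed to vanish, so the symmetry $\hat{\II}(X,Y) = \hat{\II}(Y,X)$ used in the Riemannian setting is unavailable and must be replaced by the torsion-corrected identity in~(c). Everything else proceeds just as in the Levi-Civita case.
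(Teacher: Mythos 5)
Your proof is correct; every step goes through, and you are right that the only place where the absence of torsion-freeness matters is the identity in~(c), where the bracket term \( [\tilde{X}, \tilde{Y}] \) restricts to the intrinsic bracket \( [X,Y] \in \Gamma(TM) \) and is therefore killed by \( {}^{\perp} \), leaving exactly \( \tilde{T}(X,Y)^{\perp} \). The paper's (very terse) proof is organized the other way around: it first notes that the identity in~(c) holds for arbitrary extensions, and then uses it to deduce~(d) --- the point being that \( (\tilde{\del}_X \tilde{Y})^{\perp} \) at \( x \) visibly depends only on \( X|_x \) and the germ of \( Y \), while by~(c) it also equals \( (\tilde{\del}_Y \tilde{X})^{\perp} + \tilde{T}(X,Y)^{\perp} \), which depends only on \( Y|_x \), the germ of \( X \), and \( X|_x \); intersecting the two dependencies gives pointwise dependence on \( X|_x \) and \( Y|_x \), from which (a) and (b) fall out. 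You instead prove~(a) directly by the local-frame Leibniz computation (the difference \( Z \) of two extensions vanishes on \( M \), so tangential derivatives of its components vanish), prove~(b) directly, and get~(d) from~(b) via the tensor characterization lemma. Both routes are standard; yours is more self-contained and does not need the ``depends only on the intersection of the data'' bookkeeping, while the paper's exploits the torsion identity to make a single observation do all the work. No gaps.
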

\begin{proof}
The equality in~(c) holds for any smooth extensions of \( X, Y \in \Gamma(TM) \). This implies~(d), and then the others also follow.
\end{proof}

\noindent Note that the second fundamental form in Riemann-Cartan geometry fails to be symmetric in general. It depends on the normal component of ambient torsion.

The Gauss equation holds in Riemann-Cartan geometry as well.

\begin{pprop} \label{P:SFF}
Let \( M \) be a smooth submanifold embedded in a Riemann-Cartan manifold \( \tilde{M} \). For all \( W, X, Y, Z \in \Gamma(TM) \), the following three hold.
\begin{gather}
\tilde{\del}_X Y = \del_X Y + \hat{\II}(X,Y), \qquad \tilde{T}(X,Y) = T(X,Y) + \hat{\II}(X,Y) - \hat{\II}(Y,X), \\
\tilde{R}(W,X,Y,Z) = R(W,X,Y,Z) - {\big\langle \hat{\II}(W,Z), \hat{\II}(X,Y) \big\rangle} + {\big\langle \hat{\II}(W,Y), \hat{\II}(X,Z) \big\rangle}.
\end{gather}
\end{pprop}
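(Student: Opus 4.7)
The plan is to handle the three assertions in order, with each one feeding into the next. The first equation is tautological from the setup: by Lemma~\ref{L:induced-connection} we have $\del_X Y = (\tilde\del_X Y)^\top$, while by Definition~\ref{D:SFF} we have $\hat{\II}(X,Y) = (\tilde\del_X Y)^\perp$, so summing tangent and normal parts recovers $\tilde\del_X Y$. For the second equation, I would substitute this decomposition into $\tilde T(X,Y) = \tilde\del_X Y - \tilde\del_Y X - [X,Y]$, noting that $[X,Y] \in \Gamma(TM)$ since $X, Y \in \Gamma(TM)$; the tangential pieces collect into $T(X,Y)$ and the normal pieces yield $\hat{\II}(X,Y) - \hat{\II}(Y,X)$.

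The third equation, which is the honest Gauss equation, is the main step. My strategy is the classical one: expand $\tilde R(W,X)Y$ via its definition and pair with $Z \in \Gamma(TM)$. Using equation~(a), write $\tilde\del_X Y = \del_X Y + \hat{\II}(X,Y)$ and then
\begin{align*}
\tilde\del_W \tilde\del_X Y & = \del_W \del_X Y + \hat{\II}{\left( W, \del_X Y \right)} + \tilde\del_W \hat{\II}(X,Y).
\end{align*}
To extract the tangential part of $\tilde\del_W \hat{\II}(X,Y)$, I would first establish the Weingarten-type identity
\begin{align*}
{\left\langle \tilde\del_W N, Z \right\rangle} & = - {\left\langle N, \hat{\II}(W,Z) \right\rangle}
\end{align*}
for any $N \in \Gamma(NM)$ and $Z \in \Gamma(TM)$, which follows at once from the metric-compatibility of $\tilde\del$ applied to $\langle N, Z \rangle = 0$ together with equation~(a).

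Pairing the expansion of $\tilde\del_W \tilde\del_X Y$ with $Z$ kills $\hat{\II}(W,\del_X Y)$ and leaves $\langle \del_W \del_X Y, Z \rangle - \langle \hat{\II}(X,Y), \hat{\II}(W,Z) \rangle$; the analogous computation for $\tilde\del_X \tilde\del_W Y$ contributes $\langle \del_X \del_W Y, Z \rangle - \langle \hat{\II}(W,Y), \hat{\II}(X,Z) \rangle$; finally $\langle \tilde\del_{[W,X]} Y, Z \rangle = \langle \del_{[W,X]} Y, Z \rangle$ since the normal term is orthogonal to $Z$. Assembling these three gives exactly the stated identity.

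The only delicate point, and where I expect a reader to want extra care, is confirming that no torsion-induced sign or extra term slips in when we apply metric-compatibility to derive the Weingarten identity. The argument nowhere uses symmetry of $\hat{\II}$ or of $\tilde{R}$ in the first two slots; everything relies solely on metric-compatibility of $\tilde\del$ and the tangential/normal decomposition. This is reassuring, since Lemma~\ref{L:SFF}~(c) already told us that $\hat{\II}$ fails to be symmetric precisely by the normal part of $\tilde T$, and the proof absorbs this asymmetry without extra work.
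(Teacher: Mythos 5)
Your proposal is correct and follows essentially the same route as the paper: the first two identities are read off from the tangential/normal decomposition, and the Gauss equation is obtained by expanding $\tilde{\del}_W \tilde{\del}_X Y$ and using metric-compatibility to convert $\big\langle \tilde{\del}_W \hat{\II}(X,Y), Z \big\rangle$ into $-\big\langle \hat{\II}(X,Y), \tilde{\del}_W Z \big\rangle = -\big\langle \hat{\II}(X,Y), \hat{\II}(W,Z) \big\rangle$, which is exactly your Weingarten-type identity stated in-line. The only cosmetic difference is that the paper fixes smooth extensions of $W,X,Y,Z$ to an open subset of $\tilde{M}$ at the outset, a point worth making explicit.
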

\begin{proof}
The only nontrivial equality is the last one. Fix \( W, X, Y, Z \in \Gamma(TM) \) and extend them arbitrarily smoothly to an open subset of \( \tilde{M} \). Along \( M \), we have
\begin{align*}
& \tilde{R}(W,X,Y,Z) = {\big\langle \tilde{\del}_W \tilde{\del}_X Y - \tilde{\del}_X \tilde{\del}_W Y - \tilde{\del}_{[W, X]} Y, Z \big\rangle} \\
& \qquad = {\big\langle \tilde{\del}_W {\big( \del_X Y + \hat{\II}(X,Y) \big)} - \tilde{\del}_X {\big( \del_W Y + \hat{\II}(W,Y) \big)} - \tilde{\del}_{[W, X]} Y, Z \big\rangle} \\
& \qquad = \begin{aligned}[t]
& {\big\langle \tilde{\del}_W \del_X Y - \tilde{\del}_X \del_W Y - \tilde{\del}_{[W, X]} Y, Z \big\rangle} \\
& + {\big\langle \tilde{\del}_W \hat{\II}(X,Y), Z \big\rangle} - {\big\langle \tilde{\del}_X \hat{\II}(W,Y), Z \big\rangle}
\end{aligned} \\
& \qquad = \begin{aligned}[t]
& {\left\langle \del_W \del_X Y - \del_X \del_W Y - \del_{[W, X]} Y, Z \right\rangle} \\
& - {\big\langle \hat{\II}(X,Y), \tilde{\del}_W Z \big\rangle} + {\big\langle \hat{\II}(W,Y), \tilde{\del}_X Z \big\rangle}
\end{aligned} \\
& \qquad = R(W,X,Y,Z) - {\big\langle \hat{\II}(X,Y), \hat{\II}(W,Z) \big\rangle} + {\big\langle \hat{\II}(W,Y), \hat{\II}(X,Z) \big\rangle}.
\end{align*}
This completes the proof.
\end{proof}

Now, we restrict ourselves to the case of oriented smooth hypersurfaces. In this case, the orientation determines a unique normal direction.

\begin{pdef} \label{D:SSFF}
Let \( M \) be an oriented smooth hypersurface embedded in an oriented Riemann-Cartan manifold \( \tilde{M} \), where \( M \) is oriented by the unit normal vector field \( N \) along \( M \). The \emph{(scalar) second fundamental form} of \( M \) is the smooth \( (0,2) \)-tensor field \( \II \) on \( M \) defined by
\begin{align}
\II(X,Y) & = {\big\langle N, \hat{\II}(X,Y) \big\rangle}, \quad \text{i.e.,} \quad \hat{\II}(X,Y) = \II(X,Y) N,
\end{align}
where \( X, Y \in \Gamma(TM) \).
\end{pdef}

\noindent Again, this fails to be symmetric in general. Indeed, it is symmetric iff the following differential \( 2 \)-form vanishes.

\begin{pdef}[Torsion \( 2 \)-form] \label{D:torsion-form}
In the setup of Definition~\ref{D:SSFF}, the \emph{torsion\/ \( 2 \)-form} of \( M \) is \( \tau \in \Omega^2 {\left( M, \mathbb{R} \right)} \) defined by
\begin{align}
\tau(X,Y) & = {\big\langle N, \tilde{T}(X,Y) \big\rangle},
\end{align}
where \( X, Y \in \Gamma(TM) \).
\end{pdef}

\begin{pprop} \label{P:SSFF}
In the setup of Definition~\ref{D:SSFF}, for all \( W, X, Y, Z \in \Gamma(TM) \), the following four hold.
\begin{gather}
\tilde{\del}_X Y = \del_X Y + \II(X,Y) N, \qquad \II(X,Y) - \II(Y,X) = \tau(X,Y), \\
\tilde{T}(X,Y) = T(X,Y) + \tau(X,Y) N, \\
\tilde{R}(W,X,Y,Z) = R(W,X,Y,Z) - \II(W,Z) \, \II(X,Y) + \II(W,Y) \, \II(X,Z).
\end{gather}
\end{pprop}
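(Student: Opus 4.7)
The plan is to read off all four identities as direct specializations of the corresponding hypersurface-agnostic statements in Proposition~\ref{P:SFF} and Lemma~\ref{L:SFF}, using only the fact that $\hat{\II}(X,Y) = \II(X,Y)\, N$ (from Definition~\ref{D:SSFF}) and that $N$ is a unit normal field. No new geometric input is needed; this proposition is essentially bookkeeping that rewrites the vector-valued second fundamental form in terms of the scalar one.

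First, I would dispatch (a). The first identity $\tilde{\del}_X Y = \del_X Y + \II(X,Y)N$ is obtained by taking the Gauss-type decomposition $\tilde{\del}_X Y = \del_X Y + \hat{\II}(X,Y)$ from Proposition~\ref{P:SFF} and substituting $\hat{\II}(X,Y) = \II(X,Y)N$. For the second identity, Lemma~\ref{L:SFF}(c) gives $\hat{\II}(X,Y) - \hat{\II}(Y,X) = \tilde{T}(X,Y)^{\perp}$; pairing with $N$ yields $\II(X,Y) - \II(Y,X) = \langle N, \tilde{T}(X,Y)^{\perp}\rangle$. Since $N$ is normal, $\langle N, \tilde{T}(X,Y)^{\perp}\rangle = \langle N, \tilde{T}(X,Y)\rangle = \tau(X,Y)$ by Definition~\ref{D:torsion-form}.

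Next, for (b) I would take the torsion-splitting identity $\tilde{T}(X,Y) = T(X,Y) + \hat{\II}(X,Y) - \hat{\II}(Y,X)$ from Proposition~\ref{P:SFF}, factor the normal piece as $(\II(X,Y) - \II(Y,X))N$, and invoke the identity just established in (a) to get $\tau(X,Y)N$.

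Finally, for (c)—the Gauss equation—I would quote the last equality of Proposition~\ref{P:SFF} and compute
\begin{align*}
\langle \hat{\II}(W,Z), \hat{\II}(X,Y)\rangle &= \II(W,Z)\,\II(X,Y)\,\langle N, N\rangle = \II(W,Z)\,\II(X,Y),
\end{align*}
and similarly for the other pair, using $\langle N, N\rangle = 1$. Substituting gives the claimed formula.

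I do not foresee any real obstacle: the entire argument is a substitution and is immediate from the already-proved vector-valued versions. The only mildly subtle point is the harmless replacement $\langle N, \tilde{T}(X,Y)^{\perp}\rangle = \langle N, \tilde{T}(X,Y)\rangle$, which holds because the tangential component of $\tilde{T}(X,Y)$ is orthogonal to $N$.
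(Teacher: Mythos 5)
Your proposal is correct and follows exactly the route the paper takes: the paper states that this proposition is an immediate corollary of Lemma~\ref{L:SFF} and Proposition~\ref{P:SFF}, and your write-up simply fills in the substitution $\hat{\II}(X,Y)=\II(X,Y)N$, the normalization $\langle N,N\rangle=1$, and the identification $\langle N,\tilde{T}(X,Y)^{\perp}\rangle=\langle N,\tilde{T}(X,Y)\rangle=\tau(X,Y)$ that make this immediate.
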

\begin{proof}
This is an immediate corollary of Lemma~\ref{L:SFF} and Proposition~\ref{P:SFF}.
\end{proof}

The Weingarten map in Riemann-Cartan geometry is given as follows.

\begin{pdef} \label{D:Weingarten-map}
Suppose the setup of Definition~\ref{D:SSFF}. The \emph{Weingarten map} of \( M \) is the smooth \( (1,1) \)-tensor field \( W \) on \( M \) determined by
\begin{align}
\II(X,Y) & = {\left\langle W(X), Y \right\rangle}_M \quad \text{for all} \ X, Y \in \Gamma(TM),
\end{align}
where \( {\left\langle {}\cdot{}, {}\cdot{} \right\rangle}_M \) denotes the induced Riemannian metric on \( M \). The \emph{extrinsic Gaussian curvature} and the \emph{mean curvature} of \( M \) are the smooth scalar fields on \( M \) defined by \( K^{\mathrm{e}} = \det W \) and \( H = \tr W \) respectively. The \emph{third fundamental form} of \( M \) is the symmetric smooth \( (0,2) \)-tensor field \( \III \) on \( M \) defined by \( \III(X,Y) = {\left\langle W(X), W(Y) \right\rangle} \), where \( X, Y \in \Gamma(TM) \).
\end{pdef}

\begin{pprop} \label{P:Weingarten-equation}
In the setup of Definition~\ref{D:SSFF}, we have
\begin{align}
W(X) & = - \tilde{\del}_X N
\end{align}
for any \( X \in \Gamma(TM) \) and any smooth extension of \( N \) to an open subset of \( \tilde{M} \).
\end{pprop}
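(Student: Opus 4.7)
The plan is to mimic the classical Riemannian proof, noting that the argument uses only metric compatibility of \( \tilde{\del} \), which holds in Riemann-Cartan geometry, and not torsion-freeness. Fix \( X \in \Gamma(TM) \) and any smooth extension of \( N \) to an open subset of \( \tilde{M} \); the claim is independent of this extension since \( \tilde{\del}_X N \) at \( x \in M \) depends only on \( {\left. X \right|}_x \) and on the values of \( N \) along \( M \).

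First I would verify that \( \tilde{\del}_X N \) is tangent to \( M \) along \( M \). Since \( {\left\langle N, N \right\rangle} \equiv 1 \) along \( M \), differentiating along \( X \in \Gamma(TM) \) and using metric compatibility of \( \tilde{\del} \) gives \( 0 = X {\left\langle N, N \right\rangle} = 2 {\big\langle \tilde{\del}_X N, N \big\rangle} \), so \( \tilde{\del}_X N \perp N \) along \( M \), i.e., \( \tilde{\del}_X N \) is tangent to \( M \).

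Next I would pair \( \tilde{\del}_X N \) with an arbitrary \( Y \in \Gamma(TM) \). Extend \( Y \) smoothly to an open subset of \( \tilde{M} \). Since \( {\left\langle N, Y \right\rangle} \equiv 0 \) along \( M \), metric compatibility of \( \tilde{\del} \) gives, along \( M \),
\begin{align*}
0 & = X {\left\langle N, Y \right\rangle} = {\big\langle \tilde{\del}_X N, Y \big\rangle} + {\big\langle N, \tilde{\del}_X Y \big\rangle}.
\end{align*}
By Proposition~\ref{P:SSFF}, \( \tilde{\del}_X Y = \del_X Y + \II(X,Y) N \), and since \( \del_X Y \in \Gamma(TM) \) is orthogonal to \( N \), we obtain \( {\big\langle N, \tilde{\del}_X Y \big\rangle} = \II(X,Y) = {\left\langle W(X), Y \right\rangle} \) by Definition~\ref{D:Weingarten-map}. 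Therefore \( {\big\langle \tilde{\del}_X N, Y \big\rangle} = - {\left\langle W(X), Y \right\rangle} \) for every \( Y \in \Gamma(TM) \).

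Since \( \tilde{\del}_X N \) is tangent to \( M \) and agrees with \( -W(X) \) in the induced metric against every tangent vector, nondegeneracy of \( {\left\langle {}\cdot{}, {}\cdot{} \right\rangle}_M \) concludes \( \tilde{\del}_X N = - W(X) \). There is no substantive obstacle here; the only point worth emphasizing is that ambient torsion plays no role, because the identity is extracted purely from metric compatibility applied to the constraints \( {\left\langle N, N \right\rangle} = 1 \) and \( {\left\langle N, Y \right\rangle} = 0 \) along \( M \).
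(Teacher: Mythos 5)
Your proof is correct and follows essentially the same route as the paper's: both establish tangency of \( \tilde{\del}_X N \) from \( X {\left\langle N, N \right\rangle} = 0 \) and then use metric compatibility applied to \( {\left\langle N, Y \right\rangle} = 0 \) to identify \( {\big\langle {-\tilde{\del}_X N}, Y \big\rangle} \) with \( \II(X,Y) = {\left\langle W(X), Y \right\rangle} \). The only difference is cosmetic — you spell out the nondegeneracy step and the independence of the extension, which the paper leaves implicit.
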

\begin{proof}
Fix \( X, Y \in \Gamma(TM) \) and extend them arbitrarily smoothly to an open subset of \( \tilde{M} \). Since \( 2 {\big\langle \tilde{\del}_X N, N \big\rangle} = X {\left\langle N, N \right\rangle} = 0 \) along \( M \), we see that \( - \tilde{\del}_X N \in \Gamma(TM) \). Therefore, along \( M \), we have
\begin{align*}
\II(X,Y) & = {\big\langle N, \hat{\II}(X,Y) \big\rangle} = {\big\langle N, \tilde{\del}_X Y \big\rangle} = {\big\langle - \tilde{\del}_X N, Y \big\rangle} = {\big\langle - \tilde{\del}_X N, Y \big\rangle}_M,
\end{align*}
where \( {\left\langle {}\cdot{}, {}\cdot{} \right\rangle}_M \) denotes the induced Riemannian metric on \( M \).
\end{proof}

\begin{prmrk}
In Definition~\ref{D:Weingarten-map}, if we replace the orientation of \( M \) by the opposite one, then the extrinsic Gaussian curvature \( K^{\mathrm{e}} \) is multiplied by \( (-1)^{\dim M} \) and the mean curvature \( H \) changes its sign. If \( M \) is even-dimensional, then \( K^{\mathrm{e}} \) does not depend on the choice of an orientation of \( M \) (and the orientability of \( M \)).
\end{prmrk}

\subsection{Sectional curvature}

We further discuss the Gaussian curvature in the context of Riemann-Cartan geometry, which naturally introduces sectional curvature.

\begin{pprop}[Theorema Egregium, Riemann-Cartan version] \label{P:TE}
Let \( M \) be a smooth surface embedded in a flat Riemann-Cartan\/ \( 3 \)-manifold. Then \( K^{\mathrm{e}} = \frac{1}{2} \Scal \), which is an intrinsic quantity of \( M \).
\end{pprop}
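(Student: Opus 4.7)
The plan is to combine the Riemann-Cartan Gauss equation (the last identity in Proposition~\ref{P:SSFF}) with the hypothesis $\tilde{R} = 0$, then compare the resulting formula with the definition of scalar curvature of $M$. The subtlety relative to the Riemannian case is that $\II$ is no longer symmetric and the Riemann curvature $R$ of $M$ only enjoys the two antisymmetries (in the first pair and in the last pair) — the pair-swap symmetry and the first Bianchi identity are not available. I will carry everything out in a local orthonormal frame $(E_1, E_2)$ for $M$.

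First I would expand $K^{\mathrm{e}} = \det W$ in the frame. Writing $W(E_i) = W_i^k E_k$, the identity $\II(X,Y) = \langle W(X), Y \rangle_M$ gives $W_i^j = \II(E_i, E_j)$, so
\begin{align*}
K^{\mathrm{e}} & = \II(E_1, E_1)\, \II(E_2, E_2) - \II(E_1, E_2)\, \II(E_2, E_1).
\end{align*}
Next, I would apply the Gauss equation of Proposition~\ref{P:SSFF} to $(W,X,Y,Z) = (E_1, E_2, E_1, E_2)$; since $\tilde{R} = 0$ by assumption, this yields
\begin{align*}
R(E_1, E_2, E_1, E_2) & = \II(E_1, E_2)\, \II(E_2, E_1) - \II(E_1, E_1)\, \II(E_2, E_2) = -K^{\mathrm{e}}.
\end{align*}
Note that the non-symmetry of $\II$ causes no harm here, because the Gauss equation itself is not symmetrized.

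It remains to express $\Scal$ in terms of $R(E_1, E_2, E_1, E_2)$. Using the definitions of $\Ric$ and $\Scal$ from Section~\ref{S:preliminaries}, one has
\begin{align*}
\Scal & = \sum_i \Ric(E_i, E_i) = \sum_{i,k} R(E_k, E_i, E_i, E_k),
\end{align*}
and in dimension two the terms with $i = k$ vanish by antisymmetry in the first pair, leaving $\Scal = R(E_2, E_1, E_1, E_2) + R(E_1, E_2, E_2, E_1)$. Applying the two antisymmetries to each summand (this is the only place where it matters that we have \emph{only} these two symmetries in Riemann-Cartan geometry) both terms equal $-R(E_1, E_2, E_1, E_2)$, so $\Scal = -2 R(E_1, E_2, E_1, E_2) = 2 K^{\mathrm{e}}$.

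Finally, since $\Scal$ is constructed purely from the induced Riemann-Cartan structure on $M$ (the induced metric and the induced metric-compatible connection provided by Lemma~\ref{L:induced-connection}), it is manifestly intrinsic, and so is $K^{\mathrm{e}} = \tfrac{1}{2}\Scal$. The only real thing to watch out for is that one must not inadvertently invoke the pair-swap symmetry $R(W,X,Y,Z) = R(Y,Z,W,X)$, which is unavailable here; I expect this to be the sole conceptual hazard in an otherwise short computation.
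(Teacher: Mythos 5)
Your proposal is correct and follows essentially the same route as the paper: specialize the Gauss equation of Proposition~\ref{P:SSFF} to an orthonormal frame with \( \tilde{R} = 0 \), identify the surviving curvature component with \( K^{\mathrm{e}} \), and trace twice to get \( \Scal \) using only the two available antisymmetries. The paper merely evaluates at \( (E_1, E_2, E_2, E_1) \) instead of \( (E_1, E_2, E_1, E_2) \) and leaves the \( \Scal \) computation implicit, which your write-up usefully spells out.
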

\begin{proof}
Fix \( x \in M \) and let \( {\left\{ E_1, E_2 \right\}} \) be an orthonormal basis for \( T_x M \). By Proposition~\ref{P:SSFF}, we have \( R {\left( E_1, E_2, E_2, E_1 \right)} = \II {\left( E_1, E_1 \right)} \, \II {\left( E_2, E_2 \right)} - \II {\left( E_1, E_2 \right)} \, \II {\left( E_2, E_1 \right)} \). The left-hand side equals \( \frac{1}{2} \Scal \) and the right-hand side equals \( K^{\mathrm{e}} \).
\end{proof}

\noindent This motivates the following definition.

\begin{pdef}
The \emph{intrinsic Gaussian curvature} of a Riemann-Cartan surface is the half of its scalar curvature, i.e., \( K = \frac{1}{2} \Scal \).
\end{pdef}

\noindent This is closely related to sectional curvature as in Riemannian geometry.

\begin{pdef}
Let \( M \) be a Riemann-Cartan manifold. Let \( \Pi \) be a \( 2 \)-dimensional subspace of \( T_x M \) for some \( x \in M \). The \emph{\( \Pi \)-sectional curvature} of \( M \) is defined by
\begin{align}
\sec \! {\left( \Pi \right)} & = \frac{R {\left( u, v, v, u \right)}}{{\left\langle u, u \right\rangle} {\left\langle v, v \right\rangle} - {\left\langle u, v \right\rangle}{}^2},
\end{align}
where \( {\left\{ u, v \right\}} \) is any basis for \( \Pi \). This is independent of the choice of \( {\left\{ u, v \right\}} \).
\end{pdef}

\begin{pprop} \label{P:sectional-curvature}
Let \( M \) be a Riemann-Cartan surface. Then\/ \( \sec \! {\left( T_x M \right)} = K \) at every \( x \in M \).
\end{pprop}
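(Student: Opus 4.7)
The plan is to fix an arbitrary point $x \in M$, choose an orthonormal basis $\{E_1, E_2\}$ for $T_x M$, and reduce both sides of the asserted equation to the single scalar $R(E_1, E_2, E_2, E_1)$ by direct computation.

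For the left-hand side, since $\{E_1, E_2\}$ is orthonormal, the denominator in the definition of $\sec(T_x M)$ equals $\langle E_1, E_1\rangle \langle E_2, E_2\rangle - \langle E_1, E_2\rangle^2 = 1$, so simply
\begin{align*}
\sec(T_x M) & = R(E_1, E_2, E_2, E_1).
\end{align*}
For the right-hand side, I first unfold $\Ric$ in the orthonormal basis as $\Ric(X, Y) = \sum_k R(E_k, X, Y, E_k)$, using that the trace of $Z \mapsto R(Z, X)Y$ against an orthonormal basis is this sum. Taking the metric trace of $\Ric$ then gives
\begin{align*}
\Scal & = \sum_{i} \Ric(E_i, E_i) = \sum_{i, k} R(E_k, E_i, E_i, E_k).
\end{align*}

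In dimension two this sum has four terms. The two diagonal contributions with $i = k$ vanish by the antisymmetry $R(W, X, Y, Z) = -R(X, W, Y, Z)$ in the first two slots, which was established earlier in the paper for Riemann-Cartan manifolds. The remaining two off-diagonal terms are $R(E_2, E_1, E_1, E_2)$ and $R(E_1, E_2, E_2, E_1)$, and applying both antisymmetries of $R$ yields $R(E_2, E_1, E_1, E_2) = -R(E_1, E_2, E_1, E_2) = R(E_1, E_2, E_2, E_1)$. Hence $\Scal = 2 R(E_1, E_2, E_2, E_1)$, and dividing by $2$ gives $K = \frac{1}{2} \Scal = \sec(T_x M)$, as desired.

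There is no genuine obstacle here beyond careful bookkeeping: the argument uses only the two antisymmetries of $R$ that were shown to survive in Riemann-Cartan geometry, and it crucially does \emph{not} rely on the interchange symmetry $R(W, X, Y, Z) = R(Y, Z, W, X)$, which generally fails when torsion is nonzero. It is also worth noting that $\sec(T_x M)$ is well-defined independently of the basis (as remarked in the preceding definition), which is what lets the pointwise computation produce a single scalar.
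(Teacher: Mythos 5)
Your proof is correct and follows essentially the same route as the paper's, which also fixes an orthonormal basis $\{E_1, E_2\}$ and identifies $\sec(T_x M) = R(E_1, E_2, E_2, E_1) = \frac{1}{2}\Scal = K$; you have simply spelled out the trace computations that the paper leaves implicit. Your remark that only the two surviving antisymmetries of $R$ are needed (and not the pair-interchange symmetry) is accurate and correctly identifies why the argument goes through despite the torsion.
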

\begin{proof}
Fix \( x \in M \) and let \( {\left\{ E_1, E_2 \right\}} \) be an orthonormal basis for \( T_x M \). Then we have \( \sec \! {\left( T_x M \right)} = R {\left( E_1, E_2, E_2, E_1 \right)} = \frac{1}{2} \Scal = K \).
\end{proof}

\begin{pprop} \label{P:Gaussian-curvature}
Let \( M \) be a smooth surface embedded in a Riemann-Cartan\/ \( 3 \)-manifold \( \tilde{M} \). Then\/ \( \widetilde{\sec} {\left( T_x M \right)} = K - K^{\mathrm{e}} \) at every \( x \in M \).
\end{pprop}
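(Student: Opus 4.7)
The proof is essentially a direct application of the Gauss equation in Proposition~\ref{P:SSFF}, combined with the identification of sectional curvature on a surface given in Proposition~\ref{P:sectional-curvature}. My plan is to pick a convenient orthonormal basis for $T_x M$ and read off both sides.

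Fix $x \in M$ and let $\{E_1, E_2\}$ be an orthonormal basis for $T_x M$. Since the sectional curvature of a $2$-plane is independent of the chosen basis, we have
\begin{align*}
\widetilde{\sec}(T_x M) & = \tilde{R}(E_1, E_2, E_2, E_1).
\end{align*}
Apply the scalar Gauss equation from Proposition~\ref{P:SSFF} with $W = Z = E_1$ and $X = Y = E_2$ to obtain
\begin{align*}
\tilde{R}(E_1, E_2, E_2, E_1) & = R(E_1, E_2, E_2, E_1) - \II(E_1, E_1) \, \II(E_2, E_2) + \II(E_1, E_2) \, \II(E_2, E_1).
\end{align*}
The intrinsic term is $\sec(T_x M) = K$ by Proposition~\ref{P:sectional-curvature}.

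It remains to identify the extrinsic terms with $K^{\mathrm{e}} = \det W$. Writing $W(E_i) = W^j_i E_j$, orthonormality of $\{E_1, E_2\}$ gives $\II(E_i, E_j) = \langle W(E_i), E_j \rangle = W^j_i$, so the matrix $[\II(E_i, E_j)]$ is the transpose of the matrix $[W^i_j]$ of the Weingarten map. In particular, even though $\II$ is not symmetric in general (Proposition~\ref{P:SSFF}), the $2\times 2$ determinant satisfies
\begin{align*}
\II(E_1, E_1) \, \II(E_2, E_2) - \II(E_1, E_2) \, \II(E_2, E_1) & = \det W = K^{\mathrm{e}}.
\end{align*}
Substituting back yields $\widetilde{\sec}(T_x M) = K - K^{\mathrm{e}}$.

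There is no real obstacle here; the only point that deserves a moment of attention is the mild subtlety that $\II$ may fail to be symmetric, which makes one briefly worry whether the extrinsic combination genuinely equals $\det W$. The transpose identification above settles this, and the rest of the argument is just bookkeeping inside the Gauss equation. I would present the proof as a short paragraph, since all the substantive work has already been done in Propositions~\ref{P:SSFF} and~\ref{P:sectional-curvature}.
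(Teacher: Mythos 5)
Your proposal is correct and follows essentially the same route as the paper: fix an orthonormal basis $\{E_1, E_2\}$ of $T_x M$, apply the Gauss equation of Proposition~\ref{P:SSFF} to $\tilde{R}(E_1,E_2,E_2,E_1)$, and identify the two sides via Proposition~\ref{P:sectional-curvature} and $K^{\mathrm{e}} = \det W$. The only difference is that you spell out the transpose relation between $[\II(E_i,E_j)]$ and the matrix of $W$ to justify the determinant identification despite the asymmetry of $\II$, a detail the paper leaves implicit.
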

\begin{proof}
Fix \( x \in M \) and let \( {\left\{ E_1, E_2 \right\}} \) be an orthonormal basis for \( T_x M \). By Proposition~\ref{P:SSFF}, we have \( \tilde{R} {\left( E_1, E_2, E_2, E_1 \right)} = R {\left( E_1, E_2, E_2, E_1 \right)} - K^{\mathrm{e}} \).
\end{proof}

The Gauss-Bonnet theorem works in Riemann-Cartan geometry as well.

\begin{plem} \label{L:Euler-class}
Let \( M \) be an oriented Riemann-Cartan surface. Let \( FM \surjto M \) be the oriented orthonormal frame bundle, and let \( \omega \in \Omega^1 {\left( FM, \mathfrak{so}(2) \right)} \) be the connection. Then \( K \, \mathrm{d} a = s^* \mathrm{d} \omega^1_2 \) for any local oriented orthonormal smooth frame \( s \) for \( M \), where\/ \( \mathrm{d} a \) is the (Riemannian) area form on \( M \).
\end{plem}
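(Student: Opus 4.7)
The plan is to compute both sides of the asserted equality on an oriented orthonormal frame $s = (E_1, E_2)$ at an arbitrary point and to identify them via the Riemann curvature, using the one-to-one correspondence~\eqref{E:connection} and Proposition~\ref{P:sectional-curvature}.

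First I would exploit the fact that $\mathfrak{so}(2)$ is abelian. For any $\mathfrak{so}(2)$-valued $1$-form, the matrix entries satisfy $\omega^1_1 = \omega^2_2 = 0$ and $\omega^1_2 = -\omega^2_1$, so a direct expansion yields $(\omega \wedge \omega)^1_2 = \omega^1_1 \wedge \omega^1_2 + \omega^1_2 \wedge \omega^2_2 = 0$. Hence the structure equation $\Omega = \mathrm{d}\omega + \omega \wedge \omega$ in~\eqref{E:fundamental-equations} reduces to $\Omega^1_2 = \mathrm{d}\omega^1_2$, and therefore $s^* \mathrm{d}\omega^1_2 = s^* \Omega^1_2$.

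Next I would translate $s^* \Omega^1_2$ into Riemann curvature. By the correspondence~\eqref{E:connection}, $R(X,Y) E_j = s^* \Omega^i_j(X,Y)\, E_i$; since $\Omega$ is $\mathfrak{so}(2)$-valued, $\Omega^2_2 = 0$, so $R(E_1, E_2) E_2 = s^* \Omega^1_2(E_1, E_2)\, E_1$. Taking the inner product with $E_1$ gives
\begin{align*}
s^* \mathrm{d}\omega^1_2(E_1, E_2) \;=\; s^* \Omega^1_2(E_1, E_2) \;=\; R(E_1, E_2, E_2, E_1).
\end{align*}
By Proposition~\ref{P:sectional-curvature}, the right-hand side equals $\sec(T_x M) = K$ at the basepoint $x$. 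On the other hand, because $(E_1, E_2)$ is oriented orthonormal, $\mathrm{d}a(E_1, E_2) = 1$, so $(K\, \mathrm{d}a)(E_1, E_2) = K$. Both $K\, \mathrm{d}a$ and $s^* \mathrm{d}\omega^1_2$ are $2$-forms on the $2$-manifold $M$, so agreement on a frame at each point forces agreement as $2$-forms.

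There is no substantive obstacle here: the entire argument is a bookkeeping exercise that combines the vanishing of $\omega \wedge \omega$ in the abelian case with the definitions unpacked via~\eqref{E:connection}. The only point requiring a modicum of care is the sign convention relating $R(E_1,E_2,E_2,E_1)$ to $K$ and to the oriented area form, which is already fixed by Proposition~\ref{P:sectional-curvature} and the orientation convention stated in the introduction.
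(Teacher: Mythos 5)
Your proof is correct, and it takes a different route from the paper's. You derive the identity structurally: the abelian-ness of \( \mathfrak{so}(2) \) kills the quadratic term in the structure equation \( \Omega = \mathrm{d} \omega + \omega \wedge \omega \) from~\eqref{E:fundamental-equations}, so \( s^* \mathrm{d} \omega^1_2 = s^* \Omega^1_2 \), and then the stated relation \( R(X,Y) E_j = s^* \Omega^i_j(X,Y) E_i \) together with Proposition~\ref{P:sectional-curvature} identifies \( s^* \Omega^1_2 {\left( E_1, E_2 \right)} \) with \( K \). The paper instead works entirely at the level of the Koszul connection: it sets \( \upsilon = s^* \omega^1_2 \), expands \( R {\left( E_1, E_2, E_2, E_1 \right)} \) by writing \( \del_X E_2 = \upsilon(X) E_1 \) and \( \del_X E_1 = -\upsilon(X) E_2 \), and observes that the surviving terms assemble into the invariant formula for \( \mathrm{d} \upsilon {\left( E_1, E_2 \right)} \); the vanishing of the quadratic term appears there as the orthogonality of \( \del_{E_1} E_1 \) and \( \del_{E_2} E_1 \) to \( E_1 \). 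Your version is shorter and makes the role of the abelian structure group explicit, at the cost of invoking the principal-bundle curvature correspondence as a black box; the paper's version is self-contained modulo only the definition of the connection coefficients. Both are legitimate given that the paper states the four fundamental equations and the frame relations without proof. The one cosmetic point: the relation \( R(X,Y) E_j = s^* \Omega^i_j(X,Y) E_i \) is stated in the paragraph following~\eqref{E:connection} rather than being part of~\eqref{E:connection} itself, so the citation should point there.
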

\begin{proof}
Let \( s = {\left( E_1, E_2 \right)} \) and \( \upsilon = s^* \omega^1_2 \). By Proposition~\ref{P:sectional-curvature}, we have
\begin{align*}
K & = R {\left( E_1, E_2, E_2, E_1 \right)} = {\left\langle \del_{E_1} \del_{E_2} E_2 - \del_{E_2} \del_{E_1} E_2 - \del_{{\left[ E_1, E_2 \right]}} E_2, E_1 \right\rangle} \\
& = {\left\langle \del_{E_1} {\left( \upsilon {\left( E_2 \right)} E_1 \right)} - \del_{E_2} {\left( \upsilon {\left( E_1 \right)} E_1 \right)} - \upsilon {\left( {\left[ E_1, E_2 \right]} \right)} E_1, E_1 \right\rangle} \\
& = {\left\langle \begin{aligned}
& E_1 {\left( \upsilon {\left( E_2 \right)} \right)} E_1 + \upsilon {\left( E_2 \right)} \del_{E_1} E_1 \\
& - E_2 {\left( \upsilon {\left( E_1 \right)} \right)} E_1 - \upsilon {\left( E_1 \right)} \del_{E_2} E_1 - \upsilon {\left( {\left[ E_1, E_2 \right]} \right)} E_1,
\end{aligned} E_1 \right\rangle} \\
& = E_1 {\left( \upsilon {\left( E_2 \right)} \right)} - E_2 {\left( \upsilon {\left( E_1 \right)} \right)} - \upsilon {\left( {\left[ E_1, E_2 \right]} \right)} = \mathrm{d} \upsilon {\left( E_1, E_2 \right)}.
\end{align*}
This completes the proof.
\end{proof}

\begin{plem} \label{L:gauge-invariance}
Let \( M \) be an oriented Riemannian surface. Let \( FM \surjto M \) be the oriented orthonormal frame bundle. If \( \omega, \omega' \in \Omega^1 {\left( FM, \mathfrak{so}(2) \right)} \) are two connections, then \( s^* {\left( \omega - \omega' \right)} \) does not depend on the choice of a local oriented orthonormal smooth frame \( s \) for \( M \), so it produces a well-defined form\/ \( {\left( \omega - \omega' \right)}^{\mathsf{p}} \in \Omega^1 {\left( M, \mathfrak{so}(2) \right)} \).
\end{plem}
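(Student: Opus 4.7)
The plan is to reduce the statement to the gauge-transformation formula of Lemma~\ref{L:gauge-transformation-connection} and then exploit the abelian nature of $\mathrm{SO}(2)$. Given two local oriented orthonormal smooth frames $s, s' \colon U \to FM$ on a common open set $U \subseteq M$, there exists a unique smooth map $g \colon U \to \mathrm{SO}(2)$ with $s'(x) = s(x) \cdot g(x)$ for every $x \in U$. Equivalently, $s' = \varphi \circ s$, where $\varphi \colon \pi^{-1}(U) \isoto \pi^{-1}(U)$ is the local gauge transformation defined by $\varphi(p) = p \cdot (g \circ \pi)(p)$, so that the associated map is $g_{\varphi} = g \circ \pi$ in the notation of Lemma~\ref{L:gauge-transformation-connection}.

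Applying Lemma~\ref{L:gauge-transformation-connection} to $\varphi$ and pulling the resulting identity back along $s$ (using $\pi \circ s = \id_U$), I obtain
\begin{align}
s'^* \omega & = \Ad_{g^{-1}}(s^* \omega) + g^* \mu,
\end{align}
where $\mu$ is the Maurer-Cartan form on $\mathrm{SO}(2)$, and the analogous identity with $\omega'$ replacing $\omega$. Subtracting these two equalities makes the inhomogeneous term $g^* \mu$ drop out, since it depends only on the transition $g$ and not on the chosen connection, leaving
\begin{align}
s'^*(\omega - \omega') & = \Ad_{g^{-1}}\!\bigl(s^*(\omega - \omega')\bigr).
\end{align}

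Finally, since $\mathrm{SO}(2)$ is abelian, its adjoint representation on $\mathfrak{so}(2)$ is trivial, so $s'^*(\omega - \omega') = s^*(\omega - \omega')$ on $U$. Patching over an open cover of $M$ by such trivializing neighborhoods then yields the well-defined global form $(\omega - \omega')^{\mathsf{p}} \in \Omega^1(M, \mathfrak{so}(2))$. There is no real obstacle here: the proof rests on the two clean cancellations, namely that the Maurer-Cartan correction is connection-independent (so it kills itself under subtraction) and that the residual $\Ad$-twist disappears because the structure group is commutative; the latter is the essential reason the statement is specific to surfaces.
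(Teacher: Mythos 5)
Your argument is correct and is essentially the paper's own proof: both rest on Lemma~\ref{L:gauge-transformation-connection}, the cancellation of the Maurer--Cartan term \( g^* \mu \) in the difference \( \omega - \omega' \), and the triviality of the adjoint action of the abelian group \( \mathrm{SO}(2) \) on \( \mathfrak{so}(2) \). The paper merely phrases this more compactly by computing \( \varphi^*(\omega - \omega') = \Ad_{g_{\varphi}^{-1}}(\omega - \omega') = \omega - \omega' \) directly on \( FM \), while you spell out the pullback along local sections; the content is the same.
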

\begin{proof}
For any gauge transformation \( \varphi \colon FM \isoto FM \) with the associated map \( g_{\varphi} \colon FM \to \mathrm{SO}(2) \), we have \( \varphi^* {\left( \omega - \omega' \right)} = \Ad_{g_{\varphi}^{-1}} {\left( \omega - \omega' \right)} = \omega - \omega' \) by Lemma~\ref{L:gauge-transformation-connection} and the elementary fact that \( BAB^{-1} = A \) for all \( A \in \mathfrak{so}(2) \) and \( B \in \mathrm{SO}(2) \).
\end{proof}

\begin{pprop}[The Gauss-Bonnet theorem, Riemann-Cartan version] \label{P:Gauss-Bonnet}
Let \( M \) be a closed oriented Riemann-Cartan surface. Then
\begin{align}
\int_M K \, \mathrm{d} a & = 2 \boldsymbol{\pi} \chi(M),
\end{align}
where\/ \( \mathrm{d} a \) is the (Riemannian) area form on \( M \).
\end{pprop}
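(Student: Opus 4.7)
The plan is to reduce the statement to the classical (Levi-Civita) Gauss-Bonnet theorem by showing that replacing the Riemann-Cartan connection by the Levi-Civita connection does not change $\int_M K \, \mathrm{d} a$. Let $\omega, \bar{\omega} \in \Omega^1 {\left( FM, \mathfrak{so}(2) \right)}$ denote the principal $\mathrm{O}(2)$-connections on the oriented orthonormal frame bundle $FM \surjto M$ corresponding to the Riemann-Cartan connection and to the Levi-Civita connection respectively, with associated intrinsic Gaussian curvatures $K$ and $\bar{K}$.

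For any local oriented orthonormal smooth frame $s$ for $M$, Lemma~\ref{L:Euler-class} applied to both connections yields ${\left( K - \bar{K} \right)} \mathrm{d} a = \mathrm{d} {\left( s^* {\left( \omega - \bar{\omega} \right)}^1_2 \right)}$. By Lemma~\ref{L:gauge-invariance}, $s^* {\left( \omega - \bar{\omega} \right)}$ is independent of the choice of $s$ and therefore defines a global $1$-form $\eta = {\left( \omega - \bar{\omega} \right)}{}^{\mathsf{p}} \in \Omega^1 {\left( M, \mathfrak{so}(2) \right)}$. The local identity thus patches to the global identity ${\left( K - \bar{K} \right)} \mathrm{d} a = \mathrm{d} \eta^1_2$ on all of $M$. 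Applying Stokes' theorem to the closed surface $M$ gives $\int_M K \, \mathrm{d} a - \int_M \bar{K} \, \mathrm{d} a = \int_M \mathrm{d} \eta^1_2 = 0$, and combining with the classical Riemannian Gauss-Bonnet theorem $\int_M \bar{K} \, \mathrm{d} a = 2 \boldsymbol{\pi} \chi(M)$ finishes the argument.

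There is no serious obstacle once Lemma~\ref{L:gauge-invariance} is in hand; the core content is essentially the Chern-Weil-theoretic observation that the Euler form is insensitive to the choice of metric-compatible connection, which in the $\mathrm{SO}(2)$ setting is made elementary by the fact that $\mathrm{Ad}$ acts trivially on $\mathfrak{so}(2)$. The only point requiring care is recognizing that the local exactness statement from Lemma~\ref{L:Euler-class} promotes to a globally exact form on $M$ once one passes to the difference of two connections, which is precisely what Lemma~\ref{L:gauge-invariance} delivers.
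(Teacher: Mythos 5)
Your proposal is correct and follows exactly the paper's own argument: the paper likewise combines Lemma~\ref{L:Euler-class} and Lemma~\ref{L:gauge-invariance} to write \( K \, \mathrm{d} a = \bar{K} \, \mathrm{d} a + \mathrm{d} {\left( {\left( \omega - \bar{\omega} \right)}^{\mathsf{p}} \right)}^1_2 \) globally, then invokes the classical Gauss-Bonnet theorem and Stokes' theorem. You have merely spelled out the patching step that the paper leaves implicit.
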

\begin{proof}
Note that \( K \, \mathrm{d} a = \bar{K} \, \mathrm{d} a + \mathrm{d} {\left( {\left( \omega - \bar{\omega} \right)}^{\mathsf{p}} \right)}^1_2 \) by Lemmas~\ref{L:Euler-class} and~\ref{L:gauge-invariance}. Then the equality follows from the original Gauss-Bonnet theorem and Stokes' theorem.
\end{proof}

\noindent The Chern-Gauss-Bonnet theorem originated in~\cite{MR0014760} can be generalized to any Riemannian vector bundles with any metric-compatible connection (Theorem~1 of~\cite{MR2261968}). Proposition~\ref{P:Gauss-Bonnet} agrees with a particular case of this generalization.

\subsection{The Weitzenb{\"o}ck case}

Hypersurfaces in a Weitzenb{\"o}ck manifold have a powerful tool: the \emph{Gauss map}.

\begin{pdef}[Gauss map] \label{D:Gauss-map}
Let \( M \) be an oriented smooth hypersurface embedded in an oriented Riemannian \( n \)-manifold \( \tilde{M} \), where \( M \) is oriented by the unit normal vector field \( N \) along \( M \). Let \( s = {\left( E_1, \dotsc, E_n \right)} \) be a local oriented orthonormal smooth frame for \( \tilde{M} \) on \( U \). The \emph{Gauss map} of \( M \) with respect to \( s \) is the smooth map \( n^s = {\left( N^1, \dotsc, N^n \right)} \colon U \cap M \to \mathbb{S}^{n-1} \), where \( N = N^1 E_1 + \dotsb + N^n E_n \).
\end{pdef}

\noindent In particular, if \( M \) is an oriented smooth hypersurface embedded in a Weitzenb{\"o}ck \( n \)-manifold \( \tilde{M} \), then there is a globally well-defined Gauss map \( n \colon M \to \mathbb{S}^{n-1} \). In fact, the Weitzenb{\"o}ck Weingarten map is given by the differential of the Gauss map in the following sense.

\begin{pprop} \label{P:Weingarten-map}
Let \( M \) be an oriented smooth hypersurface embedded in a Weitzenb{\"o}ck \( n \)-manifold \( \tilde{M} \) with global smooth frame \( s = {\left( E_1, \dotsc, E_n \right)} \), where \( M \) is oriented by the unit normal vector field \( N = N^1 E_1 + \dotsb + N^n E_n \) along \( M \). Then
\begin{align}
W^s & = - \sum_{i = 1}^n \mathrm{d} N^i \otimes E_i.
\end{align}
\end{pprop}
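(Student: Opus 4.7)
The claim is essentially an immediate combination of Proposition~\ref{P:Weingarten-equation} with the explicit formula for the Weitzenb{\"o}ck connection given in Definition~\ref{D:Weitzenbock-affine}, so my plan is to chain the two together. Since the Weingarten map is defined pointwise on tangent vectors of $M$, it suffices to show the identity of sections $W^s(X) = -\sum_{i=1}^n \mathrm{d} N^i(X)\, E_i$ for an arbitrary $X \in \Gamma(TM)$.

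Concretely, fix $X \in \Gamma(TM)$ and choose any smooth extension of $N$ to an open neighborhood of $M$ in $\tilde{M}$; write this extension as $N = \sum_i N^i E_i$, where the component functions $N^i$ extend the ones defined on $U \cap M$ in Definition~\ref{D:Gauss-map}. Proposition~\ref{P:Weingarten-equation}, applied to the Weitzenb{\"o}ck connection $\tilde{\del}^s$, yields $W^s(X) = -\tilde{\del}^s_X N$ along $M$. The explicit formula for the Weitzenb{\"o}ck connection in Definition~\ref{D:Weitzenbock-affine} then gives $\tilde{\del}^s_X N = \sum_i X(N^i)\, E_i = \sum_i \mathrm{d} N^i(X)\, E_i$. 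Because $X$ is tangent to $M$, the derivatives $X(N^i)$ depend only on the restrictions of the $N^i$ to $M$, so the answer is independent of the chosen extension. Since $X$ was arbitrary, this is exactly the tensorial identity $W^s = -\sum_i \mathrm{d} N^i \otimes E_i$.

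There is essentially no obstacle here; the only point requiring a brief check is that the right-hand side, a priori a section of $T\tilde{M}|_M$, actually lies in $TM$. This is automatic from the unit-length condition: differentiating $\sum_i (N^i)^2 = 1$ in the direction $X$ yields $\sum_i N^i\, X(N^i) = 0$, so $\sum_i X(N^i) E_i$ is orthogonal to $N$ and hence tangent to $M$. This is of course the same observation that underlies Proposition~\ref{P:Weingarten-equation} in the first place, so no additional work is needed.
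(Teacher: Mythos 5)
Your proposal is correct and follows exactly the paper's route: the paper also derives this as an immediate corollary of Proposition~\ref{P:Weingarten-equation} combined with the explicit formula for the Weitzenb{\"o}ck connection in Definition~\ref{D:Weitzenbock-affine}. Your additional remarks on extension-independence and tangency of the right-hand side are sound and merely make explicit what the paper leaves implicit.
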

\begin{proof}
This is an immediate corollary of Proposition~\ref{P:Weingarten-equation}.
\end{proof}

\begin{pcor}
In the setup of Proposition~\ref{P:Weingarten-map}, \( M \) is totally geodesic in \( \tilde{M} \) iff the Gauss map \( n^s \colon M \to \mathbb{S}^{n-1} \) is constant on each component of \( M \).
\end{pcor}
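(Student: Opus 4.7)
The plan is to chain together the definitions and the identity from Proposition~\ref{P:Weingarten-map}. First I would observe that $M$ being totally geodesic in $\tilde{M}$ means $\hat{\II} = 0$ everywhere (Definition~\ref{D:SFF}), which by the codimension-one relation $\hat{\II}(X,Y) = \II(X,Y) N$ (Definition~\ref{D:SSFF}) is equivalent to $\II \equiv 0$. Since the Weingarten map $W^s$ is defined by $\II(X,Y) = \langle W^s(X), Y\rangle_M$ and the induced metric is nondegenerate, this in turn is equivalent to $W^s \equiv 0$.

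Next I would invoke Proposition~\ref{P:Weingarten-map}, which gives
\begin{align*}
W^s & = - \sum_{i=1}^n \mathrm{d} N^i \otimes E_i.
\end{align*}
Because the vectors $E_1, \dotsc, E_n$ are pointwise linearly independent (they form a global frame for $\tilde{M}$), the vanishing of $W^s$ at a point $x \in M$ is equivalent to the vanishing of each $\mathrm{d} N^i |_x$, i.e., to the vanishing of the differential of the Gauss map $n^s = (N^1, \dotsc, N^n) \colon M \to \mathbb{S}^{n-1}$ at $x$.

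Finally, I would conclude by the elementary fact that a smooth map between manifolds has identically zero differential if and only if it is locally constant, and a locally constant map is constant on each connected component of its domain. Stringing the equivalences together yields the claim. There is no substantive obstacle here; the corollary is essentially a direct translation of Proposition~\ref{P:Weingarten-map} via the chain $\text{totally geodesic} \Leftrightarrow \hat{\II}=0 \Leftrightarrow \II=0 \Leftrightarrow W^s=0 \Leftrightarrow \mathrm{d} n^s=0 \Leftrightarrow n^s \text{ locally constant}$.
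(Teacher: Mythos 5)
Your proposal is correct and follows exactly the same route as the paper's (very terse) proof: totally geodesic $\Leftrightarrow \II^s = 0 \Leftrightarrow W^s = 0 \Leftrightarrow \mathrm{d} N^i = 0$ for all $i$, via Proposition~\ref{P:Weingarten-map}. You simply spell out the intermediate justifications (nondegeneracy of the metric, pointwise linear independence of the frame, locally constant $\Leftrightarrow$ zero differential) that the paper leaves implicit.
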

\begin{proof}
Note that \( \II^s = 0 \) iff \( W^s = 0 \) iff \( \mathrm{d} N^i = 0 \) for all \( i \in \{ 1, \dotsc, n \} \).
\end{proof}

\begin{pcor}
In the setup of Proposition~\ref{P:Weingarten-map}, suppose that \( s' \) is another global smooth frame for \( \tilde{M} \) such that \( s = s' \) along \( M \). Then
\begin{align}
\begin{aligned}
W^s & = W^{s'}, & \quad \II^s & = \II^{s'}, & \quad \III^s & = \III^{s'}, \\
\tau^s & = \tau^{s'}, & \quad {\left( K^{\mathrm{e}} \right)}{}^s & = {\left( K^{\mathrm{e}} \right)}{}^{s'}, & \quad H^s & = H^{s'},
\end{aligned}
\end{align}
where the superscripts denote the corresponding Weitzenb{\"o}ck structure.
\end{pcor}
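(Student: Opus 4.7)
The plan is to show first that the Weingarten maps agree, from which all the remaining equalities follow essentially formally.

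First, I would observe that the unit normal vector field \( N \) along \( M \) is determined by the orientations of \( \tilde M \) and \( M \) together with the induced Riemannian metric on \( \tilde M \); it does not depend on the choice of a frame. Since the Riemannian metric and the orientation are the same for the two Weitzenböck structures (indeed, both are determined by \( s|_M = s'|_M \) together with \( s|_{\tilde M \setminus M} \) versus \( s'|_{\tilde M \setminus M} \), but the metric and orientation on \( \tilde M \) coincide where \( s \) and \( s' \) coincide, and more importantly they coincide pointwise along \( M \)), the vector field \( N \) is the same object in both structures. Writing \( N = N^i E_i = (N')^i E'_i \) and using \( E_i = E'_i \) along \( M \), we get \( N^i = (N')^i \) on \( M \); hence \( \mathrm{d} N^i \) and \( \mathrm{d} (N')^i \) agree as \( 1 \)-forms on \( M \).

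Next I would apply Proposition~\ref{P:Weingarten-map} to both frames. It gives
\begin{align*}
W^s \;=\; -\sum_{i=1}^{n} \mathrm{d} N^i \otimes E_i \;=\; -\sum_{i=1}^{n} \mathrm{d} (N')^i \otimes E'_i \;=\; W^{s'}
\end{align*}
as \( (1,1) \)-tensor fields on \( M \), since both \( \mathrm{d} N^i \) (restricted to \( TM \)) and \( E_i \) (restricted to \( M \)) coincide for the two frames. From this single equality, the formulas \( \II^s = \II^{s'} \), \( \III^s = \III^{s'} \), \( (K^{\mathrm{e}})^s = (K^{\mathrm{e}})^{s'} \), and \( H^s = H^{s'} \) all follow immediately from their definitions in Definition~\ref{D:Weingarten-map} together with the fact that the induced Riemannian metric on \( M \) is the same in both structures.

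For the torsion \( 2 \)-form, rather than computing the ambient torsions \( \tilde T^s \) and \( \tilde T^{s'} \) directly (which differ off \( M \) in general), I would invoke the identity \( \II(X,Y) - \II(Y,X) = \tau(X,Y) \) established in Proposition~\ref{P:SSFF}. Once we know \( \II^s = \II^{s'} \), antisymmetrization yields \( \tau^s = \tau^{s'} \) for free. This is a clean way to avoid any calculation involving the nonholonomic structure of the two Weitzenböck connections away from \( M \). I do not expect a serious obstacle here; the only point requiring some care is checking that \( N \) and hence the scalar coefficients \( N^i \) really are intrinsic to \( M \) and \( s|_M \), but this is immediate from the definition of the Gauss map in Definition~\ref{D:Gauss-map}.
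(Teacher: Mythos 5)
Your proposal is correct and follows essentially the same route as the paper, whose proof simply notes that \( W^s = W^{s'} \) (via Proposition~\ref{P:Weingarten-map}) and that everything else follows; your write-up just makes explicit the points the paper leaves implicit (that \( N \) and the induced metric depend only on \( s|_M = s'|_M \), and that \( \tau \) is recovered from \( \II \) by antisymmetrization as in Proposition~\ref{P:SSFF}).
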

\begin{proof}
We know \( W^s = W^{s'} \). All the others follow from this.
\end{proof}

The Weitzenb{\"o}ck mean curvature and torsion \( 2 \)-form are given as follows.

\begin{pprop} \label{P:H}
In the setup of Proposition~\ref{P:Weingarten-map}, let \( \iota \colon M \injto \tilde{M} \) be the inclusion and let\/ \( {\left( \varepsilon^1, \dotsc, \varepsilon^n \right)} \) be the dual of \( s \). Then
\begin{align}
H^s & = - \sum_{i = 1}^n E_i^{\top} {\left( N^i \right)} \quad \text{and} \quad \tau^s = \sum_{i = 1}^n N^i \iota^* \mathrm{d} \varepsilon^i.
\end{align}
\end{pprop}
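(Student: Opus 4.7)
The plan is to prove the two identities separately; each is a direct unpacking of the definitions together with one structural input from the Weitzenb{\"o}ck setup.

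For the mean curvature identity $H^s = -\sum_i E_i^{\top}(N^i)$, I would start from Proposition~\ref{P:Weingarten-map}, which gives $W^s(X) = -\sum_i X(N^i)\, E_i$ for $X \in \Gamma(TM)$. Differentiating $|N|^2 = 1$ along $X$ yields $\sum_i X(N^i)\, N^i = 0$, so decomposing $E_i = E_i^{\top} + N^i N$ shows that the normal contribution in $W^s(X)$ cancels and leaves $W^s(X) = -\sum_i X(N^i)\, E_i^{\top}$, which is manifestly tangent to $M$. To compute the trace, I would pick any local orthonormal tangent frame $\{F_1,\dotsc,F_{n-1}\}$ for $M$, producing $H^s = \tr W^s = -\sum_{i,a} F_a(N^i)\,\langle E_i, F_a\rangle$. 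Interchanging the order of summation and invoking the orthogonal-projection identity $E_i^{\top} = \sum_a \langle E_i, F_a\rangle F_a$, the inner sum collapses to the directional derivative $E_i^{\top}(N^i)$, giving the claimed formula.

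For the torsion $2$-form identity $\tau^s = \sum_i N^i \iota^* \mathrm{d}\varepsilon^i$, I would combine the definition $\tau^s(X,Y) = \langle N, \tilde{T}^s(X,Y)\rangle$ with the first of the fundamental structure equations~\eqref{E:fundamental-equations}, namely $\mathrm{d}\theta + \omega \wedge \theta = \Theta$. Pulling this back along $s$ and invoking $s^*\omega^s = 0$, which follows from Proposition~\ref{P:Weitzenbock}(a), reduces the equation to $\mathrm{d}\varepsilon^i = s^*\Theta^i$, where $\varepsilon^i = s^*\theta^i$ in view of the convention $X = s^*\theta^i(X)\, E_i$. Since $s^*\Theta^i(X,Y) = \varepsilon^i(\tilde{T}^s(X,Y))$, expanding $\tilde{T}^s(X,Y) = \sum_i \varepsilon^i(\tilde{T}^s(X,Y))\, E_i$ and pairing with $N = \sum_i N^i E_i$ gives $\langle N, \tilde{T}^s(X,Y)\rangle = \sum_i N^i\, \mathrm{d}\varepsilon^i(X,Y)$ for $X, Y$ tangent to $M$, and restriction via $\iota^*$ finishes the identification.

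Both parts are essentially bookkeeping rather than substantive geometry, so no serious obstacle is anticipated. The only subtle points are (i) recognizing that the naive expression $-\sum_i X(N^i) E_i$ in fact lands in $TM$ because of the normalization $|N|^2 = 1$, which is what lets the trace be computed entirely on $M$, and (ii) using the Weitzenb{\"o}ck condition $s^*\omega^s = 0$ to collapse the first structure equation into the clean identity $\mathrm{d}\varepsilon^i = s^*\Theta^i$ that directly feeds the torsion computation.
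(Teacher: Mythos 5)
Your proposal is correct and follows essentially the same route as the paper: the mean curvature identity is obtained by tracing \( W^s(X) = -\sum_i X(N^i) E_i \) against a tangent orthonormal frame and collapsing via \( E_i^{\top} = \sum_a \langle E_i, F_a\rangle F_a \), exactly as in the paper's proof. For the torsion identity you invoke the first structure equation with \( s^*\omega^s = 0 \) to get \( \mathrm{d}\varepsilon^i = s^*\Theta^i \), which is just a repackaging of the paper's direct computation \( \mathrm{d}\varepsilon^k(E_i,E_j) = -\varepsilon^k([E_i,E_j]) = \varepsilon^k(\tilde{T}^s(E_i,E_j)) \); both are sound.
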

\begin{proof}
If \( {\left( \bar{E}_1, \dotsc, \bar{E}_{n-1} \right)} \) is a local oriented orthonormal smooth frame for \( M \),
\begin{align*}
H^s & = \tr W^s = \sum_{j = 1}^{n-1} {\left\langle \bar{E}_j, - \sum_{i = 1}^n \bar{E}_j {\left( N^i \right)} E_i \right\rangle} = - \sum_{i = 1}^n \sum_{j = 1}^{n-1} {\left\langle \bar{E}_j, E_i \right\rangle} \bar{E}_j {\left( N^i \right)}.
\end{align*}
Here, \( E_i^{\top} = \sum_{j = 1}^{n-1} {\left\langle \bar{E}_j, E_i \right\rangle} \bar{E}_j \). The former follows. For all \( i, j \in \{ 1, \dotsc, n \} \),
\begin{align*}
\sum_{k = 1}^n N^k \, \mathrm{d} \varepsilon^k {\left( E_i, E_j \right)} & = - \sum_{k = 1}^n N^k \varepsilon^k {\left( {\left[ E_i, E_j \right]} \right)} = - {\left\langle N, {\left[ E_i, E_j \right]} \right\rangle} = {\big\langle N, \tilde{T}^s {\left( E_i, E_j \right)} \big\rangle}.
\end{align*}
The latter follows.
\end{proof}

The Gaussian curvature for the Weitzenb{\"o}ck case is given as follows.

\begin{pprop}
Let \( M \) be an oriented smooth surface embedded in a Weitzen\-b{\"o}ck\/ \( 3 \)-manifold \( \tilde{M} \) with global smooth frame \( s = {\left( E_1, E_2, E_3 \right)} \), where \( M \) is oriented by the unit normal vector field \( N = N^1 E_1 + N^2 E_2 + N^3 E_3 \) along \( M \). Then
\begin{align}
K \, \mathrm{d} a & = K^{\mathrm{e}} \, \mathrm{d} a = \sum_{\mathrm{cyc}} N^i \, \mathrm{d} N^j \wedge \mathrm{d} N^k = {\left( n^s \right)}^* \, \mathrm{d} a_{\mathbb{S}^2},
\end{align}
where\/ \( \mathrm{d} a \) is the (Riemannian) area form on \( M \) and\/ \( \mathrm{d} a_{\mathbb{S}^2} \) is the standard area form on the unit sphere\/ \( \mathbb{S}^2 \).
\end{pprop}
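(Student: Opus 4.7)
I would establish the three equalities in order from left to right. The first equality $K = K^{\mathrm{e}}$ is an immediate consequence of the Riemann-Cartan Theorema Egregium (Proposition~\ref{P:TE}): by Proposition~\ref{P:Weitzenbock}(c), the Weitzenb{\"o}ck connection $\tilde{\del}^s$ is flat, so the ambient $\tilde{M}$ is a flat Riemann-Cartan $3$-manifold and thus $K^{\mathrm{e}} = \frac{1}{2} \Scal = K$ on $M$.

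For the rightmost equality, I would recall the standard description of the area form on $\mathbb{S}^2$ as the interior product of the Euclidean volume form on $\mathbb{R}^3$ with the outward radial vector field, which yields $\mathrm{d} a_{\mathbb{S}^2} = \sum_{\mathrm{cyc}} x^i \, \mathrm{d} x^j \wedge \mathrm{d} x^k$ when restricted to $\mathbb{S}^2$, where $(x^1, x^2, x^3)$ are the standard coordinates on $\mathbb{R}^3$. Pulling back via $n^s = (N^1, N^2, N^3) \colon M \to \mathbb{S}^2$ gives the claimed identity $(n^s)^* \, \mathrm{d} a_{\mathbb{S}^2} = \sum_{\mathrm{cyc}} N^i \, \mathrm{d} N^j \wedge \mathrm{d} N^k$ at once.

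The main step is the middle equality $K^{\mathrm{e}} \, \mathrm{d} a = \sum_{\mathrm{cyc}} N^i \, \mathrm{d} N^j \wedge \mathrm{d} N^k$, which I would verify pointwise. Fix a point of $M$ and a local oriented orthonormal smooth frame $(\bar{E}_1, \bar{E}_2)$ for $M$ near it; then $\mathrm{d} a(\bar{E}_1, \bar{E}_2) = 1$, so the LHS evaluates to $K^{\mathrm{e}} = \det W^s$. Expanding the cyclic sum, the RHS evaluates to the $3 \times 3$ determinant whose rows are $(N^1, N^2, N^3)$, $(\bar{E}_1 N^1, \bar{E}_1 N^2, \bar{E}_1 N^3)$, and $(\bar{E}_2 N^1, \bar{E}_2 N^2, \bar{E}_2 N^3)$. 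By Proposition~\ref{P:Weingarten-map}, the last two rows are the components of $-W^s(\bar{E}_1)$ and $-W^s(\bar{E}_2)$ with respect to the ambient frame $(E_1, E_2, E_3)$, so this determinant is the Euclidean triple product $N \cdot {\left( W^s(\bar{E}_1) \times W^s(\bar{E}_2) \right)}$. Since $\langle W^s(\bar{E}_a), N \rangle = -\tfrac{1}{2} \bar{E}_a \langle N, N \rangle = 0$, each $W^s(\bar{E}_a)$ already lies in $T_x M$, so the wedge-square identity on the $2$-dimensional space $T_x M$ yields $W^s(\bar{E}_1) \times W^s(\bar{E}_2) = (\det W^s)(\bar{E}_1 \times \bar{E}_2) = (\det W^s)\, N$, where the last equality uses the convention that $(N, \bar{E}_1, \bar{E}_2)$ is oriented. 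Contracting with $N$ then gives $\det W^s = K^{\mathrm{e}}$, as required.

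The only delicate point is orientation bookkeeping in the last step (confirming $\bar{E}_1 \times \bar{E}_2 = +N$ rather than $-N$, and that $W^s(\bar{E}_a)$ is tangent so that the standard wedge-square identity applies). Beyond this sign check, the computation is routine linear algebra.
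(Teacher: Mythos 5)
Your proposal is correct and follows essentially the same route as the paper: the first equality via the Riemann--Cartan Theorema Egregium applied to the flat Weitzenb{\"o}ck ambient, the last by pulling back the standard expression for \( \mathrm{d} a_{\mathbb{S}^2} \), and the middle by evaluating both \( 2 \)-forms on a local oriented orthonormal frame \( {\left( \bar{E}_1, \bar{E}_2 \right)} \) and reducing to \( \det W^s \). Your packaging of that last step as the triple product \( N \cdot {\left( W^s \bar{E}_1 \times W^s \bar{E}_2 \right)} = {\left( \det W^s \right)} {\left\langle N, \bar{E}_1 \times \bar{E}_2 \right\rangle} \) is just a cleaner restatement of the paper's cofactor identity \( g^1_i g^2_j - g^1_j g^2_i = g^3_k \) for the \( \mathrm{SO}(3) \) transition matrix, and your orientation check \( \bar{E}_1 \times \bar{E}_2 = N \) is consistent with the paper's convention that \( {\left( N, \bar{E}_1, \bar{E}_2 \right)} \) is oriented.
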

\begin{proof}
The first equality follows from Proposition~\ref{P:TE}. For the second equality, let \( {\left( \bar{E}_1, \bar{E}_2 \right)} \) be a local oriented orthonormal smooth frame for \( M \). Let \( g \) be the \( \mathrm{SO}(3) \)-valued smooth map given by \( {\left( E_1, E_2, E_3 \right)} = {\left( \bar{E}_1, \bar{E}_2, N \right)} \cdot g \). By Proposition~\ref{P:Weingarten-map},
\begin{align*}
{\left( W^s \right)}^i_j & = {\left\langle \bar{E}_i, - \sum_{k = 1}^3 \mathrm{d} N^k {\left( \bar{E}_j \right)} E_k \right\rangle} = - \sum_{k = 1}^3 g^i_k \, \mathrm{d} N^k {\left( \bar{E}_j \right)}
\end{align*}
relative to \( {\left( \bar{E}_1, \bar{E}_2 \right)} \) for all \( i, j \in \{ 1, 2 \} \), so that
\begin{align*}
K^{\mathrm{e}} & = \det W^s = \sum_{i, j = 1}^3 g^1_i \, \mathrm{d} N^i \wedge g^2_j \, \mathrm{d} N^j {\left( \bar{E}_1, \bar{E}_2 \right)} \\
& = \sum_{\substack{i, j, k \\ \mathrm{cyc}}} {\left( g^1_i g^2_j - g^1_j g^2_i \right)} \, \mathrm{d} N^i \wedge \mathrm{d} N^j {\left( \bar{E}_1, \bar{E}_2 \right)} = \sum_{\mathrm{cyc}} g^3_k \, \mathrm{d} N^i \wedge \mathrm{d} N^j {\left( \bar{E}_1, \bar{E}_2 \right)}.
\end{align*}
Here, \( g^3_k = N^k \). For the last equality, observe that
\begin{align*}
\sum_{\mathrm{cyc}} N^i \, \mathrm{d} N^j \wedge \mathrm{d} N^k & = {\left( n^s \right)}^* \iota_{\mathbb{S}^2}^* {\left( x \, \mathrm{d} y \wedge \mathrm{d} z + y \, \mathrm{d} z \wedge \mathrm{d} x + z \, \mathrm{d} x \wedge \mathrm{d} y \right)} = {\left( n^s \right)}^* \, \mathrm{d} a_{\mathbb{S}^2},
\end{align*}
where \( \iota_{\mathbb{S}^2} \colon \mathbb{S}^2 \injto \mathbb{R}^3 \) is the inclusion.
\end{proof}

\noindent The Gauss-Bonnet theorem (Proposition~\ref{P:Gauss-Bonnet}) yields the following topological characterization of the Gauss map.

\begin{pcor}
Let \( M \) be a closed oriented smooth surface embedded in a Weitzen\-b{\"o}ck\/ \( 3 \)-manifold. Let \( n \colon M \to \mathbb{S}^2 \) be the Gauss map. Then\/ \( 2 \deg n = \chi(M) \).
\end{pcor}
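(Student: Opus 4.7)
The plan is to combine the Gauss–Bonnet theorem in its Riemann–Cartan version (Proposition~\ref{P:Gauss-Bonnet}) with the geometric interpretation of the Gaussian curvature as the pullback of the spherical area form, which was established in the proposition immediately preceding the corollary. Since a closed oriented surface embedded in a Weitzenb\"ock $3$-manifold inherits an oriented Riemann--Cartan structure (hence the hypotheses of Proposition~\ref{P:Gauss-Bonnet} are met), both ingredients apply.

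Concretely, I would first write
\begin{align*}
2 \boldsymbol{\pi} \chi(M) & = \int_M K \, \mathrm{d} a = \int_M {\left( n \right)}^* \mathrm{d} a_{\mathbb{S}^2},
\end{align*}
using Proposition~\ref{P:Gauss-Bonnet} for the first equality and the identity $K \, \mathrm{d} a = {\left( n^s \right)}^* \mathrm{d} a_{\mathbb{S}^2}$ from the preceding proposition for the second. Then I would invoke the standard topological characterization of degree for smooth maps between closed oriented manifolds of the same dimension: for any top-degree form $\omega$ on $\mathbb{S}^2$, $\int_M n^* \omega = (\deg n) \int_{\mathbb{S}^2} \omega$. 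Applying this to $\omega = \mathrm{d} a_{\mathbb{S}^2}$ and using $\int_{\mathbb{S}^2} \mathrm{d} a_{\mathbb{S}^2} = 4 \boldsymbol{\pi}$ yields $\int_M n^* \mathrm{d} a_{\mathbb{S}^2} = 4 \boldsymbol{\pi} \deg n$.

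Combining the two displays gives $2 \boldsymbol{\pi} \chi(M) = 4 \boldsymbol{\pi} \deg n$, i.e., $2 \deg n = \chi(M)$. There is no real obstacle here: both nontrivial ingredients have been proved in the excerpt, and the only additional fact needed is the classical degree formula for the pullback of a top form, which is a standard piece of differential topology. The main thing to check for rigor is simply that $M$ closed oriented embedded in an oriented Weitzenb\"ock $3$-manifold makes $n \colon M \to \mathbb{S}^2$ a smooth map between closed oriented $2$-manifolds, so that $\deg n$ is well defined.
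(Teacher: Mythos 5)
Your argument is correct and is essentially identical to the paper's proof: both apply the Riemann--Cartan Gauss--Bonnet theorem (Proposition~\ref{P:Gauss-Bonnet}), the identity \( K \, \mathrm{d} a = n^* \, \mathrm{d} a_{\mathbb{S}^2} \) from the preceding proposition, and the standard degree formula to conclude \( 2 \boldsymbol{\pi} \chi(M) = 4 \boldsymbol{\pi} \deg n \). No further comment is needed.
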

\begin{proof}
We have
\begin{align}
2 \boldsymbol{\pi} \chi(M) & = \int_M K \, \mathrm{d} a = \int_M n^* \, \mathrm{d} a_{\mathbb{S}^2} = {\left( \deg n \right)} \int_{\mathbb{S}^2} \mathrm{d} a_{\mathbb{S}^2} = 4 \boldsymbol{\pi} \deg n
\end{align}
by Proposition~\ref{P:Gauss-Bonnet}.
\end{proof}

\noindent In the Euclidean case (i.e., if \( M \) is embedded in \( \mathbb{R}^3 \)), this is a well-known classical result. In fact, historically, the degree of the Gauss map was a key in proving the classical Gauss-Bonnet theorem (see Section~3 in Chapter~5 of~\cite{MR0448362}).

\section{Surfaces in a Riemann-Cartan 3-manifold} \label{S:surfaces}

In this section, we focus on surfaces in a Riemann-Cartan \( 3 \)-manifold. Throughout this section, we look at an oriented smooth surface \( S \) embedded in an oriented Riemann-Cartan \( 3 \)-manifold \( M \). The main interest of this section is the mean curvature and torsion \( 2 \)-form in Riemann-Cartan geometry. We will encounter a number of results addressing the following complex-valued quantity on surfaces.

\begin{pdef} \label{D:H}
We define a complex-valued smooth scalar field \( \boldsymbol{H} \) on \( S \) by
\begin{align}
\boldsymbol{H} & = H + \boldsymbol{i} {\star} \tau,
\end{align}
where \( H \) is the mean curvature of \( S \) (Definition~\ref{D:Weingarten-map}) and \( {\star} \tau \) is the \( S \)-Hodge dual of the torsion \( 2 \)-form of \( S \) (Definition~\ref{D:torsion-form}) in Riemann-Cartan geometry.
\end{pdef}

\subsection{The hat map}

For oriented Riemannian \( 3 \)-manifolds, there is a useful tool called the \emph{hat map}, which often appears in physics and engineering literature, e.g.,~\cite{MR1300410}. It provides a natural identification between \( \mathbb{R}^3 \) and \( \mathfrak{so}(3) \).

\begin{pdef}
The \emph{cross product matrix} of \( a = {\left( a^1, a^2, a^3 \right)} \in \mathbb{R}^3 \) is the matrix \( \hat{a} \in \mathfrak{so}(3) \) corresponding to the linear map \( \mathbb{R}^3 \to \mathbb{R}^3 \) defined by \( x \mapsto a \times x \), i.e.,
\begin{align}
\hat{a} & = \begin{pmatrix*}[c]
0 & -a^3 & a^2 \\
a^3 & 0 & -a^1 \\
-a^2 & a^1 & 0
\end{pmatrix*}.
\end{align}
The \emph{hat map} is the real vector space isomorphism \( \mathbb{R}^3 \to \mathfrak{so}(3) \) given by \( a \mapsto \hat{a} \).
\end{pdef}

\noindent The standard basis for \( \mathbb{R}^3 \) is mapped to the following basis for \( \mathfrak{so}(3) \).
\begin{align}
L_1 & = \begin{pmatrix*}[r]
0 & 0 & 0 \\
0 & 0 & -1 \\
0 & 1 & 0
\end{pmatrix*}, \qquad L_2 = \begin{pmatrix*}[r]
0 & 0 & 1 \\
0 & 0 & 0 \\
-1 & 0 & 0
\end{pmatrix*}, \qquad L_3 = \begin{pmatrix*}[r]
0 & -1 & 0 \\
1 & 0 & 0 \\
0 & 0 & 0
\end{pmatrix*}. \label{E:L}
\end{align}
The standard inner product on \( \mathbb{R}^3 \) corresponds to \( - \frac{1}{2} \tr \), i.e.,
\begin{align}
a \cdot b & = - \frac{1}{2} \tr \! {\big( \hat{a} \hat{b} \big)} \quad \text{for all} \ a, b \in \mathbb{R}^3.
\end{align}

Some elementary properties of the hat map are listed in the following.

\begin{plem} \label{L:hat}
Let \( a, b \in \mathbb{R}^3 \).
\begin{enumerate}
\item \( \hat{a} b = a \times b = - b \times a = - \hat{b} a \) and \( \widehat{a \times b} = \hat{a} \hat{b} - \hat{b} \hat{a} \).
\item \( \hat{a} \hat{b} \hat{a} = - {\left( a \cdot b \right)} \hat{a} \). If \( a \) is unit, then \( \hat{a}^3 = - \hat{a} \).
\item \( \widehat{Ab} = A \hat{b} A^{-1} \) for any \( A \in \mathrm{SO}(3) \).
\item \( \widehat{Ab} = A \hat{b} - \hat{b} A \) for any \( A \in \mathfrak{so}(3) \).
\end{enumerate}
\end{plem}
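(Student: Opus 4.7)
The plan is to prove the four identities largely by unwinding the definition of the hat map and leveraging the standard vector triple product / Jacobi identity for the cross product on \( \mathbb{R}^3 \). None of the four parts is independently deep; the main task is to sequence them so that later items can reuse earlier ones.

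I would begin with part~(a). The very first chain of equalities \( \hat{a} b = a \times b = -b \times a = -\hat{b} a \) is immediate from the definition of \( \hat{a} \) as the matrix of \( x \mapsto a \times x \) together with the antisymmetry of the cross product. For the second identity in~(a), I would apply both sides to an arbitrary \( x \in \mathbb{R}^3 \): on one hand \( \widehat{a \times b} \, x = (a \times b) \times x \), and on the other hand \( (\hat{a} \hat{b} - \hat{b} \hat{a}) x = a \times (b \times x) - b \times (a \times x) \). These are equal by the Jacobi identity for the cross product (equivalently, by expanding both triple products via \( u \times (v \times w) = (u \cdot w) v - (u \cdot v) w \) and comparing).

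For part~(b), I would evaluate \( \hat{a} \hat{b} \hat{a} \) on a test vector \( x \): repeated use of the vector triple product gives \( b \times (a \times x) = (b \cdot x) a - (a \cdot b) x \), and applying \( a \times (\,\cdot\,) \) to this kills the first term (since \( a \times a = 0 \)) and leaves \( -(a \cdot b)(a \times x) = -(a \cdot b) \hat{a} x \). Specializing to \( b = a \) with \( a \) unit gives \( \hat{a}^3 = -\hat{a} \). For part~(c), the key input is the rotation-equivariance of the cross product: \( (Ab) \times (Ax) = A(b \times x) \) for \( A \in \mathrm{SO}(3) \), which I would justify by orientation preservation and the identity \( A^{-1} = A^{\top} \). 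This gives \( \widehat{Ab}(Ax) = A \hat{b} x \) for all \( x \), whence \( \widehat{Ab} = A \hat{b} A^{-1} \). Finally, for~(d), since the hat map is a linear isomorphism \( \mathbb{R}^3 \to \mathfrak{so}(3) \), I can write \( A = \hat{a} \) for a unique \( a \in \mathbb{R}^3 \); then \( A b = \hat{a} b = a \times b \), so \( \widehat{Ab} = \widehat{a \times b} = \hat{a} \hat{b} - \hat{b} \hat{a} = A \hat{b} - \hat{b} A \) by the second identity in~(a).

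I do not expect any serious obstacle: every step is either the defining property of \( \hat{a} \), the Jacobi / triple-product identity, or the fact that the hat map is a linear bijection. The only place to be careful is part~(c), where one should note that the identity \( (Ab) \times (Ax) = A(b \times x) \) requires \( A \in \mathrm{SO}(3) \) (not merely \( \mathrm{O}(3) \)) so that orientation, and hence the sign of the cross product, is preserved. With that caveat the proof is essentially a short computation.
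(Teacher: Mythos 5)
Your proposal is correct, and for parts~(a), (b), and~(d) it follows essentially the same path as the paper: (a) by unwinding the definition and the triple-product/Jacobi identity, (b) by evaluating \( \hat{a} \hat{b} \hat{a} \) on a test vector via \( b \times {\left( a \times x \right)} = {\left( b \cdot x \right)} a - {\left( a \cdot b \right)} x \), and (d) by writing \( A = \hat{a} \) and invoking the commutator identity from~(a). The one place where you genuinely diverge is part~(c). The paper proves \( \widehat{Ab} = A \hat{b} A^{-1} \) by a coordinate computation: it first notes that \( A \hat{b} A^{-1} = \Ad_A \hat{b} \in \mathfrak{so}(3) \), so only the entries \( (i,j) \in {\left\{ (1,2), (2,3), (3,1) \right\}} \) need to be checked, and then uses the cofactor identity \( A^i_{\ell} A^j_m - A^i_m A^j_{\ell} = A^k_n \) (valid for \( A \in \mathrm{SO}(3) \), where the adjugate equals the transpose) to reduce each entry to \( - (Ab)^k \). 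You instead argue coordinate-freely via the equivariance \( {\left( Ab \right)} \times {\left( Ax \right)} = A {\left( b \times x \right)} \), which follows from \( {\left\langle u \times v, w \right\rangle} = \det {\left( u, v, w \right)} \) together with \( \det A = 1 \) and \( A^{-1} = A^{\top} \); this immediately gives \( \widehat{Ab} \, A = A \hat{b} \). Both arguments rest on the same underlying fact about special orthogonal matrices (cofactors equal entries), but your version is cleaner and makes the conceptual content---that the cross product is an \( \mathrm{SO}(3) \)-equivariant operation---explicit, whereas the paper's entry-wise check is more self-contained at the level of matrix algebra. Your caveat that \( A \in \mathrm{SO}(3) \) rather than \( \mathrm{O}(3) \) is needed (the sign of the cross product flips under orientation reversal) is exactly the right point to flag.
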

\begin{proof}
(a) is elementary. (b) follows from the following observation.
\begin{align*}
a \times {\left( b \times {\left( a \times x \right)} \right)} & = a \times {\left( {\left( x \cdot b \right)} a - {\left( a \cdot b \right)} x \right)} = - {\left( a \cdot b \right)} a \times x \quad \text{for any} \ x \in \mathbb{R}^3.
\end{align*}
For~(c), let \( A \in \mathrm{SO}(3) \). Since \( A \hat{b} A^{-1} = \Ad_A \hat{b} \in \mathfrak{so}(3) \), it suffices to compare the entries \( (i,j) \in \{ (1,2), (2,3), (3,1) \} \). For these \( (i,j) \),
\begin{align*}
{\big( A \hat{b} A^{-1} \big)}^i_j & = \sum_{\ell, m = 1}^3 A^i_{\ell} \hat{b}^{\ell}_m A^j_m = \sum_{\substack{\ell, m, n \\ \mathrm{cyc}}} {\left( A^i_{\ell} A^j_m - A^i_m A^j_{\ell} \right)} \hat{b}^{\ell}_m = \sum_{n = 1}^3 - A^k_n b^n = - (Ab)^k,
\end{align*}
where \( k \in \{ 1, 2, 3 \} \setminus \{ i, j \} \). (c) follows. (d) follows from~(a).
\end{proof}

One of the major applications of the hat map is \emph{Rodrigues' rotation formula}. Every matrix in \( \mathrm{SO}(3) \) has an axis-angle representation. The spatial rotation of the angle \( \theta \in \mathbb{R} / 2 \boldsymbol{\pi} \mathbb{Z} \) about the axis \( e \in \mathbb{S}^2 \) is given by the following matrix.
\begin{align}
\boldsymbol{e}^{\theta \hat{e}} & = I + \sin(\theta) \hat{e} + {\left( 1 - \cos(\theta) \right)} \hat{e}^2 \in \mathrm{SO}(3),
\end{align}
where \( I \) is the identity matrix. We state one lemma for later use.

\begin{plem} \label{L:rotation}
Let \( U \) be an open set in\/ \( \mathbb{R}^n \). Let \( \theta \colon U \to \mathbb{R} / 2 \boldsymbol{\pi} \mathbb{Z} \) and \( e \colon U \to \mathbb{S}^2 \) be smooth maps. Then the pullback of the Maurer-Cartan\/ \( 1 \)-form \( \mu \) on\/ \( \mathrm{SO}(3) \) via the smooth map \( g = \boldsymbol{e}^{\theta \hat{e}} \colon U \to \mathrm{SO}(3) \) is given by
\begin{align}
g^* \mu & = \mathrm{d} \theta \hat{e} + \sin(\theta) \, \mathrm{d} \hat{e} + {\left( 1 - \cos(\theta) \right)} \, \widehat{\mathrm{d} e \times e}.
\end{align}
\end{plem}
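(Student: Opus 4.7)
The plan is to compute the pullback directly from $g^*\mu = g^{-1}\,\mathrm{d}g$, which holds because $\mu$ is the Maurer-Cartan form of the matrix Lie group $\mathrm{SO}(3)$. Writing $s = \sin\theta$ and $c = \cos\theta$ for brevity, Rodrigues' formula gives $g = I + s\hat{e} + (1-c)\hat{e}^2$, and I would first observe that $g^{-1} = I - s\hat{e} + (1-c)\hat{e}^2$ (the inverse is the rotation by angle $-\theta$ about the same axis). Entry-wise differentiation then yields
\[
\mathrm{d}g = \bigl(c\,\hat{e} + s\,\hat{e}^2\bigr)\,\mathrm{d}\theta + s\,\mathrm{d}\hat{e} + (1-c)\bigl(\mathrm{d}\hat{e}\cdot\hat{e} + \hat{e}\cdot\mathrm{d}\hat{e}\bigr),
\]
and the task reduces to expanding $g^{-1}\,\mathrm{d}g$ as a sum of products of $\hat{e}$, $\mathrm{d}\hat{e}$, and trigonometric functions of $\theta$.

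Three elementary identities should collapse this expansion into the stated form. First, since $|e|=1$, Lemma~\ref{L:hat}(b) gives $\hat{e}^3 = -\hat{e}$ (and hence $\hat{e}^4 = -\hat{e}^2$), which reduces high powers of $\hat{e}$. Second, differentiating $|e|^2 = 1$ yields $e \cdot \mathrm{d}e = 0$, so Lemma~\ref{L:hat}(b) applied pointwise to the $1$-form $\mathrm{d}\hat{e}$ gives the vanishing relation $\hat{e}\cdot\mathrm{d}\hat{e}\cdot\hat{e} = 0$, killing every term that has $\hat{e}$ on both sides of $\mathrm{d}\hat{e}$. Third, the commutator formula of Lemma~\ref{L:hat}(a) yields $\mathrm{d}\hat{e}\cdot\hat{e} - \hat{e}\cdot\mathrm{d}\hat{e} = \widehat{\mathrm{d}e \times e}$, which is exactly the ingredient needed to produce the last term of the claim.

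I would carry out the multiplication in blocks. The $\mathrm{d}\theta$-block, after using $\hat{e}^3 = -\hat{e}$ and $s^2 + c^2 = 1$, collapses to $\mathrm{d}\theta\,\hat{e}$ (the $\hat{e}^2$-coefficient cancels exactly). The blocks coming from $s\,\mathrm{d}\hat{e}$ and from $(1-c)(\mathrm{d}\hat{e}\cdot\hat{e} + \hat{e}\cdot\mathrm{d}\hat{e})$ produce several terms involving $\hat{e}\cdot\mathrm{d}\hat{e}$, $\mathrm{d}\hat{e}\cdot\hat{e}$, $\hat{e}^2\cdot\mathrm{d}\hat{e}$, and $\hat{e}^2\cdot\mathrm{d}\hat{e}\cdot\hat{e}$; the last vanishes by the second identity, the $\hat{e}^2\cdot\mathrm{d}\hat{e}$-contributions cancel between the two blocks, and after using the trigonometric reduction $(1-c) - (1-c)^2 = c-c^2$ and $-s^2 + c - c^2 = -(1-c)$, the remainder organizes into $s\,\mathrm{d}\hat{e} + (1-c)\bigl(\mathrm{d}\hat{e}\cdot\hat{e} - \hat{e}\cdot\mathrm{d}\hat{e}\bigr)$. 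The third identity then rewrites this as $s\,\mathrm{d}\hat{e} + (1-c)\widehat{\mathrm{d}e \times e}$, completing the proof.

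The main obstacle is purely bookkeeping: the raw expansion of $g^{-1}\mathrm{d}g$ produces roughly a dozen terms with nontrivial coefficients among $\{s, s^2, sc, 1-c, (1-c)^2\}$, and the simplification relies on invoking the three identities in combination. Organizing the computation by collecting separately the coefficients of $\hat{e}$, $\hat{e}^2$, $\mathrm{d}\hat{e}$, $\hat{e}\cdot\mathrm{d}\hat{e}$, and $\mathrm{d}\hat{e}\cdot\hat{e}$ before simplifying should make the cancellations transparent. Since the only geometric input beyond Lemma~\ref{L:hat} is the unit-norm constraint on $e$, the argument is elementary but demands care.
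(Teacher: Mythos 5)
Your proposal is correct and follows essentially the same route as the paper: compute $g^{*}\mu = g^{-1}\,\mathrm{d}g$ with $g^{-1} = I - \sin(\theta)\hat{e} + (1-\cos(\theta))\hat{e}^{2}$, expand, and simplify using the three identities $\hat{e}^{3} = -\hat{e}$, $\hat{e}\,\widehat{\mathrm{d}e}\,\hat{e} = 0$ (from $e\cdot\mathrm{d}e = 0$), and $\widehat{\mathrm{d}e\times e} = \widehat{\mathrm{d}e}\,\hat{e} - \hat{e}\,\widehat{\mathrm{d}e}$ from Lemma~\ref{L:hat}. The block-by-block cancellations you describe (the $\mathrm{d}\theta$-block collapsing to $\mathrm{d}\theta\,\hat{e}$, the $\hat{e}^{2}\widehat{\mathrm{d}e}$-terms cancelling, and the coefficient of $\hat{e}\,\widehat{\mathrm{d}e}$ reducing to $-(1-\cos\theta)$) all check out.
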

\begin{proof}
Fix a vector \( X \) in \( U \), and let the prime in the following computation denote the \( X \)-derivative. By Lemma~\ref{L:hat}, we have \( \hat{e}^3 = - \hat{e} \) and \( \hat{e} \hat{e}' \hat{e} = 0 \). Observe that
\begin{align*}
g^{-1} g' & = \begin{aligned}[t]
& {\left( I - \sin(\theta) \hat{e} + {\left( 1 - \cos(\theta) \right)} \hat{e}^2 \right)} \\
& \cdot {\left( \cos(\theta) \theta' \hat{e} + \sin(\theta) \hat{e}' + \sin(\theta) \theta' \hat{e}^2 + {\left( 1 - \cos(\theta) \right)} {\left( \hat{e}' \hat{e} + \hat{e} \hat{e}' \right)} \right)}
\end{aligned} \\
& = \begin{aligned}[t]
& {\left( \begin{aligned}
& \cos(\theta) I - \sin(\theta) \cos(\theta) \hat{e} - \cos(\theta) {\left( 1 - \cos(\theta) \right)} I \\
& + \sin(\theta) \hat{e} + \sin^2(\theta) I - \sin(\theta) {\left( 1 - \cos(\theta) \right)} \hat{e}
\end{aligned} \right)} \theta' \hat{e} \\
& + {\left( \begin{aligned}
& \sin(\theta) I - \sin^2(\theta) \hat{e} + \sin(\theta) {\left( 1 - \cos(\theta) \right)} \hat{e}^2 \\
& + {\left( 1 - \cos(\theta) \right)} \hat{e} - \sin(\theta) {\left( 1 - \cos(\theta) \right)} \hat{e}^2 - {\left( 1 - \cos(\theta) \right)}^2 \hat{e}
\end{aligned} \right)} \hat{e}' \\
& + {\left( 1 - \cos(\theta) \right)} \hat{e}' \hat{e}
\end{aligned} \\
& = \theta' \hat{e} + {\left( \sin(\theta) I - {\left( 1 - \cos(\theta) \right)} \hat{e} \right)} \hat{e}' + {\left( 1 - \cos(\theta) \right)} \hat{e}' \hat{e}.
\end{align*}
By Lemma~\ref{L:hat}, we have \( \widehat{e' \times e} = \hat{e}' \hat{e} - \hat{e} \hat{e}' \). This completes the proof.
\end{proof}

Note that \( \mathbb{R}^3 \)-valued forms can be viewed as \( \mathfrak{so}(3) \)-valued forms via the identification of the hat map. We have the following useful lemma.

\begin{plem}
Let \( P \) be a smooth manifold. Let \( \alpha \in \Omega^p {\left( P, \mathfrak{so}(3) \right)} \) and \( \beta \in \Omega^q {\left( P, \mathbb{R}^3 \right)} \). Then \( \widehat{\alpha \wedge \beta} = {\big[ \alpha \wedge \hat{\beta} \big]} \in \Omega^{p+q} {\left( P, \mathfrak{so}(3) \right)} \).
\end{plem}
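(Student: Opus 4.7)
The plan is to reduce the identity to the pointwise statement in Lemma~\ref{L:hat}(d), namely $\widehat{Ab} = A\hat{b} - \hat{b}A = [A, \hat{b}]$ whenever $A \in \mathfrak{so}(3)$ and $b \in \mathbb{R}^3$. The entire proof is essentially an observation that both sides of the claimed equation are obtained by applying a bilinear pointwise operation to $\alpha$ and $\hat{\beta}$, followed by evaluation against tuples of tangent vectors, and these two pointwise operations agree.

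First I would unpack the notational convention. On the left, $\alpha \wedge \beta \in \Omega^{p+q}(P,\mathbb{R}^3)$ is the $\mathbb{R}^3$-valued form built from the representation $\mathfrak{so}(3) \to \operatorname{End}(\mathbb{R}^3)$ (ordinary matrix-vector product): for any vectors $X_1,\ldots,X_{p+q}$ at a common point,
\begin{equation*}
(\alpha \wedge \beta)(X_1,\ldots,X_{p+q}) = \frac{1}{p!\,q!} \sum_{\sigma} \operatorname{sgn}(\sigma)\, \alpha(X_{\sigma(1)},\ldots,X_{\sigma(p)}) \cdot \beta(X_{\sigma(p+1)},\ldots,X_{\sigma(p+q)}).
\end{equation*}
On the right, $[\alpha \wedge \hat{\beta}] \in \Omega^{p+q}(P,\mathfrak{so}(3))$ uses the Lie bracket (commutator) of $\mathfrak{so}(3)$ in the analogous formula.

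Next I would use the $\mathbb{R}$-linearity of the hat map to commute it past the permutation sum. Setting $A_\sigma = \alpha(X_{\sigma(1)},\ldots,X_{\sigma(p)}) \in \mathfrak{so}(3)$ and $b_\sigma = \beta(X_{\sigma(p+1)},\ldots,X_{\sigma(p+q)}) \in \mathbb{R}^3$ for each permutation $\sigma$, linearity reduces the proof to verifying the pointwise identity $\widehat{A_\sigma b_\sigma} = [A_\sigma, \hat{b}_\sigma]$ summand by summand. But this is exactly what Lemma~\ref{L:hat}(d) furnishes. Reassembling the sums on both sides (with the same $\frac{1}{p!\,q!}\sum_\sigma \operatorname{sgn}(\sigma)$ weights) then yields $\widehat{\alpha \wedge \beta} = [\alpha \wedge \hat{\beta}]$ pointwise, which is the claim.

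There is no real obstacle; the only care needed is the bookkeeping of conventions for the two kinds of wedge products, but since both sides share the same permutation-sum and normalization and differ only by replacing the bilinear operation $(A,b) \mapsto Ab$ with $(A, \hat{b}) \mapsto [A, \hat{b}]$, Lemma~\ref{L:hat}(d) intertwines them exactly.
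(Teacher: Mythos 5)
Your proof is correct, but it takes a genuinely different route from the paper's. You reduce the identity to the pointwise statement of Lemma~\ref{L:hat}(d) by exploiting the fact that both wedge products are defined by the same alternating permutation sum, with the bilinear operation \( (A,b) \mapsto Ab \) on one side replaced by \( (A,\hat{b}) \mapsto [A,\hat{b}] \) on the other, and then commuting the \( \mathbb{R} \)-linear hat map past the sum. The paper instead works directly with matrix entries: for each cyclic triple \( (i,j,k) \) it expands \( {\big[ \alpha \wedge \hat{\beta} \big]}^i_j = {\big( \alpha \wedge \hat{\beta} \big)}^i_j - (-1)^{pq} {\big( \hat{\beta} \wedge \alpha \big)}^i_j \) as wedge products of the scalar component forms \( \alpha^i_j \) and \( \hat{\beta}^i_j = \mp \beta^k \), and uses the skew-symmetry of \( \alpha \) together with the graded commutation rule to collapse everything to \( -(\alpha \wedge \beta)^k \) --- in effect redoing the algebra of Lemma~\ref{L:hat}(d) at the level of forms rather than invoking it. Your version buys conceptual economy and makes clear that the lemma is purely a consequence of the pointwise intertwining property plus multilinearity; the paper's version buys an explicit, self-contained verification in which the \( (-1)^{pq} \) sign bookkeeping is visible. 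The one thing worth making explicit in your write-up is that your permutation-sum formula for \( [\alpha \wedge \hat{\beta}] \) agrees with the convention \( [\alpha \wedge \gamma] = \alpha \wedge \gamma - (-1)^{pq}\, \gamma \wedge \alpha \) used elsewhere in the paper (it does, after reindexing the block swap of the last \( q \) arguments), so that the ordinary commutator \( [A_\sigma, \hat{b}_\sigma] = A_\sigma \hat{b}_\sigma - \hat{b}_\sigma A_\sigma \) appearing summand by summand is the right object to match against Lemma~\ref{L:hat}(d).
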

\begin{proof}
For each \( (i,j,k) \in \{ (1,2,3), (2,3,1), (3,1,2) \} \), we have
\begin{align*}
{\big[ \alpha \wedge \hat{\beta} \big]}^i_j & = {\big( \alpha \wedge \hat{\beta} \big)}^i_j - (-1)^{pq} {\big( \hat{\beta} \wedge \alpha \big)}^i_j = \alpha^i_k \wedge \hat{\beta}^k_j - (-1)^{pq} \hat{\beta}^i_k \wedge \alpha^k_j \\
& = \alpha^i_k \wedge \hat{\beta}^k_j - \alpha^k_j \wedge \hat{\beta}^i_k = - \alpha^k_i \wedge \beta^i - \alpha^k_j \wedge \beta^j - \alpha^k_k \wedge \beta^k = - {\left( \alpha \wedge \beta \right)}^k.
\end{align*}
This completes the proof.
\end{proof}

\noindent Then~\eqref{E:fundamental-equations} corresponds to the following.
\begin{align}
\begin{aligned}
\mathrm{d} \hat{\theta} + {\big[ \omega \wedge \hat{\theta} \big]} & = \hat{\Theta}, & \quad \mathrm{d} \hat{\Theta} + {\big[ \omega \wedge \hat{\Theta} \big]} & = {\big[ \Omega \wedge \hat{\theta} \big]}, \\
\mathrm{d} \omega + {\textstyle \frac{1}{2}} {\left[ \omega \wedge \omega \right]} & = \Omega, & \quad \mathrm{d} \Omega + {\left[ \omega \wedge \Omega \right]} & = 0.
\end{aligned}
\end{align}
If \( \varphi \colon FM \isoto FM \) is a gauge transformation of the oriented orthonormal frame bundle \( FM \surjto M \) over an oriented Riemann-Cartan \( 3 \)-manifold \( M \), then the four fundamental forms obey the following gauge transformation formulas.
\begin{align}
\begin{aligned}
\varphi^* \hat{\theta} & = \Ad_{g_{\varphi}^{-1}} \hat{\theta}, & \quad \varphi^* \hat{\Theta} & = \Ad_{g_{\varphi}^{-1}} \hat{\Theta}, \\
\varphi^* \omega & = \Ad_{g_{\varphi}^{-1}} \omega + g_{\varphi}^* \mu, & \quad \varphi^* \Omega & = \Ad_{g_{\varphi}^{-1}} \Omega,
\end{aligned}
\end{align}
where \( g_{\varphi} \colon FM \to \mathrm{SO}(3) \) is the associated map and \( \mu \) is the Maurer-Cartan \( 1 \)-form on \( \mathrm{SO}(3) \). Only the connection \( \omega \) involves an additional term.

\subsection{Divergence and curl} \label{SS:divergence-curl}

In this subsection, we are interested in the case of an oriented smooth surface \( S \) embedded in a Weitzenb{\"o}ck \( 3 \)-manifold \( M \) with global smooth frame \( s = {\left( E_1, E_2, E_3 \right)} \), where \( S \) is oriented by the unit normal vector field \( N = N^1 E_1 + N^2 E_2 + N^3 E_3 \) along \( S \). We will find a description of the complex-valued quantity \( \boldsymbol{H} \) by the Gauss map and a nice gauge transformation formula.

To avoid an abuse of notation, we first introduce the following notations.

\begin{pdef}
Let \( \Sigma \) be a smooth manifold. Let \( d = {\left( D_1, D_2, D_3 \right)} \in \Gamma {\left( T \Sigma \right)}{}^3 \) be a triple of smooth vector fields. Let \( f \colon \Sigma \to \mathbb{R} \) and \( F = {\left( F^1, F^2, F^3 \right)} \colon \Sigma \to \mathbb{R}^3 \) be smooth maps. We define three smooth maps \( \Grad_d (f) \colon \Sigma \to \mathbb{R}^3 \), \( \Div_d (F) \colon \Sigma \to \mathbb{R} \), and \( \Curl_d (F) \colon \Sigma \to \mathbb{R}^3 \) by
\begin{align}
\begin{gathered}
\Grad_d (f) = {\left( D_1 f, D_2 f, D_3 f \right)}, \qquad \Div_d (F) = D_1 F^1 + D_2 F^2 + D_3 F^3, \\
\Curl_d (F) = {\left( D_2 F^3 - D_3 F^2, D_3 F^1 - D_1 F^3, D_1 F^2 - D_2 F^1 \right)}.
\end{gathered}
\end{align}
A simple mnemonic is the usual gradient, divergence, and curl in calculus.
\end{pdef}

We get a hint from Proposition~\ref{P:H} that the Weitzenb{\"o}ck mean curvature \( H^s \) can be understood as a ``divergence'' of the Gauss map \( n^s \) with respect to the projection of the global smooth frame \( s \). This agrees with the classical fact that the mean curvature in Euclidean geometry equals the minus of the divergence of the Gauss map. We shall observe that the Weitzenb{\"o}ck torsion \( 2 \)-form \( \tau^s \) can be understood as a ``curl'' of the Gauss map \( n^s \).

\begin{plem}
We have
\begin{align}
\tau^s & = - \sum_{\mathrm{cyc}} N^i {\left\langle N, {\left[ E_j, E_k \right]} \right\rangle} \, \mathrm{d} a.
\end{align}
\end{plem}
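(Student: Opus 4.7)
\bigskip

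The plan is to evaluate the $2$-form $\tau^s$ on an oriented orthonormal smooth frame for $S$, compare the result to the claimed coefficient, and then invoke the fact that any $2$-form on a surface is completely determined by its value on such a frame (and equals that value times $\mathrm{d} a$).

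First, I would unpack $\tau^s$ using its definition together with the Weitzenb\"ock torsion formula. By Definition~\ref{D:torsion-form}, $\tau^s(X, Y) = \langle N, \tilde{T}^s(X, Y) \rangle$, and by Proposition~\ref{P:Weitzenbock}~(b), $\tilde{T}^s(X, Y) = -\sum_{i, j} X^i Y^j [E_i, E_j]$ whenever $X = \sum_i X^i E_i$ and $Y = \sum_j Y^j E_j$. Hence
\begin{align*}
\tau^s(X, Y) & = -\sum_{i, j = 1}^3 X^i Y^j \langle N, [E_i, E_j] \rangle.
\end{align*}

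Next, I would pick a local oriented orthonormal smooth frame $(\bar{E}_1, \bar{E}_2)$ for $S$, so that $(\bar{E}_1, \bar{E}_2, N)$ is a local oriented orthonormal smooth frame for $M$ along $S$. Define the $\mathrm{SO}(3)$-valued smooth map $g$ by $(E_1, E_2, E_3) = (\bar{E}_1, \bar{E}_2, N) \cdot g$, so that $\bar{E}_\alpha = \sum_i g^\alpha_i E_i$ and $N = \sum_i g^3_i E_i$, i.e., $N^i = g^3_i$ (exactly as in the Gaussian curvature computation on the previous page). Plugging $X = \bar{E}_1$ and $Y = \bar{E}_2$ into the formula above yields
\begin{align*}
\tau^s(\bar{E}_1, \bar{E}_2) & = -\sum_{i, j = 1}^3 g^1_i g^2_j \langle N, [E_i, E_j] \rangle = -\sum_{\substack{i, j, k \\ \mathrm{cyc}}} {\left( g^1_i g^2_j - g^1_j g^2_i \right)} \langle N, [E_i, E_j] \rangle,
\end{align*}
where I used the antisymmetry of $[E_i, E_j]$ in $(i, j)$. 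Now I would invoke the cofactor identity $g^1_i g^2_j - g^1_j g^2_i = g^3_k$ for cyclic $(i, j, k)$, valid because $g \in \mathrm{SO}(3)$. Since $g^3_k = N^k$, the right-hand side becomes $-\sum_{\mathrm{cyc}} N^i \langle N, [E_j, E_k] \rangle$ after a relabeling of the cyclic sum.

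Finally, since $\mathrm{d} a(\bar{E}_1, \bar{E}_2) = 1$ and both sides of the claimed equality are $2$-forms on the surface $S$, agreement on one oriented orthonormal frame suffices to conclude that $\tau^s = -\sum_{\mathrm{cyc}} N^i \langle N, [E_j, E_k] \rangle \, \mathrm{d} a$ as desired. I do not anticipate any serious obstacle; the only care needed is in the indexing of the $\mathrm{SO}(3)$ cofactor identity, which is precisely the same manipulation already performed in the proof of the Gaussian curvature formula preceding this lemma.
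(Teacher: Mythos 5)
Your proposal is correct and follows essentially the same route as the paper's own proof: evaluate \( \tau^s \) on a local oriented orthonormal frame \( {\left( \bar{E}_1, \bar{E}_2 \right)} \), antisymmetrize into a cyclic sum, and apply the \( \mathrm{SO}(3) \) cofactor identity together with \( T^s {\left( E_i, E_j \right)} = - {\left[ E_i, E_j \right]} \). The only difference is cosmetic---the paper writes the change-of-frame matrix in the transposed convention \( {\left( \bar{E}_1, \bar{E}_2, N \right)} = {\left( E_1, E_2, E_3 \right)} \cdot \bar{g} \), so its cofactor identity acts on columns rather than rows.
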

\begin{proof}
Let \( {\left( \bar{E}_1, \bar{E}_2 \right)} \) be a local oriented orthonormal smooth frame for \( S \). Let \( \bar{g} \) be the \( \mathrm{SO}(3) \)-valued smooth map given by \( {\left( \bar{E}_1, \bar{E}_2, N \right)} = {\left( E_1, E_2, E_3 \right)} \cdot \bar{g} \). Then
\begin{align*}
\tau^s {\left( \bar{E}_1, \bar{E}_2 \right)} & = \sum_{i, j = 1}^3 \bar{g}^i_1 \bar{g}^j_2 {\left\langle N, T^s {\left( E_i, E_j \right)} \right\rangle} = \sum_{\substack{i, j, k \\ \mathrm{cyc}}} {\big( \bar{g}^i_1 \bar{g}^j_2 - \bar{g}^i_2 \bar{g}^j_1 \big)} {\left\langle N, T^s {\left( E_i, E_j \right)} \right\rangle}.
\end{align*}
Here, \( \bar{g}^i_1 \bar{g}^j_2 - \bar{g}^i_2 \bar{g}^j_1 = \bar{g}^k_3 = N^k \) and \( T^s {\left( E_i, E_j \right)} = - {\left[ E_i, E_j \right]} \).
\end{proof}

\begin{pprop} \label{P:divergence-curl}
We have
\begin{align}
H^s & = - \Div_{s^{\top}} \! {\left( n^s \right)} \quad \text{and} \quad \tau^s = - \Curl_{s^{\top}} \! {\left( n^s \right)} \cdot n^s \, \mathrm{d} a.
\end{align}
\end{pprop}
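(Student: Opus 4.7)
The first equality is essentially a restatement of Proposition~\ref{P:H}: by the definition of $\Div_{s^\top}$ we have $\Div_{s^\top}(n^s) = \sum_{i=1}^3 E_i^\top(N^i)$, which is precisely $-H^s$ by that proposition. So all the content is in the second equality.

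My plan for the second equality is to combine the lemma immediately preceding the proposition, which gives $\tau^s = -\sum_{\mathrm{cyc}} N^i \langle N, [E_j, E_k] \rangle \, \mathrm{d} a$, with a direct computation of $\Curl_{s^\top}(n^s) \cdot n^s$ in a local oriented orthonormal smooth frame $(F_1, F_2)$ for $S$. I would first expand $E_j^\top = \sum_{a=1}^2 \langle E_j, F_a \rangle F_a = \sum_a F_a^j F_a$, where $F_a^j = \langle E_j, F_a \rangle$ is the $j$-th $s$-component of $F_a$. A routine reorganization of the cyclic sum then collapses into a scalar triple product for each $a$:
\begin{align*}
\Curl_{s^\top}(n^s) \cdot n^s & = \sum_{\mathrm{cyc}} N^i {\left( E_j^\top(N^k) - E_k^\top(N^j) \right)} = \sum_{a=1}^2 N \cdot {\left( F_a \times F_a(n^s) \right)},
\end{align*}
where $F_a(n^s) = {\left( F_a(N^1), F_a(N^2), F_a(N^3) \right)} \in \mathbb{R}^3$ and the cross and dot products are the standard ones on $\mathbb{R}^3$.

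The key step is to identify $F_a(n^s) \in \mathbb{R}^3$ as the 3-vector of $s$-components of the tangent vector $-W^s(F_a) \in T_p S$. This follows from the Weitzenb{\"o}ck identity $\tilde{\del}^s_{F_a} N = \sum_\ell F_a(N^\ell) E_\ell$ together with Proposition~\ref{P:Weingarten-equation}. Expanding $-W^s(F_a) = -\sum_b \II^s(F_a, F_b) F_b$ and using $F_1 \times F_2 = N$ and $F_a \times F_a = 0$ (valid in $\mathbb{R}^3$ because the oriented orthonormal frame $s$ identifies $\mathbb{R}^3$ with $T_p M$ as oriented Euclidean spaces), I would obtain
\begin{align*}
\sum_{a=1}^2 F_a \times F_a(n^s) & = {\left( \II^s(F_2, F_1) - \II^s(F_1, F_2) \right)} N = - \tau^s(F_1, F_2) N
\end{align*}
by Proposition~\ref{P:SSFF}. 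Dotting with $n^s$ and using $|n^s| = 1$ then yields $\Curl_{s^\top}(n^s) \cdot n^s = -\tau^s(F_1, F_2)$, which is $-\tau^s / \mathrm{d} a$, as desired.

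The main obstacle will be the sign and index bookkeeping: verifying the reorganization into a scalar triple product, carefully identifying the 3-vector $F_a(n^s)$ with the tangent vector $-W^s(F_a) \in T_p S$ via the ambient frame $s$, and matching the sign convention $\tau^s(X,Y) = \II^s(X,Y) - \II^s(Y,X)$ of Proposition~\ref{P:SSFF}. Once these are set up, the calculation is essentially a single scalar triple product expansion.
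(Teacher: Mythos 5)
Your proposal is correct, and the first identity is handled exactly as in the paper (a direct rephrasing of Proposition~\ref{P:H}). For the second identity you take a genuinely different route. The paper stays on the Lie-bracket side: starting from the preceding lemma's formula \( \tau^s = - \sum_{\mathrm{cyc}} N^i {\left\langle N, {\left[ E_j, E_k \right]} \right\rangle} \, \mathrm{d} a \), it splits each ambient frame vector as \( E_j = E_j^{\top} + N^j N \) inside the brackets and verifies by hand that all terms involving the normal component cancel in the cyclic sum, leaving exactly \( \Curl_{s^{\top}} \! {\left( n^s \right)} \cdot n^s \). You, by contrast, do not really need that lemma at all: you rewrite \( \Curl_{s^{\top}} \! {\left( n^s \right)} \cdot n^s \) as the sum of scalar triple products \( \sum_a N \cdot {\left( F_a \times F_a {\left( n^s \right)} \right)} \) over a tangent orthonormal frame, identify \( F_a {\left( n^s \right)} \) with \( - W^s {\left( F_a \right)} \) via Proposition~\ref{P:Weingarten-equation} (equivalently Proposition~\ref{P:Weingarten-map}), and read off \( \tau^s {\left( F_1, F_2 \right)} = \II^s {\left( F_1, F_2 \right)} - \II^s {\left( F_2, F_1 \right)} \) from Proposition~\ref{P:SSFF}. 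The two computations are of comparable length, but yours is more structural: it makes transparent that the normal component of the curl of the Gauss map is precisely the skew-symmetric part of the Weingarten map, which is where \( \tau \) lives by Proposition~\ref{P:SSFF}; the paper's version instead leans on the explicit Weitzenb{\"o}ck formula \( T^s {\left( E_i, E_j \right)} = - {\left[ E_i, E_j \right]} \) and an ad hoc cancellation. The only points to keep straight in your version---the orientation convention \( F_1 \times F_2 = N \) (which follows from the paper's convention that \( {\left( N, F_1, F_2 \right)} \) is oriented, together with the right-hand rule) and the sign in \( W {\left( F_a \right)} = \sum_b \II {\left( F_a, F_b \right)} F_b \)---are exactly the ones you flag, and both work out.
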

\begin{proof}
The former is merely a rephrasing of Proposition~\ref{P:H}. Look at the latter. For an arbitrary smooth extension of \( N \) to an open subset of \( M \), we have
\begin{align*}
\sum_{\mathrm{cyc}} N^i {\left\langle N, {\left[ E_j, E_k \right]} \right\rangle} & = \sum_{\mathrm{cyc}} N^i {\left\langle N, {\left[ E_j^{\top} + N^j N, E_k^{\top} + N^k N \right]} \right\rangle} \\
& = \sum_{\mathrm{cyc}} N^i {\left( E_j^{\top} {\left( N^k \right)} - E_k^{\top} {\left( N^j \right)} + N^j F_k - N^k F_j \right)},
\end{align*}
where \( F_k = N {\left( N^k \right)} - {\left\langle N, {\left[ E_k^{\top}, N \right]} \right\rangle} \). Note that the summation of the last two terms containing \( F_{\bullet} \) vanishes, since \( \sum_{\mathrm{cyc}} N^i N^k F_j = \sum_{\mathrm{cyc}} N^j N^i F_k \).
\end{proof}

Note that \( M \) has the \emph{cross product} defined by \( X \times Y = {\left( {\star} {\left( X^{\flat} \wedge Y^{\flat} \right)} \right)}{}^{\sharp} \) for any vectors \( X \) and \( Y \) (at the same point). For any vector \( X \in T_x M \) at \( x \in S \), we define
\begin{align}
X^{\times} & = N \times X \in T_x S.
\end{align}
It is worth mentioning that the restriction of the map \( X \mapsto X^{\times} \) to \( TS \) actually coincides with the almost complex structure on \( S \) induced by its conformal structure, which is an intrinsic structure on \( S \).

\begin{plem}
We have
\begin{align}
\Curl_{s^{\top}} \! {\left( n^s \right)} & = - \Div_{s^{\times}} \! {\left( n^s \right)} \, n^s \quad \text{and} \quad \Curl_{s^{\times}} \! {\left( n^s \right)} = \Div_{s^{\top}} \! {\left( n^s \right)} \, n^s.
\end{align}
\end{plem}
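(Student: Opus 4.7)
My plan is to work pointwise at a fixed \(x \in S\) and exploit the identification \(T_x M \cong \mathbb{R}^3\) furnished by the orthonormal frame \(s\). Under this identification, \(N\) corresponds to \(n^s\); the tangent plane \(T_x S\) corresponds to the \(2\)-plane \({\left( n^s \right)}^{\perp} \subset \mathbb{R}^3\); the tangential projection \(E_i^{\top}\) corresponds to \(e_i - N^i n^s\); and \(E_i^{\times} = N \times E_i\) corresponds to \(n^s \times e_i\). In particular, the almost complex structure \(J\) on \(T_x S\) becomes the \(90^{\circ}\) rotation \(w \mapsto n^s \times w\) of \({\left( n^s \right)}^{\perp}\), and \(E_i^{\times} = J {\left( E_i^{\top} \right)}\).

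The Gauss map differential \(\mathrm{d} n^s \colon T_x S \to T_{n^s(x)} \mathbb{S}^2 = {\left( n^s \right)}^{\perp}\) then becomes an endomorphism \(P\) of the oriented Euclidean \(2\)-plane \({\left( n^s \right)}^{\perp}\). I would fix an oriented orthonormal basis \({\left\{ v_1, v_2 \right\}}\) with \(v_1 \times v_2 = n^s\), and write \(P v_{\alpha} = \sum_{\beta} P^{\beta}_{\alpha} v_{\beta}\). Expanding \(e_i^{\top} = \sum_{\alpha} v_{\alpha}^i v_{\alpha}\) then gives
\begin{align*}
E_i^{\top} {\left( N^j \right)} & = \sum_{\alpha, \beta} v_{\alpha}^i v_{\beta}^j P^{\beta}_{\alpha}, & E_i^{\times} {\left( N^j \right)} & = \sum_{\alpha, \beta} v_{\alpha}^i v_{\beta}^j {\left( PJ \right)}^{\beta}_{\alpha}.
\end{align*}
Contracting \(i = j\) and using \(v_{\alpha} \cdot v_{\beta} = \delta_{\alpha \beta}\) immediately yields \(\Div_{s^{\top}} {\left( n^s \right)} = \tr P\) and \(\Div_{s^{\times}} {\left( n^s \right)} = \tr(PJ) = P^1_2 - P^2_1\).

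The heart of the argument is the algebraic identity that, for any \(2 \times 2\) matrix \(A\) and any cyclic \((i, j, k)\),
\begin{align*}
\sum_{\alpha, \beta} A^{\beta}_{\alpha} {\left( v_{\alpha}^i v_{\beta}^j - v_{\alpha}^j v_{\beta}^i \right)} & = \sum_{\alpha, \beta} A^{\beta}_{\alpha} {\left( v_{\alpha} \times v_{\beta} \right)}^k = {\left( A^2_1 - A^1_2 \right)} N^k,
\end{align*}
since \(v_1 \times v_2 = n^s\) and \(v_{\alpha} \times v_{\alpha} = 0\). Taking \(A = P\) gives \({\left[ \Curl_{s^{\top}} {\left( n^s \right)} \right]}^k = {\left( P^2_1 - P^1_2 \right)} N^k = - \Div_{s^{\times}} {\left( n^s \right)} N^k\), which is the first identity. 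Taking \(A = PJ\) and computing \({\left( PJ \right)}^2_1 - {\left( PJ \right)}^1_2 = P^2_2 + P^1_1 = \tr P\) (with \(J = \bigl( \begin{smallmatrix} 0 & -1 \\ 1 & 0 \end{smallmatrix} \bigr)\) in the basis \({\left\{ v_1, v_2 \right\}}\)) gives \({\left[ \Curl_{s^{\times}} {\left( n^s \right)} \right]}^k = {\left( \tr P \right)} N^k = \Div_{s^{\top}} {\left( n^s \right)} N^k\), the second identity.

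The main bookkeeping task will be keeping the orientation conventions consistent --- in particular, that \(J\) on \(T_x S\) agrees with \(w \mapsto n^s \times w\) on \({\left( n^s \right)}^{\perp}\) under the identification, and that the matrix of \(J\) in \({\left\{ v_1, v_2 \right\}}\) carries the correct sign. Once these are settled, both identities reduce to the single cross-product identity above, with no deeper step required.
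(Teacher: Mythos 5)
Your proposal is correct, and it arrives at both identities by a route that is organized differently from the paper's. The paper works entirely in the ambient components: it first derives the pointwise identity \( E_i^{\times} = N^k E_j^{\top} - N^j E_k^{\top} \) for cyclic \( (i,j,k) \), then manipulates the sums \( \sum_i N^i E_{\bullet}(N^{\bullet}) \) directly, using \( \sum_i N^i E^{\bullet}(N^i) = 0 \) (from \( {\left| N \right|} = 1 \)) and the involution \( E_i^{\times\!\times} = - E_i^{\top} \) to convert one expression into the other. You instead package \( \mathrm{d} n^s \) as a single endomorphism \( P \) of the oriented plane \( {\left( n^s \right)}^{\perp} \iso T_x S \), observe that \( E_i^{\times} \) is obtained from \( E_i^{\top} \) by the rotation \( J \), and reduce all four quantities to \( \tr P \), \( \tr(PJ) \), and the antisymmetric part of \( P \) (resp.\ \( PJ \)) via one cross-product identity. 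The two arguments encode the same facts --- your expansion \( e_i^{\top} = \sum_{\alpha} v_{\alpha}^i v_{\alpha} \) plays the role of the paper's completeness relations in \( N^i \) --- but yours makes the structural reason for the lemma transparent: for a \( 2 \times 2 \) matrix, twisting the frame by \( J \) interchanges trace and antisymmetrization up to sign, which is exactly the divergence--curl duality being asserted. The paper's version has the minor advantage of never leaving the ambient frame \( s \) and so requires no auxiliary choice of \( {\left\{ v_1, v_2 \right\}} \); yours has the advantage of making both identities consequences of a single two-line linear-algebra computation. Your flagged orientation checks (\( J v_1 = v_2 \), \( J v_2 = - v_1 \) when \( v_1 \times v_2 = n^s \), hence \( \tr(PJ) = P^1_2 - P^2_1 \)) come out with the signs you state, so the argument closes as written.
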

\begin{proof}
For each \( (i,j,k) \in \{ (1,2,3), (2,3,1), (3,1,2) \} \), we have
\begin{align*}
& E_i^{\times} = N \times {\left( E_j \times E_k \right)} = N^k E_j - N^j E_k = N^k E_j^{\top} - N^j E_k^{\top}, \\
& - \Div_{s^{\times}} \! {\left( n^s \right)} \, N^i = - N^i E_i^{\times} {\left( N^i \right)} - N^i E_j^{\times} {\left( N^j \right)} - N^i E_k^{\times} {\left( N^k \right)} \\
& \qquad = {\left( N^j E_i^{\times} {\left( N^j \right)} + N^k E_i^{\times} {\left( N^k \right)} \right)} - N^i E_j^{\times} {\left( N^j \right)} - N^i E_k^{\times} {\left( N^k \right)} \\
& \qquad = {\left( N^j E_i^{\times} - N^i E_j^{\times} \right)} {\left( N^j \right)} - {\left( N^i E_k^{\times} - N^k E_i^{\times} \right)} {\left( N^k \right)} \\
& \qquad = E_k^{\times \! \times} {\left( N^j \right)} - E_j^{\times \! \times} {\left( N^k \right)} = - E_k^{\top} {\left( N^j \right)} + E_j^{\top} {\left( N^k \right)} = {\left( \Curl_{s^{\top}} \! {\left( n^s \right)} \right)}^i, \\
& {\left( \Curl_{s^{\times}} \! {\left( n^s \right)} \right)}^i = E_j^{\times} {\left( N^k \right)} - E_k^{\times} {\left( N^j \right)} \\
& \qquad = {\left( N^i E_k^{\top} - N^k E_i^{\top} \right)} {\left( N^k \right)} - {\left( N^j E_i^{\top} - N^i E_j^{\top} \right)} {\left( N^j \right)} \\
& \qquad = - {\left( N^j E_i^{\top} {\left( N^j \right)} + N^k E_i^{\top} {\left( N^k \right)} \right)} + N^i E_j^{\top} {\left( N^j \right)} + N^i E_k^{\top} {\left( N^k \right)} \\
& \qquad = N^i E_i^{\top} {\left( N^i \right)} + N^i E_j^{\top} {\left( N^j \right)} + N^i E_k^{\top} {\left( N^k \right)} = \Div_{s^{\top}} \! {\left( n^s \right)} \, N^i.
\end{align*}
The desired equalities follow.
\end{proof}

\begin{pcor} \label{C:divergence-curl}
The following four hold.
\begin{align}
\begin{aligned}
\Div_{s^{\top}} \! {\left( n^s \right)} & = - H^s, & \quad \Curl_{s^{\top}} \! {\left( n^s \right)} & = - {\star} \tau^s n^s, \\
\Div_{s^{\times}} \! {\left( n^s \right)} & = {\star} \tau^s, & \quad \Curl_{s^{\times}} \! {\left( n^s \right)} & = - H^s n^s.
\end{aligned}
\end{align}
\end{pcor}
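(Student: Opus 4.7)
The four equalities are all essentially packaged in the two results immediately preceding the corollary (Proposition~\ref{P:divergence-curl} and the unnamed lemma that computes $\Curl_{s^{\top}} \! {\left( n^s \right)}$ and $\Curl_{s^{\times}} \! {\left( n^s \right)}$ in terms of $\Div_{s^{\top}} \! {\left( n^s \right)}$ and $\Div_{s^{\times}} \! {\left( n^s \right)}$). My plan is therefore to treat the corollary as a short bookkeeping exercise rather than a new computation.

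First, the equality $\Div_{s^{\top}} \! {\left( n^s \right)} = -H^s$ is a verbatim restatement of the first half of Proposition~\ref{P:divergence-curl}, so nothing needs to be done. Next, for $\Curl_{s^{\top}} \! {\left( n^s \right)}$, I would combine the second half of Proposition~\ref{P:divergence-curl}, namely $\tau^s = - \Curl_{s^{\top}} \! {\left( n^s \right)} \cdot n^s \, \mathrm{d} a$ (which, by the definition of the Hodge dual on $S$, reads $\star \tau^s = - \Curl_{s^{\top}} \! {\left( n^s \right)} \cdot n^s$), with the lemma's identity $\Curl_{s^{\top}} \! {\left( n^s \right)} = - \Div_{s^{\times}} \! {\left( n^s \right)} \, n^s$. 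The latter says that $\Curl_{s^{\top}} \! {\left( n^s \right)}$ is pointwise a scalar multiple of the unit vector $n^s$, so it equals its own projection onto $n^s$, and hence $\Curl_{s^{\top}} \! {\left( n^s \right)} = {\left( \Curl_{s^{\top}} \! {\left( n^s \right)} \cdot n^s \right)} n^s = - \star \tau^s \, n^s$.

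For the third equality, I would take the dot product of the two expressions for $\Curl_{s^{\top}} \! {\left( n^s \right)}$ with $n^s$: from $\Curl_{s^{\top}} \! {\left( n^s \right)} = - \Div_{s^{\times}} \! {\left( n^s \right)} \, n^s$ and $\Curl_{s^{\top}} \! {\left( n^s \right)} = - \star \tau^s \, n^s$, using $|n^s| = 1$, I obtain $\Div_{s^{\times}} \! {\left( n^s \right)} = \star \tau^s$. The fourth equality is then immediate upon substituting $\Div_{s^{\top}} \! {\left( n^s \right)} = - H^s$ into the lemma's identity $\Curl_{s^{\times}} \! {\left( n^s \right)} = \Div_{s^{\top}} \! {\left( n^s \right)} \, n^s$.

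Since everything reduces to algebraic manipulation of two already established identities, there is no serious obstacle. The only point that merits care is the translation between the $2$-form $\tau^s$ and its Hodge dual $\star \tau^s$ (a scalar field on $S$), together with the observation that $\Curl_{s^{\top}} \! {\left( n^s \right)}$ is parallel to $n^s$, which is exactly what legitimizes recovering the full vector from its scalar projection onto $n^s$.
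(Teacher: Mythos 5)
Your proposal is correct and matches the paper's intent exactly: the corollary is stated without proof precisely because it follows by the bookkeeping you describe, combining Proposition~\ref{P:divergence-curl} with the preceding lemma and using that $\Curl_{s^{\top}}\!\left( n^s \right)$ is parallel to the unit vector $n^s$. The translation $\tau^s = \left( {\star}\tau^s \right) \mathrm{d}a$ that you flag is indeed the only point requiring care, and you handle it correctly.
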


Now, we want to find the gauge transformation formula.

\begin{plem} \label{L:gauge-transformation}
Let \( g \colon M \to \mathrm{SO}(3) \) be a smooth map. Then
\begin{align}
H^{s \cdot g} & = H^s - \sum_{\mathrm{cyc}} {\left( g^{-1} \right)}^* \mu^i_j {\left( E_k^{\times} \right)} \quad \text{and} \quad {\star} \tau^{s \cdot g} = {\star} \tau^s - \sum_{\mathrm{cyc}} {\left( g^{-1} \right)}^* \mu^i_j {\left( E_k^{\top} \right)},
\end{align}
where \( \mu \) is the Maurer-Cartan\/ \( 1 \)-form on\/ \( \mathrm{SO}(3) \).
\end{plem}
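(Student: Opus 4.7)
The plan is to reduce both identities to the Gauss-map formulas of Corollary~\ref{C:divergence-curl}. Writing $s' = s \cdot g$ and $h = g^{-1}$, the new frame vectors are $E'_j = g^i_j E_i$, so $(E'_j)^{\top} = g^i_j E_i^{\top}$ and $(E'_j)^{\times} = g^i_j E_i^{\times}$, while the Gauss map transforms as $(n^{s'})^j = h^j_k N^k$. Inserting this into $H^{s'} = -\mathrm{Div}_{(s')^{\top}}(n^{s'})$ and $\star \tau^{s'} = \mathrm{Div}_{(s')^{\times}}(n^{s'})$ and applying the product rule splits each expression into an ``undifferentiated'' part that collapses via $\sum_j g^i_j h^j_k = \delta^i_k$ (reproducing $H^s$, resp.\ $\star \tau^s$) and a remainder involving $E_i^{\top}(h^j_k)$, resp.\ $E_i^{\times}(h^j_k)$.

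Next, I would use the identity $(g^{-1})^* \mu = g \, d(g^{-1})$, which in components reads $(g^{-1})^*\mu^i_k(X) = \sum_j g^i_j \, X(h^j_k)$. This recasts the two remainders as the double sums $\sum_{i,k} (g^{-1})^*\mu^i_k(E_i^{\top}) \, N^k$ and $\sum_{i,k} (g^{-1})^*\mu^i_k(E_i^{\times}) \, N^k$ respectively. Since $(g^{-1})^* \mu$ is $\mathfrak{so}(3)$-valued, its diagonal entries vanish and off-diagonal entries pair antisymmetrically, so each double sum can be regrouped over $(i,k) \in \{(1,2),(2,3),(3,1)\}$ as $\sum (g^{-1})^*\mu^i_k \bigl( N^k E_i^{\top} - N^i E_k^{\top} \bigr)$ and its cross-product analogue.

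The final step is the geometric identification. For any cyclic $(a,b,c)$ the direct expansion of the cross product gives $E_c^{\times} = N^b E_a^{\top} - N^a E_b^{\top}$, which immediately converts the $H$-remainder into the cyclic sum $\sum_{\mathrm{cyc}} (g^{-1})^*\mu^i_j(E_k^{\times})$, establishing the first formula. For the $\tau$-remainder the companion identity $N^b E_a^{\times} - N^a E_b^{\times} = -E_c^{\top}$ is needed; it follows from substituting $E_a^{\times} = N^c E_b^{\top} - N^b E_c^{\top}$ and $E_b^{\times} = N^a E_c^{\top} - N^c E_a^{\top}$, collecting terms, and invoking the unit-length constraint in the form $\sum_m N^m E_m^{\top} = 0$ together with $(N^a)^2 + (N^b)^2 + (N^c)^2 = 1$. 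This gives the desired $-\sum_{\mathrm{cyc}} (g^{-1})^*\mu^i_j(E_k^{\top})$.

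The main obstacle is precisely this companion identity $N^b E_a^{\times} - N^a E_b^{\times} = -E_c^{\top}$; it is the source of the asymmetric sign between the two displayed formulas and is the only step that genuinely uses $|N| = 1$ rather than formal $\mathfrak{so}(3)$-linear algebra. Careful bookkeeping will also be required to match the ordering convention in $\sum_{\mathrm{cyc}}$ with the sign convention $E_c^{\times} = N^b E_a^{\top} - N^a E_b^{\top}$, so that the antisymmetric pairing under swapping $i \leftrightarrow k$ produces cyclic (rather than anticyclic) triples on the nose.
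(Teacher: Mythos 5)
Your proposal is correct and follows essentially the same route as the paper's proof: both start from Corollary~\ref{C:divergence-curl}, apply the product rule, identify the derivative terms with \( (g^{-1})^* \mu = g \, \mathrm{d} (g^{-1}) \), regroup by antisymmetry of \( \mathfrak{so}(3) \) into cyclic sums, and finish with the identities \( E_c^{\times} = N^b E_a^{\top} - N^a E_b^{\top} \) and \( N^b E_a^{\times} - N^a E_b^{\times} = - E_c^{\top} \). The only cosmetic difference is that the paper handles the \( \tau \)-case by declaring it ``the similar computation replacing \( \top \) with \( \times \)'' (the companion identity being implicit in \( E_c^{\times \! \times} = - E_c^{\top} \) from the lemma preceding Corollary~\ref{C:divergence-curl}), whereas you derive it by direct substitution together with \( \sum_m N^m E_m^{\top} = 0 \).
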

\begin{proof}
By Corollary~\ref{C:divergence-curl}, we have
\begin{align*}
H^{s \cdot g} & = - \sum_{i, j, k = 1}^3 g^j_i E_j^{\top} {\left\langle N, g^k_i E_k \right\rangle} = - \sum_{i, j, k = 1}^3 {\left( g^j_i g^k_i E_j^{\top} {\left( N^k \right)} + g^j_i E_j^{\top} \! {\left( g^k_i \right)} N^k \right)} \\
& = - \sum_{i = 1}^3 E_i^{\top} {\left( N^i \right)} - \sum_{i, j, k = 1}^3 g^j_i \, \mathrm{d} g^k_i {\left( N^k E_j^{\top} \right)} = H^s - \sum_{j, k = 1}^3 {\left( g^{-1} \right)}^* \mu^j_k {\left( N^k E_j^{\top} \right)} \\
& = H^s - \sum_{\substack{i, j, k \\ \mathrm{cyc}}} {\left( g^{-1} \right)}^* \mu^j_k {\left( N^k E_j^{\top} - N^j E_k^{\top} \right)}.
\end{align*}
Here, \( E_i^{\times} = N^k E_j^{\top} - N^j E_k^{\top} \), which proves the former. The latter follows from the similar computation replacing the superscript \( \top \) with \( \times \).
\end{proof}

\noindent Applying Lemma~\ref{L:rotation} to Lemma~\ref{L:gauge-transformation}, we obtain the following.

\begin{pprop} \label{P:gauge-transformation}
Let \( g \colon M \to \mathrm{SO}(3) \) be a smooth map such that \( g = \boldsymbol{e}^{\theta \hat{e}} \) along \( S \) for some smooth maps \( \theta \colon S \to \mathbb{R} / 2 \boldsymbol{\pi} \mathbb{Z} \) and \( e \colon S \to \mathbb{S}^2 \). Then
\begin{align}
\begin{aligned}
H^{s \cdot g} & = H^s - e \cdot \Grad_{s^{\times}} \! {\left( \theta \right)} - \sin(\theta) \Div_{s^{\times}} (e) + {\left( 1 - \cos(\theta) \right)} \Curl_{s^{\times}} (e) \cdot e, \\
{\star} \tau^{s \cdot g} & = {\star} \tau^s - e \cdot \Grad_{s^{\top}} \! {\left( \theta \right)} - \sin(\theta) \Div_{s^{\top}} (e) + {\left( 1 - \cos(\theta) \right)} \Curl_{s^{\top}} (e) \cdot e.
\end{aligned}
\end{align}
\end{pprop}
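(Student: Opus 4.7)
The plan is to feed the explicit Maurer-Cartan pullback of \(\boldsymbol{e}^{\theta \hat{e}}\) provided by Lemma~\ref{L:rotation} into the cyclic sums appearing in Lemma~\ref{L:gauge-transformation}. Since the vector fields \(E_k^{\top}\) and \(E_k^{\times}\) are tangent to \(S\), only the restriction of \(g\) to \(S\) is relevant. Pointwise along \(S\), \(g^{-1} = \boldsymbol{e}^{-\theta \hat{e}}\), so applying Lemma~\ref{L:rotation} (in a chart on \(S\)) with \(\theta\) replaced by \(-\theta\) yields
\begin{align}
(g^{-1})^* \mu & = - \mathrm{d} \theta \, \hat{e} - \sin(\theta) \, \mathrm{d} \hat{e} + {\left( 1 - \cos(\theta) \right)} \, \widehat{\mathrm{d} e \times e}
\end{align}
as an \(\mathfrak{so}(3)\)-valued \(1\)-form along \(S\).

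The next step is to substitute this into the identity \(H^{s \cdot g} = H^s - \sum_{\mathrm{cyc}} (g^{-1})^* \mu^i_j (E_k^{\times})\) from Lemma~\ref{L:gauge-transformation} and evaluate each of the three resulting cyclic sums. The key tool is the entrywise identity \(\hat{a}^i_j = - a^k\) for \((i,j,k)\) cyclic, which is immediate from the definition of the cross product matrix. Using it, the \(- \mathrm{d} \theta \, \hat{e}\)-term contributes \(e \cdot \Grad_{s^{\times}}(\theta)\) to the cyclic sum, the \(- \sin(\theta) \, \mathrm{d} \hat{e}\)-term contributes \(\sin(\theta) \Div_{s^{\times}}(e)\) by the analogous computation applied to \(E_k^{\times}(e^k)\), and the \({\left( 1 - \cos(\theta) \right)} \widehat{\mathrm{d} e \times e}\)-term contributes \(- {\left( 1 - \cos(\theta) \right)} \Curl_{s^{\times}}(e) \cdot e\) after expanding \((\mathrm{d} e \times e)^k (E_k^{\times}) = (E_k^{\times}(e) \times e)^k\) and regrouping the six resulting signed scalars by the components of \(e\). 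The outer minus sign in Lemma~\ref{L:gauge-transformation} then converts these into the three stated contributions to \(H^{s \cdot g} - H^s\), producing the first asserted identity.

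The second identity is established by a verbatim computation with \(E_k^{\top}\) replacing \(E_k^{\times}\) throughout: every step is structurally identical, and each \(s^{\times}\)-variant of \(\Grad, \Div, \Curl\) simply becomes its \(s^{\top}\)-variant.

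The main obstacle is purely book-keeping in the third (cross-product) term: six signed scalars indexed by cyclic \((i,j,k)\) must be collected into the single inner product \(\Curl_{s^{\times}}(e) \cdot e\), and managing the signs and re-indexings carefully is the one genuinely delicate point. The first two terms are essentially immediate once the hat-map identities and the definitions of \(\Grad\) and \(\Div\) along \(s^{\times}\) are in hand.
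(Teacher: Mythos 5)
Your proposal is correct and follows essentially the same route as the paper's proof: apply Lemma~\ref{L:rotation} to \( g^{-1} = \boldsymbol{e}^{-\theta\hat{e}} \) (noting that only \( g|_S \) matters since \( E_k^{\top} \) and \( E_k^{\times} \) are tangent to \( S \)), substitute into the cyclic sums of Lemma~\ref{L:gauge-transformation}, and use \( \hat{a}^i_j = -a^k \) to identify the three terms with \( e\cdot\Grad_{s^{\bullet}}(\theta) \), \( \sin(\theta)\Div_{s^{\bullet}}(e) \), and \( -(1-\cos(\theta))\Curl_{s^{\bullet}}(e)\cdot e \). The sign and re-indexing bookkeeping you flag in the cross-product term works out exactly as you describe, matching the paper's regrouping \( \sum_{\mathrm{cyc}} ( e^j E_k^{\bullet}(e^i) - e^i E_k^{\bullet}(e^j) ) = \Curl_{s^{\bullet}}(e)\cdot e \).
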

\begin{proof}
Let the superscript \( \bullet \) in the following computation denote either \( \top \) or \( \times \). By Lemma~\ref{L:rotation}, for each \( (i,j,k) \in \{ (1,2,3), (2,3,1), (3,1,2) \} \), we have
\begin{align*}
& {\left( g^{-1} \right)}^* \mu^i_j {\left( E_k^{\bullet} \right)} = - \mathrm{d} \theta {\left( E_k^{\bullet} \right)} \hat{e}^i_j - \sin(\theta) \, \mathrm{d} \hat{e}^i_j {\left( E_k^{\bullet} \right)} + {\left( 1 - \cos(\theta) \right)} {\big( \widehat{\mathrm{d} e {\left( E_k^{\bullet} \right)} \times e} \big)}{}^i_j \\
& \qquad = \mathrm{d} \theta {\left( E_k^{\bullet} \right)} e^k + \sin(\theta) \, \mathrm{d} e^k {\left( E_k^{\bullet} \right)} - {\left( 1 - \cos(\theta) \right)} {\left( \mathrm{d} e {\left( E_k^{\bullet} \right)} \times e \right)}^k \\
& \qquad = E_k^{\bullet} {\left( \theta \right)} e^k + \sin(\theta) E_k^{\bullet} {\left( e^k \right)} - {\left( 1 - \cos(\theta) \right)} {\left( e^j E_k^{\bullet} {\left( e^i \right)} - e^i E_k^{\bullet} {\left( e^j \right)} \right)}.
\end{align*}
Here, we have
\begin{gather*}
\sum_{k = 1}^3 E_k^{\bullet} {\left( \theta \right)} e^k = e \cdot \Grad_{s^{\bullet}} \! {\left( \theta \right)}, \qquad \sum_{k = 1}^3 E_k^{\bullet} {\left( e^k \right)} = \Div_{s^{\bullet}} (e), \\
\sum_{\mathrm{cyc}} {\left( e^j E_k^{\bullet} {\left( e^i \right)} - e^i E_k^{\bullet} {\left( e^j \right)} \right)} = \sum_{\mathrm{cyc}} e^k {\left( E_i^{\bullet} {\left( e^j \right)} - E_j^{\bullet} {\left( e^i \right)} \right)} = \Curl_{s^{\bullet}} (e) \cdot e.
\end{gather*}
This completes the proof.
\end{proof}

We obtain several interesting corollaries.

\begin{pthrm} \label{T:gauge-transformation}
Let \( n \colon S \to \mathbb{S}^2 \) be the Gauss map of \( S \) with respect to \( s \). Let \( g \colon M \to \mathrm{SO}(3) \) be a smooth map such that \( g = \boldsymbol{e}^{\theta \hat{n}} \) along \( S \) for some smooth map \( \theta \colon S \to \mathbb{R} / 2 \boldsymbol{\pi} \mathbb{Z} \). Then
\begin{align}
H^{s \cdot g} & = \cos(\theta) H^s - \sin(\theta) {\star} \tau^s \quad \text{and} \quad {\star} \tau^{s \cdot g} = \sin(\theta) H^s + \cos(\theta) {\star} \tau^s.
\end{align}
That is, \( \boldsymbol{H} {\left( s \cdot \boldsymbol{e}^{\theta \hat{n}} \right)} = \boldsymbol{H}(s) \boldsymbol{e}^{\boldsymbol{i} \theta} \).
\end{pthrm}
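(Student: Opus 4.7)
The plan is to specialize Proposition~\ref{P:gauge-transformation} to the case \( e = n^s \) (the Gauss map of \( S \) with respect to \( s \)). Under this specialization, the six scalar ingredients on the right-hand side collapse dramatically, and what remains is exactly the rotation matrix acting on the pair \( (H^s, {\star}\tau^s) \).

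First I would show that both gradient terms \( e \cdot \Grad_{s^{\bullet}}(\theta) \) vanish, where \( \bullet \in \{\top, \times\} \). Along \( S \), \( e^k = N^k \) and \( \sum_k N^k E_k = N \); combined with the pointwise identities \( E_k^{\top} = E_k - N^k N \) and \( E_k^{\times} = N \times E_k \), this gives
\begin{align*}
\sum_{k=1}^3 N^k E_k^{\top} & = N - |N|^2 N = 0, \qquad \sum_{k=1}^3 N^k E_k^{\times} = N \times N = 0
\end{align*}
as vector fields on \( S \). Hence \( \sum_k e^k E_k^{\bullet}(\theta) = 0 \) in both cases.

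Second, I would feed in the four identities of Corollary~\ref{C:divergence-curl}, namely \( \Div_{s^{\top}}(n^s) = -H^s \), \( \Div_{s^{\times}}(n^s) = {\star}\tau^s \), \( \Curl_{s^{\top}}(n^s) = -{\star}\tau^s\, n^s \), and \( \Curl_{s^{\times}}(n^s) = -H^s\, n^s \). Since \( n^s \cdot n^s = 1 \), the inner products of the last two with \( e = n^s \) evaluate to \( -{\star}\tau^s \) and \( -H^s \) respectively. Substituting these values and the vanishing gradient terms into Proposition~\ref{P:gauge-transformation} gives
\begin{align*}
H^{s \cdot g} & = H^s - \sin(\theta) \, {\star}\tau^s - (1 - \cos(\theta)) H^s = \cos(\theta) H^s - \sin(\theta) {\star}\tau^s, \\
{\star}\tau^{s \cdot g} & = {\star}\tau^s + \sin(\theta) H^s - (1 - \cos(\theta)) {\star}\tau^s = \sin(\theta) H^s + \cos(\theta) {\star}\tau^s.
\end{align*}

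Finally, viewing this as a matrix action on \( (H^s, {\star}\tau^s)^{\mathsf{T}} \) and identifying it with multiplication by \( \boldsymbol{e}^{\boldsymbol{i}\theta} \) on \( \boldsymbol{H}(s) = H^s + \boldsymbol{i} {\star}\tau^s \) gives the complex-valued form \( \boldsymbol{H}(s \cdot \boldsymbol{e}^{\theta \hat{n}}) = \boldsymbol{H}(s) \boldsymbol{e}^{\boldsymbol{i}\theta} \). There is no serious obstacle; the main subtlety is recognizing that the axis \( e = n^s \) of the rotation makes both gradient terms of Proposition~\ref{P:gauge-transformation} vanish, which is precisely what causes the formula to depend only on \( \theta \) and not on derivatives of \( \theta \).
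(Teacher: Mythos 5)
Your proposal is correct and follows essentially the same route as the paper: specialize Proposition~\ref{P:gauge-transformation} to \( e = n^s \), note that \( n \cdot \Grad_{s^{\bullet}}(\theta) = N^{\bullet}(\theta) = 0 \) for \( \bullet \in \{\top, \times\} \), and substitute the four identities of Corollary~\ref{C:divergence-curl}. The paper states this in one line; you have simply written out the substitution explicitly, and your arithmetic checks out.
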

\begin{proof}
If the superscript \( \bullet \) in the following denotes either \( \top \) or \( \times \), we have
\begin{align}
n \cdot \Grad_{s^{\bullet}} \! {\left( \theta \right)} & = \sum_{i = 1}^3 N^i E_i^{\bullet} {\left( \theta \right)} = N^{\bullet} {\left( \theta \right)} = 0.
\end{align}
We apply Corollary~\ref{C:divergence-curl} to Proposition~\ref{P:gauge-transformation}.
\end{proof}

\begin{pcor} \label{C:gauge-transformation}
If \( s \) and \( s'\) are two global oriented orthonormal smooth frames for \( M \) inducing the same Gauss map of \( S \), then\/ \( {\left| \boldsymbol{H}(s) \right|} = {\left| \boldsymbol{H}(s') \right|} \).
\end{pcor}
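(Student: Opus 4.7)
The approach is to express the change of frame $s \to s'$ as a gauge transformation that, along $S$, is a rotation about the unit normal, so that Theorem~\ref{T:gauge-transformation} can be invoked directly. First I would let $g \colon M \to \mathrm{SO}(3)$ be the unique smooth map with $s' = s \cdot g$, so that $E'_j = g^i_j E_i$. Expanding the common unit normal field $N$ in both frames gives $N^i E_i = N'^j E'_j = g^i_j N'^j E_i$, whence $n^s = g \cdot n^{s'}$ pointwise on $S$. The hypothesis $n^s = n^{s'}$ then forces $g(x) \cdot n(x) = n(x)$ for every $x \in S$, where $n$ denotes the common Gauss map. Consequently, along $S$, $g$ takes values in the pointwise stabilizer of $n$; that is, $g(x)$ is a rotation about the axis $n(x)$ for each $x \in S$.

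Next I would write $g|_S$ in axis-angle form $g|_S = \boldsymbol{e}^{\theta \hat{n}}$ and apply Theorem~\ref{T:gauge-transformation}. Since the conclusion is a pointwise equality of smooth scalar fields on $S$, it suffices to verify $|\boldsymbol{H}(s')|(x_0) = |\boldsymbol{H}(s)|(x_0)$ at each $x_0 \in S$ by working on a neighborhood. On a small enough open neighborhood $U \subset S$ of $x_0$, the angle $\theta$ can be chosen as a smooth map $U \to \mathbb{R}/2\boldsymbol{\pi}\mathbb{Z}$, since the one-parameter subgroup $t \mapsto \boldsymbol{e}^{t \hat{n}(x)}$ smoothly parametrizes the stabilizer of $n(x)$ in $\mathrm{SO}(3)$ and both $g$ and $n$ are smooth. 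Theorem~\ref{T:gauge-transformation} then yields $\boldsymbol{H}(s \cdot g) = \boldsymbol{H}(s) \boldsymbol{e}^{\boldsymbol{i}\theta}$ on $U$, from which $|\boldsymbol{H}(s')|(x_0) = |\boldsymbol{H}(s)|(x_0)$ follows because $|\boldsymbol{e}^{\boldsymbol{i}\theta}| = 1$. Since $x_0$ was arbitrary, $|\boldsymbol{H}(s)| = |\boldsymbol{H}(s')|$ on all of $S$.

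The only point requiring any care is extracting the smooth local angle $\theta$ from $g|_S$, particularly at points where $\theta(x_0) \in \{0, \boldsymbol{\pi}\}$ would normally make the generic axis-angle parametrization of $\mathrm{SO}(3)$ degenerate. This is not a real obstacle here because the axis $n$ is given a priori as a smooth section, so only a scalar angle must be recovered from $g$, which is routine. I do not anticipate any deeper difficulty beyond this smoothness check; everything else is a direct application of Theorem~\ref{T:gauge-transformation} combined with the elementary observation that multiplication by a unit complex number preserves modulus.
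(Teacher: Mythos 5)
Your proposal is correct and follows essentially the same route as the paper: write \( s' = s \cdot g \), observe that equality of the Gauss maps forces \( g n = n \) along \( S \) so that \( g \) is pointwise a rotation about the axis \( n \), and then invoke Theorem~\ref{T:gauge-transformation}. The extra care you take in extracting a smooth local angle \( \theta \) (and noting that the conclusion is pointwise, so local suffices) is a detail the paper leaves implicit, and your remark that no degeneracy occurs because the axis \( n \) is prescribed is accurate.
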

\begin{proof}
Let \( n \colon S \to \mathbb{S}^2 \) be the Gauss map of \( S \) with respect to either of them. Let \( g \colon M \to \mathrm{SO}(3) \) be the smooth map given by \( s' = s \cdot g \). Then \( gn = n \) on \( S \). This means that \( g \) is a spatial rotation about the axis \( n \) on \( S \).
\end{proof}

\subsection{Hopf differential} \label{SS:Hopf-differential}

In this subsection, we direct our attention to the relationship with the complex structure. We consider an oriented smooth surface \( S \) embedded in an oriented Riemann-Cartan \( 3 \)-manifold \( M \), where \( S \) is oriented by the unit normal vector field \( N \) along \( S \). Then \( S \) inherits a unique complex structure from its conformal structure. The general idea of the \emph{Hopf differential} extends to Riemann-Cartan geometry, which is a powerful tool for studying surfaces having constant mean curvature in minimal surface theory. For example, Fraser and Schoen~\cite{MR3404014} employed it to address the free boundary minimal disc problem.

\begin{pdef} \label{D:Hopf-differential}
The \emph{Hopf differential} is the quadratic differential on \( S \) defined by
\begin{align}
\varphi & = \II {\left( \frac{\partial}{\partial z}, \frac{\partial}{\partial z} \right)} \, \mathrm{d} z^2,
\end{align}
where \( z = x + \boldsymbol{i} y \) is a local complex chart on \( S \).
\end{pdef}

\begin{plem} \label{L:Hopf-differential}
Let \( z = x + \boldsymbol{i} y \) be a local complex chart on \( S \) such that the metric on \( S \) is of the form \( \lambda^2(z) \, \mathrm{d} z \, \mathrm{d} \bar{z} \) for some positive \( \lambda \). Extend\/ \( \frac{\partial}{\partial x} \),\/ \( \frac{\partial}{\partial y} \), and \( N \) arbitrarily smoothly to an open subset of \( M \). Then, along \( S \), we have
\begin{align}
{\left\langle N, \del_{\frac{\partial}{\partial x}} \frac{\partial}{\partial x} + \del_{\frac{\partial}{\partial y}} \frac{\partial}{\partial y} \right\rangle} & = \lambda^2 H \quad \text{and} \quad {\left\langle N, \del_{\frac{\partial}{\partial x}} \frac{\partial}{\partial y} - \del_{\frac{\partial}{\partial y}} \frac{\partial}{\partial x} \right\rangle} = \lambda^2 {\star} \tau.
\end{align}
\end{plem}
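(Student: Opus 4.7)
The plan is to reduce both identities to statements about the oriented orthonormal frame
\[
\bar{E}_1 = \lambda^{-1} \frac{\partial}{\partial x}, \qquad \bar{E}_2 = \lambda^{-1} \frac{\partial}{\partial y}
\]
on $S$ by unwinding the definitions of $H$ and $\star\tau$ and then converting back to the coordinate frame via the Leibniz rule.

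For the first equality, I would start from
\[
H = \tr W = \II(\bar{E}_1, \bar{E}_1) + \II(\bar{E}_2, \bar{E}_2),
\]
which holds by the definition of the Weingarten map (Definition~\ref{D:Weingarten-map}) and the fact that $\{\bar{E}_1, \bar{E}_2\}$ is orthonormal. By Definition~\ref{D:SSFF} and Lemma~\ref{L:SFF}, $\II(X,X) = \langle N, \del_X \tilde{X}\rangle$ for any smooth extension $\tilde{X}$. Applying this with $X = \bar{E}_i$ and expanding with the Leibniz rule,
\[
\del_{\lambda^{-1} \partial/\partial x}\bigl(\lambda^{-1} \tfrac{\partial}{\partial x}\bigr) = \lambda^{-2}\,\del_{\partial/\partial x}\tfrac{\partial}{\partial x} + \lambda^{-1}\bigl(\tfrac{\partial}{\partial x}\lambda^{-1}\bigr)\tfrac{\partial}{\partial x},
\]
the second term is tangent to $S$ and is killed by the pairing with $N$. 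Thus $\II(\bar{E}_i,\bar{E}_i) = \lambda^{-2}\langle N, \del_{\partial/\partial x^i}\partial/\partial x^i\rangle$ along $S$, and summing the two indices gives the first identity after multiplying by $\lambda^2$.

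For the second equality, I would use that on an oriented Riemannian surface the Hodge star of a $2$-form evaluated on any oriented orthonormal frame equals that value, so $\star\tau = \tau(\bar{E}_1, \bar{E}_2) = \lambda^{-2}\tau(\partial/\partial x, \partial/\partial y)$ by the $\mathcal{C}^\infty$-bilinearity of $\tau$. Then Proposition~\ref{P:SSFF} gives $\tau(X,Y) = \langle N, \tilde{T}(X,Y)\rangle$, and plugging in the definition
\[
\tilde{T}\bigl(\tfrac{\partial}{\partial x}, \tfrac{\partial}{\partial y}\bigr) = \del_{\partial/\partial x}\tfrac{\partial}{\partial y} - \del_{\partial/\partial y}\tfrac{\partial}{\partial x} - \bigl[\tfrac{\partial}{\partial x}, \tfrac{\partial}{\partial y}\bigr]
\]
finishes the computation, because the Lie bracket of coordinate vector fields vanishes.

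The proof is essentially bookkeeping, so there is no real obstacle: the only subtle point worth spelling out is that the Leibniz correction terms arising from the $\lambda^{-1}$ factors lie in $TS$ and therefore disappear upon taking $\langle N, \cdot\rangle$, which is what makes the $\lambda^2$ factors line up cleanly on both sides.
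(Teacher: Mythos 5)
Your argument is correct and follows essentially the same route as the paper: compute $H=\tr W$ and ${\star}\tau$ in the oriented orthonormal frame $\bigl(\lambda^{-1}\frac{\partial}{\partial x},\lambda^{-1}\frac{\partial}{\partial y}\bigr)$, strip the $\lambda^{-1}$ factors by tensoriality of $\II$ and $\tau$, and use that the coordinate Lie bracket vanishes in the torsion term. The only cosmetic difference is that you re-derive the tensoriality of $\II$ via the Leibniz rule and the normality of the correction term, whereas the paper simply invokes the already-established $\mathcal{C}^{\infty}$-bilinearity from Lemma~\ref{L:SFF}.
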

\begin{proof}
We have
\begin{align*}
\lambda^2 H & = \lambda^2 {\left( \II {\left( \frac{1}{\lambda} \frac{\partial}{\partial x}, \frac{1}{\lambda} \frac{\partial}{\partial x} \right)} + \II {\left( \frac{1}{\lambda} \frac{\partial}{\partial y}, \frac{1}{\lambda} \frac{\partial}{\partial y} \right)} \right)} = \II {\left( \frac{\partial}{\partial x}, \frac{\partial}{\partial x} \right)} + \II {\left( \frac{\partial}{\partial y}, \frac{\partial}{\partial y} \right)} \\
& = {\left\langle N, \del_{\frac{\partial}{\partial x}} \frac{\partial}{\partial x} \right\rangle} + {\left\langle N, \del_{\frac{\partial}{\partial y}} \frac{\partial}{\partial y} \right\rangle} = {\left\langle N, \del_{\frac{\partial}{\partial x}} \frac{\partial}{\partial x} + \del_{\frac{\partial}{\partial y}} \frac{\partial}{\partial y} \right\rangle}, \\
\lambda^2 {\star} \tau & = \lambda^2 {\left\langle N, T {\left( \frac{1}{\lambda} \frac{\partial}{\partial x}, \frac{1}{\lambda} \frac{\partial}{\partial y} \right)} \right\rangle} = {\left\langle N, T {\left( \frac{\partial}{\partial x}, \frac{\partial}{\partial y} \right)} \right\rangle} = {\left\langle N, \del_{\frac{\partial}{\partial x}} \frac{\partial}{\partial y} - \del_{\frac{\partial}{\partial y}} \frac{\partial}{\partial x} \right\rangle}
\end{align*}
along \( S \).
\end{proof}

\begin{plem}
Suppose the setup of Lemma~\ref{L:Hopf-differential}. Then, along \( S \), we have
\begin{align}
{\left\langle N, \del_{\frac{\partial}{\partial z}} \frac{\partial}{\partial \bar{z}} \right\rangle} & = \frac{\lambda^2}{4} \boldsymbol{H} \quad \text{and} \quad {\left\langle N, \del_{\frac{\partial}{\partial \bar{z}}} \frac{\partial}{\partial z} \right\rangle} = \frac{\lambda^2}{4} \bar{\boldsymbol{H}}.
\end{align}
\end{plem}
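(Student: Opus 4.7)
The plan is to reduce the identity to Lemma~\ref{L:Hopf-differential} via the standard Wirtinger decomposition
\begin{align*}
\frac{\partial}{\partial z} = \frac{1}{2}\!\left(\frac{\partial}{\partial x} - \boldsymbol{i}\frac{\partial}{\partial y}\right), \qquad \frac{\partial}{\partial \bar{z}} = \frac{1}{2}\!\left(\frac{\partial}{\partial x} + \boldsymbol{i}\frac{\partial}{\partial y}\right),
\end{align*}
extending $\del$ to the complexified tangent bundle by $\mathbb{C}$-bilinearity.

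First I would expand
\begin{align*}
\del_{\partial/\partial z}\frac{\partial}{\partial \bar{z}} = \frac{1}{4}\!\left(\del_{\partial/\partial x}\frac{\partial}{\partial x} + \del_{\partial/\partial y}\frac{\partial}{\partial y}\right) + \frac{\boldsymbol{i}}{4}\!\left(\del_{\partial/\partial x}\frac{\partial}{\partial y} - \del_{\partial/\partial y}\frac{\partial}{\partial x}\right),
\end{align*}
then pair with $N$ and apply Lemma~\ref{L:Hopf-differential} to each of the two parenthetical groups. This gives $\langle N, \del_{\partial/\partial z}\partial/\partial \bar{z}\rangle = (\lambda^2 H + \boldsymbol{i}\lambda^2{\star}\tau)/4 = (\lambda^2/4)\boldsymbol{H}$, as claimed.

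For the conjugate identity, since $\del$ is a real connection and $N$ is a real vector field, taking complex conjugates of the first identity immediately yields $\langle N, \del_{\partial/\partial \bar{z}}\partial/\partial z\rangle = (\lambda^2/4)\bar{\boldsymbol{H}}$; alternatively, one can redo the expansion above with the sign of $\boldsymbol{i}$ flipped, which by Lemma~\ref{L:Hopf-differential} leads to the same conclusion.

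There is no real obstacle here---the statement is essentially a repackaging of Lemma~\ref{L:Hopf-differential} into complex form. The only point worth noting conceptually is that the nontrivial imaginary part arises precisely because the scalar second fundamental form fails to be symmetric in Riemann-Cartan geometry, and the antisymmetric piece captured by $\tau$ is exactly what the $\boldsymbol{i}$ in the Wirtinger decomposition picks up. This is the clean way to see why $\boldsymbol{H} = H + \boldsymbol{i}{\star}\tau$ is the natural complex-valued refinement of mean curvature in this setting.
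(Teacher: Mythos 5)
Your proof is correct and is essentially the same as the paper's: both expand \( \del_{\partial/\partial z}\,\partial/\partial\bar z \) via the Wirtinger decomposition, extend \( \del \) by \( \mathbb{C} \)-bilinearity, and read off the real and imaginary parts from Lemma~\ref{L:Hopf-differential}. The conjugation shortcut for the second identity is a harmless minor variation on the paper's symmetric expansion.
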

\begin{proof}
Along \( S \), we have
\begin{align*}
4 \del_{\frac{\partial}{\partial z}} \frac{\partial}{\partial \bar{z}} & = \del_{\frac{\partial}{\partial x} - \boldsymbol{i} \frac{\partial}{\partial y}} {\left( \frac{\partial}{\partial x} + \boldsymbol{i} \frac{\partial}{\partial y} \right)} \quad \text{and} \quad 4 \del_{\frac{\partial}{\partial \bar{z}}} \frac{\partial}{\partial z} = \del_{\frac{\partial}{\partial x} + \boldsymbol{i} \frac{\partial}{\partial y}} {\left( \frac{\partial}{\partial x} - \boldsymbol{i} \frac{\partial}{\partial y} \right)}.
\end{align*}
Expanding \( \mathbb{C} \)-linearly completes the proof.
\end{proof}

\begin{plem} \label{L:coefficients}
Suppose the setup of Lemma~\ref{L:Hopf-differential}. For any \( X \in \Gamma(TM) \), we have
\begin{align}
X & = \frac{2}{\lambda^2} {\left\langle \frac{\partial}{\partial \bar{z}}, X \right\rangle} \frac{\partial}{\partial z} + \frac{2}{\lambda^2} {\left\langle \frac{\partial}{\partial z}, X \right\rangle} \frac{\partial}{\partial \bar{z}} + {\left\langle N, X \right\rangle} N \quad \text{along} \ S.
\end{align}
\end{plem}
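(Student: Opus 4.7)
The plan is to prove this by a direct orthogonal decomposition argument with respect to the basis $\bigl\{\tfrac{\partial}{\partial z},\tfrac{\partial}{\partial \bar z},N\bigr\}$ of the complexified tangent bundle $TM\otimes\mathbb{C}$ along $S$, where the Riemannian metric is extended $\mathbb{C}$-bilinearly.

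First I would split $X$ into its tangential and normal parts along $S$. Since $N$ is a real unit vector field, we have $X = X^{\top} + \langle N,X\rangle N$, so the last term in the claimed formula is immediate and the task reduces to expressing $X^{\top}\in\Gamma(TS)\otimes\mathbb{C}$ in the complex basis $\bigl\{\tfrac{\partial}{\partial z},\tfrac{\partial}{\partial\bar z}\bigr\}$.

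Next I would exploit the isothermal form $\lambda^2(z)\,\mathrm{d}z\,\mathrm{d}\bar z$ of the induced metric. A straightforward computation using $\tfrac{\partial}{\partial z}=\tfrac{1}{2}\bigl(\tfrac{\partial}{\partial x}-\boldsymbol{i}\tfrac{\partial}{\partial y}\bigr)$ and $\tfrac{\partial}{\partial\bar z}=\tfrac{1}{2}\bigl(\tfrac{\partial}{\partial x}+\boldsymbol{i}\tfrac{\partial}{\partial y}\bigr)$, together with $\bigl\langle \tfrac{\partial}{\partial x},\tfrac{\partial}{\partial x}\bigr\rangle=\bigl\langle\tfrac{\partial}{\partial y},\tfrac{\partial}{\partial y}\bigr\rangle=\lambda^2$ and $\bigl\langle\tfrac{\partial}{\partial x},\tfrac{\partial}{\partial y}\bigr\rangle=0$, yields
\begin{align*}
\biggl\langle\frac{\partial}{\partial z},\frac{\partial}{\partial z}\biggr\rangle = \biggl\langle\frac{\partial}{\partial\bar z},\frac{\partial}{\partial\bar z}\biggr\rangle = 0, \qquad \biggl\langle\frac{\partial}{\partial z},\frac{\partial}{\partial\bar z}\biggr\rangle = \frac{\lambda^2}{2}.
\end{align*}
Writing $X^{\top} = a\,\tfrac{\partial}{\partial z} + b\,\tfrac{\partial}{\partial\bar z}$ and pairing with $\tfrac{\partial}{\partial z}$ and $\tfrac{\partial}{\partial\bar z}$ respectively gives $\bigl\langle\tfrac{\partial}{\partial z},X\bigr\rangle = b\,\lambda^2/2$ and $\bigl\langle\tfrac{\partial}{\partial\bar z},X\bigr\rangle = a\,\lambda^2/2$, where I used that $\tfrac{\partial}{\partial z}$ and $\tfrac{\partial}{\partial\bar z}$ are tangential to $S$ so their pairing with $\langle N,X\rangle N$ vanishes. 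Solving for $a,b$ produces exactly the coefficients in the claimed identity.

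There is no real obstacle here; the only subtlety is conceptual, namely that one must interpret the inner products on the right-hand side via the $\mathbb{C}$-bilinear extension of the Riemannian metric rather than a Hermitian extension, so that the basis $\bigl\{\tfrac{\partial}{\partial z},\tfrac{\partial}{\partial\bar z}\bigr\}$ becomes null with the single nontrivial cross-pairing $\lambda^2/2$. Once this convention is fixed, the decomposition is just linear algebra on a 2-dimensional complex space.
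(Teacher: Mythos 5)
Your argument is correct and is exactly the one the paper intends: the paper's proof consists solely of recording the null pairings \( {\left\langle \frac{\partial}{\partial z}, \frac{\partial}{\partial z} \right\rangle} = {\left\langle \frac{\partial}{\partial \bar{z}}, \frac{\partial}{\partial \bar{z}} \right\rangle} = 0 \) and \( {\left\langle \frac{\partial}{\partial z}, \frac{\partial}{\partial \bar{z}} \right\rangle} = \lambda^2/2 \), leaving the tangential-plus-normal decomposition and the solving for coefficients implicit. You have simply written out those omitted steps, including the correct remark that the metric is extended \( \mathbb{C} \)-bilinearly.
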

\begin{proof}
Along \( S \), we have \( {\left\langle \frac{\partial}{\partial z}, \frac{\partial}{\partial z} \right\rangle} = {\left\langle \frac{\partial}{\partial \bar{z}}, \frac{\partial}{\partial \bar{z}} \right\rangle} = 0 \) and \( {\left\langle \frac{\partial}{\partial z}, \frac{\partial}{\partial \bar{z}} \right\rangle} = \lambda^2 / 2 \).
\end{proof}

\begin{plem} \label{L:derivatives}
Suppose the setup of Lemma~\ref{L:Hopf-differential}. Then, along \( S \), we have
\begin{align}
\del_{\frac{\partial}{\partial z}} \frac{\partial}{\partial \bar{z}} & = \frac{2}{\lambda^2} {\left\langle \frac{\partial}{\partial z}, T_S {\left( \frac{\partial}{\partial z}, \frac{\partial}{\partial \bar{z}} \right)} \right\rangle} \frac{\partial}{\partial \bar{z}} + \frac{\lambda^2}{4} \boldsymbol{H} N, \\
\del_{\frac{\partial}{\partial \bar{z}}} \frac{\partial}{\partial z} & = \frac{2}{\lambda^2} {\left\langle \frac{\partial}{\partial \bar{z}}, T_S {\left( \frac{\partial}{\partial \bar{z}}, \frac{\partial}{\partial z} \right)} \right\rangle} \frac{\partial}{\partial z} + \frac{\lambda^2}{4} \bar{\boldsymbol{H}} N, \\
\del_{\frac{\partial}{\partial z}} N & = - \frac{2}{\lambda^2} \II {\left( \frac{\partial}{\partial z}, \frac{\partial}{\partial z} \right)} \frac{\partial}{\partial \bar{z}} - \frac{1}{2} \boldsymbol{H} \frac{\partial}{\partial z}, \\
\del_{\frac{\partial}{\partial \bar{z}}} N & = - \frac{2}{\lambda^2} \II {\left( \frac{\partial}{\partial \bar{z}}, \frac{\partial}{\partial \bar{z}} \right)} \frac{\partial}{\partial z} - \frac{1}{2} \bar{\boldsymbol{H}} \frac{\partial}{\partial \bar{z}},
\end{align}
where \( T_S \) is the torsion of \( S \).
\end{plem}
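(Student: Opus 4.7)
The plan is to expand each of the four quantities in the (complex) frame $\{\partial/\partial z,\,\partial/\partial\bar z,\,N\}$ via Lemma~\ref{L:coefficients}, and compute each coefficient using metric compatibility, the preceding lemma, and, where necessary, the torsion identity on $S$. All scalar products are interpreted as the $\mathbb{C}$-bilinear extension of the metric, so that the isothermal relations $\langle\partial/\partial z,\partial/\partial z\rangle=\langle\partial/\partial\bar z,\partial/\partial\bar z\rangle=0$ and $\langle\partial/\partial z,\partial/\partial\bar z\rangle=\lambda^2/2$ hold along $S$.

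For $\del_{\partial/\partial z}\partial/\partial\bar z$, the $N$-coefficient is $\lambda^2\boldsymbol H/4$ by the preceding lemma. The $\partial/\partial z$-coefficient $(2/\lambda^2)\langle\partial/\partial\bar z,\del_{\partial/\partial z}\partial/\partial\bar z\rangle$ vanishes, since differentiating $\langle\partial/\partial\bar z,\partial/\partial\bar z\rangle=0$ along $\partial/\partial z$ and applying metric compatibility gives $2\langle\partial/\partial\bar z,\del_{\partial/\partial z}\partial/\partial\bar z\rangle=0$. For the $\partial/\partial\bar z$-coefficient, I would split $\del=\del^S+\II(\cdot,\cdot)N$ to pass to the intrinsic connection on $S$, invoke the torsion of $S$ together with $[\partial/\partial z,\partial/\partial\bar z]=0$ to write $\del^S_{\partial/\partial z}\partial/\partial\bar z=\del^S_{\partial/\partial\bar z}\partial/\partial z+T_S(\partial/\partial z,\partial/\partial\bar z)$, and observe that the same metric-compatibility trick applied to $\langle\partial/\partial z,\partial/\partial z\rangle=0$ kills the term $\langle\partial/\partial z,\del^S_{\partial/\partial\bar z}\partial/\partial z\rangle$. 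The surviving piece matches the stated torsion expression; the second display follows by complex conjugation.

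For $\del_{\partial/\partial z}N$, the $N$-coefficient vanishes from $\langle N,N\rangle=1$. The remaining two coefficients are obtained by differentiating $\langle\partial/\partial\bar z,N\rangle=0$ and $\langle\partial/\partial z,N\rangle=0$ along $\partial/\partial z$: metric compatibility yields $\langle\partial/\partial\bar z,\del_{\partial/\partial z}N\rangle=-\langle\del_{\partial/\partial z}\partial/\partial\bar z,N\rangle=-\lambda^2\boldsymbol H/4$ via the preceding lemma, and $\langle\partial/\partial z,\del_{\partial/\partial z}N\rangle=-\langle\del_{\partial/\partial z}\partial/\partial z,N\rangle=-\II(\partial/\partial z,\partial/\partial z)$ by the definition of the scalar second fundamental form. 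Substituting these into Lemma~\ref{L:coefficients} gives the third display, and the fourth is its conjugate.

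The only real subtlety is bookkeeping: one must carefully distinguish the ambient connection $\del$ from the intrinsic connection $\del^S$ on $S$ when the torsion of $S$ enters (since $T_S$ is intrinsic to $S$), and keep the $\mathbb{C}$-bilinear extension of the metric in mind so that the isothermal identities for $\partial/\partial z$ and $\partial/\partial\bar z$ can be applied freely. Once this is set up, the four displays reduce to a short sequence of routine applications of metric compatibility and the preceding lemma.
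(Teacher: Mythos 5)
Your proposal is correct and is exactly the argument the paper intends: its proof consists of the single line ``We use Lemma~\ref{L:coefficients},'' i.e.\ expand each vector field in the frame \( \{\partial/\partial z, \partial/\partial \bar z, N\} \) and evaluate the coefficients via metric compatibility, the isothermal relations, the preceding lemma for the \( N \)-components, and the Gauss formula together with \( [\partial/\partial z, \partial/\partial \bar z] = 0 \) for the torsion term. Your attention to the distinction between the ambient and induced connections when \( T_S \) enters is precisely the bookkeeping the paper leaves implicit.
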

\begin{proof}
We use Lemma~\ref{L:coefficients}.
\end{proof}

Now, we are ready to present the key equality in this subsection.

\begin{pprop} \label{P:Hopf-differential}
Suppose the setup of Lemma~\ref{L:Hopf-differential}. Then, along \( S \), we have
\begin{align}
\begin{aligned}
\frac{\partial}{\partial \bar{z}} \II {\left( \frac{\partial}{\partial z}, \frac{\partial}{\partial z} \right)} & = \begin{aligned}[t]
& \frac{\lambda^2}{4} \overline{{\left( \frac{\partial \boldsymbol{H}}{\partial \bar{z}} \right)}} - \frac{\boldsymbol{i}}{2} R {\left( \frac{\partial}{\partial x}, \frac{\partial}{\partial y}, \frac{\partial}{\partial z}, N \right)} \\
& - \frac{1}{2} \II {\left( J_S T_S {\left( \frac{\partial}{\partial x}, \frac{\partial}{\partial y} \right)}, \frac{\partial}{\partial z} \right)},
\end{aligned}
\end{aligned}
\end{align}
where \( R \) is the Riemann curvature of \( M \), \( J_S \) is the almost complex structure on \( S \), and \( T_S \) is the torsion of \( S \).
\end{pprop}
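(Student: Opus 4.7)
The plan is to compute the $\bar z$-derivative directly, using metric compatibility, the definition of ambient curvature, and Lemma~\ref{L:derivatives}. Metric compatibility first gives
\[
\frac{\partial}{\partial \bar z}{\left\langle N,\del_{\frac{\partial}{\partial z}}\frac{\partial}{\partial z}\right\rangle} = {\left\langle \del_{\frac{\partial}{\partial \bar z}} N,\del_{\frac{\partial}{\partial z}}\frac{\partial}{\partial z}\right\rangle} + {\left\langle N,\del_{\frac{\partial}{\partial \bar z}}\del_{\frac{\partial}{\partial z}}\frac{\partial}{\partial z}\right\rangle}.
\]
Since $[\frac{\partial}{\partial \bar z},\frac{\partial}{\partial z}] = 0$, I would rewrite the second summand as $R(\frac{\partial}{\partial \bar z},\frac{\partial}{\partial z})\frac{\partial}{\partial z} + \del_{\frac{\partial}{\partial z}}\del_{\frac{\partial}{\partial \bar z}}\frac{\partial}{\partial z}$. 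Expanding $\frac{\partial}{\partial z} = \frac{1}{2}(\frac{\partial}{\partial x} - \boldsymbol{i}\frac{\partial}{\partial y})$ and using skew-symmetry of $R$ in its first two arguments immediately turns the curvature piece into $R(\frac{\partial}{\partial \bar z},\frac{\partial}{\partial z},\frac{\partial}{\partial z},N) = -\frac{\boldsymbol{i}}{2}R(\frac{\partial}{\partial x},\frac{\partial}{\partial y},\frac{\partial}{\partial z},N)$, the target curvature term.

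For the remaining inner derivative, I would substitute $\del_{\frac{\partial}{\partial \bar z}}\frac{\partial}{\partial z} = B\frac{\partial}{\partial z} + \frac{\lambda^2}{4}\bar{\boldsymbol{H}}N$ from Lemma~\ref{L:derivatives}, with $B := \frac{2}{\lambda^2}\langle\frac{\partial}{\partial \bar z},T_S(\frac{\partial}{\partial \bar z},\frac{\partial}{\partial z})\rangle$, then apply $\del_{\frac{\partial}{\partial z}}$ and project onto $N$. Because $\del_{\frac{\partial}{\partial z}} N$ is tangent, the $N$-component reduces to $B\,\II(\frac{\partial}{\partial z},\frac{\partial}{\partial z}) + \frac{\partial}{\partial z}(\frac{\lambda^2\bar{\boldsymbol{H}}}{4}) = B\,\II(\frac{\partial}{\partial z},\frac{\partial}{\partial z}) + \frac{\lambda^2}{4}\overline{\frac{\partial \boldsymbol{H}}{\partial \bar z}} + \frac{\lambda}{2}\frac{\partial\lambda}{\partial z}\bar{\boldsymbol{H}}$. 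For the other summand $\langle \del_{\frac{\partial}{\partial \bar z}} N, \del_{\frac{\partial}{\partial z}}\frac{\partial}{\partial z}\rangle$, Lemma~\ref{L:derivatives} makes $\del_{\frac{\partial}{\partial \bar z}} N$ tangent, so only the tangent part of $\del_{\frac{\partial}{\partial z}}\frac{\partial}{\partial z}$ matters; $\langle\frac{\partial}{\partial z},\frac{\partial}{\partial z}\rangle = 0$ kills the $\frac{\partial}{\partial \bar z}$-coefficient, while metric compatibility on $\langle\frac{\partial}{\partial z},\frac{\partial}{\partial \bar z}\rangle = \lambda^2/2$ (together with Lemma~\ref{L:derivatives}) identifies the $\frac{\partial}{\partial z}$-coefficient as $\frac{2}{\lambda}\frac{\partial\lambda}{\partial z} - \frac{2}{\lambda^2}\langle\frac{\partial}{\partial z},T_S(\frac{\partial}{\partial z},\frac{\partial}{\partial \bar z})\rangle$. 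Substituting yields $-\frac{\lambda}{2}\frac{\partial\lambda}{\partial z}\bar{\boldsymbol{H}} + \frac{\bar{\boldsymbol{H}}}{2}\langle\frac{\partial}{\partial z},T_S(\frac{\partial}{\partial z},\frac{\partial}{\partial \bar z})\rangle$, so the $\frac{\partial\lambda}{\partial z}$ contributions cancel cleanly.

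What remains is to match the torsion residue $B\,\II(\frac{\partial}{\partial z},\frac{\partial}{\partial z}) + \frac{\bar{\boldsymbol{H}}}{2}\langle\frac{\partial}{\partial z},T_S(\frac{\partial}{\partial z},\frac{\partial}{\partial \bar z})\rangle$ with $-\frac{1}{2}\II(J_S T_S(\frac{\partial}{\partial x},\frac{\partial}{\partial y}),\frac{\partial}{\partial z})$. Skew-symmetry of $T_S$ gives $T_S(\frac{\partial}{\partial x},\frac{\partial}{\partial y}) = -2\boldsymbol{i}\,T_S(\frac{\partial}{\partial z},\frac{\partial}{\partial \bar z})$; the Gauss decomposition applied to Lemma~\ref{L:derivatives} yields $T_S(\frac{\partial}{\partial z},\frac{\partial}{\partial \bar z}) = B'\frac{\partial}{\partial \bar z} - B\frac{\partial}{\partial z}$ with $B' := \frac{2}{\lambda^2}\langle\frac{\partial}{\partial z},T_S(\frac{\partial}{\partial z},\frac{\partial}{\partial \bar z})\rangle$; and $J_S\frac{\partial}{\partial z} = \boldsymbol{i}\frac{\partial}{\partial z}$ combined with $J_S\frac{\partial}{\partial \bar z} = -\boldsymbol{i}\frac{\partial}{\partial \bar z}$ gives $J_S T_S(\frac{\partial}{\partial x},\frac{\partial}{\partial y}) = -2(B\frac{\partial}{\partial z} + B'\frac{\partial}{\partial \bar z})$. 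Pairing with $\frac{\partial}{\partial z}$ via $\II$ and invoking $\II(\frac{\partial}{\partial \bar z},\frac{\partial}{\partial z}) = \frac{\lambda^2}{4}\bar{\boldsymbol{H}}$ from Lemma~\ref{L:derivatives} produces an exact match. The main obstacle I anticipate is the careful bookkeeping of the tangent part of $\del_{\frac{\partial}{\partial z}}\frac{\partial}{\partial z}$: because $\langle\frac{\partial}{\partial z},\frac{\partial}{\partial z}\rangle = 0$ is vacuous, one is forced to use $\langle\frac{\partial}{\partial z},\frac{\partial}{\partial \bar z}\rangle$, and the resulting torsion correction (absent in Riemannian geometry) must conspire delicately with the $\frac{\partial\lambda}{\partial z}$ terms and with $B$ to reproduce precisely the $J_S T_S$ expression.
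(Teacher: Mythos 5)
Your proposal is correct and follows essentially the same route as the paper: the same splitting via metric compatibility, the same introduction of \( R {\left( \frac{\partial}{\partial \bar{z}}, \frac{\partial}{\partial z} \right)} \frac{\partial}{\partial z} \), and the same substitutions from Lemma~\ref{L:derivatives}, with the torsion residue \( B \, \II {\left( \frac{\partial}{\partial z}, \frac{\partial}{\partial z} \right)} + B' \, \II {\left( \frac{\partial}{\partial \bar{z}}, \frac{\partial}{\partial z} \right)} \) correctly identified with \( - \frac{1}{2} \II {\left( J_S T_S {\left( \frac{\partial}{\partial x}, \frac{\partial}{\partial y} \right)}, \frac{\partial}{\partial z} \right)} \). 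The only differences are organizational (you decompose \( \del_{\partial / \partial z} \frac{\partial}{\partial z} \) explicitly in the frame \( {\left\{ \frac{\partial}{\partial z}, \frac{\partial}{\partial \bar{z}}, N \right\}} \) and expand \( J_S T_S \) forward, where the paper manipulates inner products and collects the residue into a vector field \( X \)), and all signs and cancellations check out.
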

\begin{proof}
Along \( S \), we have
\begin{align*}
& \frac{\partial}{\partial \bar{z}} \II {\left( \frac{\partial}{\partial z}, \frac{\partial}{\partial z} \right)} = \frac{\partial}{\partial \bar{z}} {\left\langle \del_{\frac{\partial}{\partial z}} \frac{\partial}{\partial z}, N \right\rangle} = {\left\langle \del_{\frac{\partial}{\partial \bar{z}}} \del_{\frac{\partial}{\partial z}} \frac{\partial}{\partial z}, N \right\rangle} + {\left\langle \del_{\frac{\partial}{\partial z}} \frac{\partial}{\partial z}, \del_{\frac{\partial}{\partial \bar{z}}} N \right\rangle} \\
& \qquad = \begin{aligned}[t]
& {\left\langle R {\left( \frac{\partial}{\partial \bar{z}}, \frac{\partial}{\partial z} \right)} \frac{\partial}{\partial z} + \del_{\frac{\partial}{\partial z}} \del_{\frac{\partial}{\partial \bar{z}}} \frac{\partial}{\partial z}, N \right\rangle} \\
& + {\left\langle \del_{\frac{\partial}{\partial z}} \frac{\partial}{\partial z}, - \frac{2}{\lambda^2} \II {\left( \frac{\partial}{\partial \bar{z}}, \frac{\partial}{\partial \bar{z}} \right)} \frac{\partial}{\partial z} - \frac{1}{2} \bar{\boldsymbol{H}} \frac{\partial}{\partial \bar{z}} \right\rangle}
\end{aligned} \\
& \qquad = \begin{aligned}[t]
& R {\left( \frac{\partial}{\partial \bar{z}}, \frac{\partial}{\partial z}, \frac{\partial}{\partial z}, N \right)} + {\left\langle \del_{\frac{\partial}{\partial z}} {\left( \frac{2}{\lambda^2} {\left\langle \frac{\partial}{\partial \bar{z}}, T_S {\left( \frac{\partial}{\partial \bar{z}}, \frac{\partial}{\partial z} \right)} \right\rangle} \frac{\partial}{\partial z} + \frac{\lambda^2}{4} \bar{\boldsymbol{H}} N \right)}, N \right\rangle} \\
& - \frac{2}{\lambda^2} \II {\left( \frac{\partial}{\partial \bar{z}}, \frac{\partial}{\partial \bar{z}} \right)} {\left\langle \del_{\frac{\partial}{\partial z}} \frac{\partial}{\partial z}, \frac{\partial}{\partial z} \right\rangle} - \frac{1}{2} \bar{\boldsymbol{H}} {\left\langle \del_{\frac{\partial}{\partial z}} \frac{\partial}{\partial z}, \frac{\partial}{\partial \bar{z}} \right\rangle}.
\end{aligned}
\end{align*}
Let us look at term by term. For the first term, we have
\begin{align*}
& R {\left( \frac{\partial}{\partial \bar{z}}, \frac{\partial}{\partial z}, \frac{\partial}{\partial z}, N \right)} = \frac{1}{2} {\left( - R {\left( \frac{\partial}{\partial z}, \frac{\partial}{\partial \bar{z}}, \frac{\partial}{\partial z}, N \right)} + R {\left( \frac{\partial}{\partial \bar{z}}, \frac{\partial}{\partial z}, \frac{\partial}{\partial z}, N \right)} \right)} \\
& \qquad = \frac{1}{2} R {\left( \frac{\partial}{\partial z} + \frac{\partial}{\partial \bar{z}}, \frac{\partial}{\partial z} - \frac{\partial}{\partial \bar{z}}, \frac{\partial}{\partial z}, N \right)} = \frac{1}{2} R {\left( \frac{\partial}{\partial x}, - \boldsymbol{i} \frac{\partial}{\partial y}, \frac{\partial}{\partial z}, N \right)}.
\end{align*}
For the second term, we have
\begin{align*}
& {\left\langle \del_{\frac{\partial}{\partial z}} {\left( \frac{2}{\lambda^2} {\left\langle \frac{\partial}{\partial \bar{z}}, T_S {\left( \frac{\partial}{\partial \bar{z}}, \frac{\partial}{\partial z} \right)} \right\rangle} \frac{\partial}{\partial z} + \frac{\lambda^2}{4} \bar{\boldsymbol{H}} N \right)}, N \right\rangle} \\
& \qquad = \begin{aligned}[t]
& \frac{\partial}{\partial z} {\left( \frac{2}{\lambda^2} {\left\langle \frac{\partial}{\partial \bar{z}}, T_S {\left( \frac{\partial}{\partial \bar{z}}, \frac{\partial}{\partial z} \right)} \right\rangle} \right)} {\left\langle \frac{\partial}{\partial z}, N \right\rangle} \\
& + \frac{2}{\lambda^2} {\left\langle \frac{\partial}{\partial \bar{z}}, T_S {\left( \frac{\partial}{\partial \bar{z}}, \frac{\partial}{\partial z} \right)} \right\rangle} {\left\langle \del_{\frac{\partial}{\partial z}} \frac{\partial}{\partial z}, N \right\rangle} \\
& + \frac{\partial}{\partial z} {\left( \frac{\lambda^2}{4} \bar{\boldsymbol{H}} \right)} {\left\langle N, N \right\rangle} + \frac{\lambda^2}{4} \bar{\boldsymbol{H}} {\left\langle \del_{\frac{\partial}{\partial z}} N, N \right\rangle}
\end{aligned} \\
& \qquad = \frac{2}{\lambda^2} {\left\langle \frac{\partial}{\partial \bar{z}}, T_S {\left( \frac{\partial}{\partial \bar{z}}, \frac{\partial}{\partial z} \right)} \right\rangle} {\left\langle \del_{\frac{\partial}{\partial z}} \frac{\partial}{\partial z}, N \right\rangle} + \bar{\boldsymbol{H}} \frac{\partial}{\partial z} \frac{\lambda^2}{4} + \frac{\lambda^2}{4} \frac{\partial \bar{\boldsymbol{H}}}{\partial z}.
\end{align*}
The third term vanishes, since \( {\left\langle \frac{\partial}{\partial z}, \frac{\partial}{\partial z} \right\rangle} = 0 \). For the last term, we have
\begin{align*}
- \frac{1}{2} \bar{\boldsymbol{H}} {\left\langle \del_{\frac{\partial}{\partial z}} \frac{\partial}{\partial z}, \frac{\partial}{\partial \bar{z}} \right\rangle} & = - \frac{1}{2} \bar{\boldsymbol{H}} {\left( \frac{\partial}{\partial z} {\left\langle \frac{\partial}{\partial z}, \frac{\partial}{\partial \bar{z}} \right\rangle} - {\left\langle \frac{\partial}{\partial z}, \del_{\frac{\partial}{\partial z}} \frac{\partial}{\partial \bar{z}} \right\rangle} \right)} \\
& = - \bar{\boldsymbol{H}} \frac{\partial}{\partial z} \frac{\lambda^2}{4} + \frac{2}{\lambda^2} {\left\langle \del_{\frac{\partial}{\partial \bar{z}}} \frac{\partial}{\partial z}, N \right\rangle} {\left\langle \frac{\partial}{\partial z}, T_S {\left( \frac{\partial}{\partial z}, \frac{\partial}{\partial \bar{z}} \right)} \right\rangle}.
\end{align*}
To sum up, we obtain
\begin{align*}
\frac{\partial}{\partial \bar{z}} \II {\left( \frac{\partial}{\partial z}, \frac{\partial}{\partial z} \right)} & = \frac{\lambda^2}{4} \frac{\partial \bar{\boldsymbol{H}}}{\partial z} - \frac{\boldsymbol{i}}{2} R {\left( \frac{\partial}{\partial x}, \frac{\partial}{\partial y}, \frac{\partial}{\partial z}, N \right)} + {\left\langle \del_X \frac{\partial}{\partial z}, N \right\rangle},
\end{align*}
where
\begin{align*}
X & = \frac{2}{\lambda^2} {\left\langle \frac{\partial}{\partial \bar{z}}, T_S {\left( \frac{\partial}{\partial \bar{z}}, \frac{\partial}{\partial z} \right)} \right\rangle} \frac{\partial}{\partial z} + \frac{2}{\lambda^2} {\left\langle \frac{\partial}{\partial z}, T_S {\left( \frac{\partial}{\partial z}, \frac{\partial}{\partial \bar{z}} \right)} \right\rangle} \frac{\partial}{\partial \bar{z}} \\
& = \frac{2}{\lambda^2} {\left\langle - \boldsymbol{i} J_S \frac{\partial}{\partial \bar{z}}, T_S {\left( \frac{\partial}{\partial z}, \frac{\partial}{\partial \bar{z}} \right)} \right\rangle} \frac{\partial}{\partial z} + \frac{2}{\lambda^2} {\left\langle - \boldsymbol{i} J_S \frac{\partial}{\partial z}, T_S {\left( \frac{\partial}{\partial z}, \frac{\partial}{\partial \bar{z}} \right)} \right\rangle} \frac{\partial}{\partial \bar{z}} \\
& = \frac{2}{\lambda^2} {\left\langle \frac{\partial}{\partial \bar{z}}, \boldsymbol{i} J_S T_S {\left( \frac{\partial}{\partial z}, \frac{\partial}{\partial \bar{z}} \right)} \right\rangle} \frac{\partial}{\partial z} + \frac{2}{\lambda^2} {\left\langle \frac{\partial}{\partial z}, \boldsymbol{i} J_S T_S {\left( \frac{\partial}{\partial z}, \frac{\partial}{\partial \bar{z}} \right)} \right\rangle} \frac{\partial}{\partial \bar{z}}.
\end{align*}
By Lemma~\ref{L:coefficients}, we obtain
\begin{align*}
X & = \boldsymbol{i} J_S T_S {\left( \frac{\partial}{\partial z}, \frac{\partial}{\partial \bar{z}} \right)} = \frac{\boldsymbol{i}}{4} J_S T_S {\left( \frac{\partial}{\partial x} - \boldsymbol{i} \frac{\partial}{\partial y}, \frac{\partial}{\partial x} + \boldsymbol{i} \frac{\partial}{\partial y} \right)} \\
& = \frac{\boldsymbol{i}}{4} {\left( \boldsymbol{i} J_S T_S {\left( \frac{\partial}{\partial x}, \frac{\partial}{\partial y} \right)} - \boldsymbol{i} J_S T_S {\left( \frac{\partial}{\partial y}, \frac{\partial}{\partial x} \right)} \right)} = - \frac{1}{2} J_S T_S {\left( \frac{\partial}{\partial x}, \frac{\partial}{\partial y} \right)}.
\end{align*}
This completes the proof.
\end{proof}

\begin{pcor} \label{C:Hopf-differential}
Suppose that the following skew-symmetric smooth\/ \( (1,2) \)-tensor field \( L \) on \( S \) vanishes everywhere.
\begin{align}
L(X,Y) & = R {\left( X, Y \right)} N - J_S W J_S T_S {\left( X, Y \right)} \qquad (X, Y \in \Gamma(TS)),
\end{align}
where \( R \) is the curvature of \( M \), \( J_S \) is the almost complex structure on \( S \), and \( T_S \) is the torsion of \( S \). Then \( \boldsymbol{H} \) is holomorphic on \( S \) iff the Hopf differential \( \varphi \) on \( S \) is a holomorphic quadratic differential.
\end{pcor}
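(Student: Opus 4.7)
The plan is to start from the key identity in Proposition~\ref{P:Hopf-differential}, which writes \( \frac{\partial}{\partial \bar{z}} \II(\frac{\partial}{\partial z}, \frac{\partial}{\partial z}) \) as \( \tfrac{\lambda^2}{4}\, \overline{\frac{\partial \boldsymbol{H}}{\partial \bar{z}}} \) plus two correction terms, one involving the ambient Riemann curvature and one involving \( J_S T_S \). Since \( \varphi \) is a holomorphic quadratic differential iff the left-hand side vanishes in every local complex chart, and \( \boldsymbol{H} \) is holomorphic iff \( \frac{\partial \boldsymbol{H}}{\partial \bar{z}} = 0 \), the corollary reduces to showing that the two correction terms cancel pointwise whenever \( L = 0 \).

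For the curvature correction, I first rewrite \( R(\frac{\partial}{\partial x}, \frac{\partial}{\partial y}, \frac{\partial}{\partial z}, N) = \langle R(\frac{\partial}{\partial x}, \frac{\partial}{\partial y}) \frac{\partial}{\partial z}, N \rangle \) as \( -\langle R(\frac{\partial}{\partial x}, \frac{\partial}{\partial y}) N, \frac{\partial}{\partial z} \rangle \) using that the Riemann curvature of a Riemann-Cartan manifold is skew-symmetric in its last two slots. Invoking the hypothesis \( L = 0 \) then substitutes \( R(\frac{\partial}{\partial x}, \frac{\partial}{\partial y}) N = J_S W J_S T_S(\frac{\partial}{\partial x}, \frac{\partial}{\partial y}) \), so the curvature correction becomes \( \tfrac{\boldsymbol{i}}{2}\, \langle J_S W J_S T_S(\frac{\partial}{\partial x}, \frac{\partial}{\partial y}), \frac{\partial}{\partial z} \rangle \).

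Next, I use the skew-adjointness of \( J_S \) on \( TS \), extended \( \mathbb{C} \)-bilinearly to the complexified tangent bundle, together with the chart identity \( J_S \frac{\partial}{\partial z} = \boldsymbol{i}\, \frac{\partial}{\partial z} \), which follows from \( J_S \frac{\partial}{\partial x} = \frac{\partial}{\partial y} \) and \( J_S \frac{\partial}{\partial y} = -\frac{\partial}{\partial x} \). This gives \( \langle J_S V, \frac{\partial}{\partial z} \rangle = -\boldsymbol{i}\, \langle V, \frac{\partial}{\partial z} \rangle \) for any complexified tangent vector \( V \), so the curvature correction simplifies to \( \tfrac{1}{2}\, \langle W J_S T_S(\frac{\partial}{\partial x}, \frac{\partial}{\partial y}), \frac{\partial}{\partial z} \rangle = \tfrac{1}{2}\, \II(J_S T_S(\frac{\partial}{\partial x}, \frac{\partial}{\partial y}), \frac{\partial}{\partial z}) \) by the defining relation of the Weingarten map in Definition~\ref{D:Weingarten-map}. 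This precisely cancels the remaining torsion correction \( -\tfrac{1}{2}\, \II(J_S T_S(\frac{\partial}{\partial x}, \frac{\partial}{\partial y}), \frac{\partial}{\partial z}) \).

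The main obstacle is just careful bookkeeping of the \( \mathbb{C} \)-bilinear extension: one must confirm the sign in \( J_S \frac{\partial}{\partial z} = \boldsymbol{i}\, \frac{\partial}{\partial z} \) against the chosen orientation convention, and one must avoid inadvertently assuming that \( W \) is self-adjoint, since \( \II \) is generally non-symmetric in Riemann-Cartan geometry (only the asymmetric relation \( \II(V, Y) = \langle W V, Y \rangle \) is used here). Once the cancellation is verified, we obtain the clean identity \( \frac{\partial}{\partial \bar{z}} \II(\frac{\partial}{\partial z}, \frac{\partial}{\partial z}) = \tfrac{\lambda^2}{4}\, \overline{\frac{\partial \boldsymbol{H}}{\partial \bar{z}}} \) in every isothermal chart, and since \( \lambda > 0 \) the biconditional follows at every point of \( S \).
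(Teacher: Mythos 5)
Your proposal is correct and follows essentially the same route as the paper: both start from Proposition~\ref{P:Hopf-differential}, flip the curvature term using skew-symmetry in the last two slots, and use \( J_S \frac{\partial}{\partial z} = \boldsymbol{i}\,\frac{\partial}{\partial z} \) together with the skew-adjointness of \( J_S \) to show the two correction terms combine into (a multiple of) \( \langle J_S L(\frac{\partial}{\partial x},\frac{\partial}{\partial y}), \frac{\partial}{\partial z}\rangle \), which vanishes. The only cosmetic difference is that you substitute \( L=0 \) early rather than assembling the sum into \( -\tfrac{1}{2}\langle J_S L(\frac{\partial}{\partial x},\frac{\partial}{\partial y}),\frac{\partial}{\partial z}\rangle \) at the end, and your cautions about the sign of \( J_S\frac{\partial}{\partial z} \) and the non-self-adjointness of \( W \) are exactly the right ones.
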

\begin{proof}
Look at the equality in Proposition~\ref{P:Hopf-differential}. We have
\begin{align*}
& - \frac{\boldsymbol{i}}{2} R {\left( \frac{\partial}{\partial x}, \frac{\partial}{\partial y}, \frac{\partial}{\partial z}, N \right)} = \frac{\boldsymbol{i}}{2} R {\left( \frac{\partial}{\partial x}, \frac{\partial}{\partial y}, N, \frac{\partial}{\partial z} \right)} = \frac{\boldsymbol{i}}{2} {\left\langle R {\left( \frac{\partial}{\partial x}, \frac{\partial}{\partial y} \right)} N, \frac{\partial}{\partial z} \right\rangle} \\
& \qquad = \frac{1}{2} {\left\langle R {\left( \frac{\partial}{\partial x}, \frac{\partial}{\partial y} \right)} N, J_S \frac{\partial}{\partial z} \right\rangle} = - \frac{1}{2} {\left\langle J_S R {\left( \frac{\partial}{\partial x}, \frac{\partial}{\partial y} \right)} N, \frac{\partial}{\partial z} \right\rangle}, \\
& - \frac{1}{2} \II {\left( J_S T_S {\left( \frac{\partial}{\partial x}, \frac{\partial}{\partial y} \right)}, \frac{\partial}{\partial z} \right)} = - \frac{1}{2} {\left\langle W J_S T_S {\left( \frac{\partial}{\partial x}, \frac{\partial}{\partial y} \right)}, \frac{\partial}{\partial z} \right\rangle}.
\end{align*}
Therefore, their summation equals \( - \frac{1}{2} {\big\langle J_S L {\big( \frac{\partial}{\partial x}, \frac{\partial}{\partial y} \big)}, \frac{\partial}{\partial z} \big\rangle} \), which vanishes.
\end{proof}

In Corollary~\ref{C:Hopf-differential}, the assumption is imposed on \( S \), rather than on \( M \). One could think that it is more relevant to assume a property on \( M \). To find a sufficient condition on \( M \) so that the assumption in Corollary~\ref{C:Hopf-differential} is satisfied for every \( S \), recall that \( J_S X = N \times X \) for all \( X \in \Gamma(TS) \), so that
\begin{align}
L(X,Y) & = R {\left( X, Y \right)} N + N \times \del_{N \times T {\left( X, Y \right)}} N
\end{align}
for all \( X, Y \in \Gamma(TS) \) by Propositions~\ref{P:SSFF} and~\ref{P:Weingarten-equation}. Therefore, we want
\begin{align}
R {\left( X, Y \right)} Z + Z \times \del_{Z \times T {\left( X, Y \right)}} Z & = 0
\end{align}
for every local oriented orthonormal smooth frame \( {\left( X, Y, Z \right)} \) for \( M \), which holds iff
\begin{align*}
{\left\{ \begin{aligned}
R {\left( X, Y, Z, X \right)} + {\left\langle Z \times \del_{Z \times T {\left( X, Y \right)}} Z, X \right\rangle} & = 0, \\
R {\left( X, Y, Z, Y \right)} + {\left\langle Z \times \del_{Z \times T {\left( X, Y \right)}} Z, Y \right\rangle} & = 0
\end{aligned} \right.}
\end{align*}
iff \( \Ric(Y,Z) = {\left\langle \del_{Z \times T {\left( X, Y \right)}} Z, Y \right\rangle} \) and \( \Ric(X,Z) = {\left\langle \del_{Z \times T {\left( X, Y \right)}} Z, X \right\rangle} \) by elementary identities of the cross product. Equivalently,
\begin{align}
\Ric(X,Y) & = {\left\langle X, \del_{Y \times T {\left( X \times Y, X \right)}} Y \right\rangle}
\end{align}
for any orthogonal pair of local unit smooth vector fields \( X \) and \( Y \) on \( M \). The left-hand side is a smooth tensor, while the right-hand side depends on the local behavior of \( Y \). Therefore, we would expect
\begin{align}
\Ric(X,Y) & = 0 \quad \text{and} \quad T(X,Y) \perp X, Y
\end{align}
for every orthogonal pair of unit vectors \( X \) and \( Y \) (at the same point). The former is equivalent to the Ricci curvature being proportional to the metric at each point, since \( 0 = 2 \, \Ric {\big( \frac{X + Y}{\sqrt{2}}, \frac{X - Y}{\sqrt{2}} \big)} = \Ric(X,X) - \Ric(Y,Y) \). The latter is equivalent to the torsion being proportional to the cross product at each point.

\begin{pprop} \label{P:sufficient-condition}
Suppose that the Ricci curvature and the torsion of \( M \) are respectively proportional to the metric and the cross product at every point. Then, for any oriented smooth surface \( S \) embedded in \( M \), \( \boldsymbol{H} \) is holomorphic on \( S \) iff the Hopf differential \( \varphi \) on \( S \) is a holomorphic quadratic differential.
\end{pprop}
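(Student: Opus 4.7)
The plan is to verify that the tensor $L$ appearing in Corollary~\ref{C:Hopf-differential} vanishes identically on every oriented smooth surface $S$ embedded in $M$ under the stated hypotheses, and then invoke that corollary directly. The discussion immediately preceding the proposition already carries out the algebraic heart of the argument, so the remaining task is to organize those observations into a clean proof.

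First, I would rewrite the hypotheses as pointwise identities: there exist smooth functions $\lambda, \mu \colon M \to \mathbb{R}$ such that $\Ric = \lambda \, {\left\langle {}\cdot{}, {}\cdot{} \right\rangle}$ and $T(X, Y) = \mu \, X \times Y$ at each point. Fix an oriented smooth surface $S$ with unit normal $N$, a point $x \in S$, and an oriented orthonormal basis $(X, Y)$ of $T_x S$; then $(X, Y, N)$ is an oriented orthonormal basis of $T_x M$ with $X \times Y = N$.

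Next, I would observe that $T_S \equiv 0$ on $S$: for any $X, Y \in T_x S$, the vector $X \times Y$ is parallel to $N$, so $T(X, Y) = \mu \, X \times Y$ is purely normal, and Proposition~\ref{P:SSFF} forces $T_S(X, Y) = 0$. Hence the term $J_S W J_S T_S(X, Y)$ in $L(X, Y)$ is identically zero. Then I would show $R(X, Y) N = 0$: since $R$ is skew-symmetric in its last two arguments, $R(X, Y) N \perp N$, so it suffices to verify that its components along $X$ and $Y$ vanish. Expanding the Ricci curvature as a trace over the orthonormal frame $(X, Y, N)$ and using the two antisymmetries of $R$, one obtains
\begin{align*}
\Ric(Y, N) & = R(X, Y, N, X), \qquad \Ric(X, N) = - R(X, Y, N, Y),
\end{align*}
all other summands vanishing. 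Under the Ricci hypothesis, $\Ric(X, N) = \lambda \, {\left\langle X, N \right\rangle} = 0$ and likewise $\Ric(Y, N) = 0$, so both components of $R(X, Y) N$ vanish.

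Combining these two steps, $L(X, Y) = 0$ on the chosen orthonormal pair; by $\mathcal{C}^{\infty} {\left( S, \mathbb{R} \right)}$-bilinearity and skew-symmetry of $L$, the tensor $L$ vanishes identically on $S$, and Corollary~\ref{C:Hopf-differential} delivers the equivalence. No serious obstacle is anticipated, since the preceding discussion has already essentially done the work; the only place requiring a little care is the sign reconciliation when decomposing $R(X, Y) N$ as Ricci entries, which is handled by the antisymmetry $R(W, X, Y, Z) = - R(W, X, Z, Y)$ established earlier in the paper.
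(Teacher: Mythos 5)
Your proposal is correct and follows essentially the same route as the paper's proof: show that the ambient torsion restricted to tangent pairs is purely normal (so \( T_S = 0 \)), express the components of \( R(X,Y)N \) along an adapted orthonormal frame as the Ricci entries \( \Ric(Y,N) \) and \( -\Ric(X,N) \), which vanish by the proportionality hypothesis, and then invoke Corollary~\ref{C:Hopf-differential}. The only difference is that you spell out a few routine steps (skew-symmetry reducing the check to one basis pair, the normal component of \( R(X,Y)N \) vanishing) that the paper leaves implicit.
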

\begin{proof}
Let \( {\left( X, Y \right)} \) be a local oriented orthonormal smooth frame for \( S \). Then
\begin{align*}
R {\left( X, Y \right)} N & = R {\left( X, Y, N, X \right)} X + R {\left( X, Y, N, Y \right)} Y = \Ric(Y,N) X - \Ric(X,N) Y = 0
\end{align*}
and \( T_S {\left( X, Y \right)} = T {\left( X, Y \right)}{}^{\top} = 0 \) by Proposition~\ref{P:SSFF}. Therefore, \( L \) vanishes.
\end{proof}

We illustrate several examples that satisfy the condition in Proposition~\ref{P:sufficient-condition}.

\begin{pex} \label{Ex:Einstein}
If \( M \) is an oriented Riemannian \( 3 \)-manifold, then the condition in Proposition~\ref{P:sufficient-condition} exactly means that \( M \) is an Einstein \( 3 \)-manifold, which is equivalent to having constant sectional curvature in dimension three. Therefore, for every oriented smooth surface \( S \) embedded in an oriented Riemannian \( 3 \)-manifold \( M \) having constant sectional curvature, \( S \) has constant mean curvature iff the Hopf differential \( \varphi \) on \( S \) is a holomorphic quadratic differential. This is a classical result (Lemma~2.2 in Chapter~VI of~\cite{MR1013786}).
\end{pex}

\begin{pex}
Let \( M = \mathrm{SO}(3) \) be the Weitzenb{\"o}ck \( 3 \)-manifold with the left-invariant global smooth frame \( {\left( L_1, L_2, L_3 \right)} \) in~\eqref{E:L}. By Proposition~\ref{P:Weitzenbock}, the torsion satisfies \( T {\left( L_i, L_j \right)} = - {\left[ L_i, L_j \right]} = - L_k = - L_i \times L_j \) for all cyclic triple \( {\left( i, j, k \right)} \). That is, \( T {\left( X, Y \right)} = - X \times Y \) for all \( X, Y \in \Gamma(TM) \). Therefore, \( M \) satisfies the condition in Proposition~\ref{P:sufficient-condition}.
\end{pex}

\begin{pex} \label{Ex:Cartan-Schouten}
Let \( M = \mathbb{R}^3 \) be the Riemann-Cartan \( 3 \)-manifold whose metric is the standard Euclidean metric but connection is the one determined by
\begin{align}
\del_{\partial_i} \partial_j & = \lambda \, \partial_i \times \partial_j \qquad (i, j \in \{ 1, 2, 3 \})
\end{align}
and the Leibniz rule, for some fixed \( \lambda \in \mathbb{R} \), where \( {\left( \partial_1, \partial_2, \partial_3 \right)} \) is the standard global smooth frame. This connection is motivated by the Cartan-Schouten connections on Lie groups. It is indeed metric-compatible: For all \( i, j, k \in \{ 1, 2, 3 \} \),
\begin{align*}
\partial_i {\left\langle \partial_j, \partial_k \right\rangle} - {\left\langle \del_{\partial_i} \partial_j, \partial_k \right\rangle} - {\left\langle \partial_j, \del_{\partial_i} \partial_k \right\rangle} & = - \lambda {\left( {\left\langle \partial_i \times \partial_j, \partial_k \right\rangle} + {\left\langle \partial_j, \partial_i \times \partial_k \right\rangle} \right)} = 0.
\end{align*}
Unless \( \lambda = 0 \), \( M \) is neither flat nor torsion-free: For all \( i, j, k \in \{ 1, 2, 3 \} \),
\begin{align}
T {\left( \partial_i, \partial_j \right)} & = \del_{\partial_i} \partial_j - \del_{\partial_j} \partial_i = 2 \lambda \, \partial_i \times \partial_j, \\
R {\left( \partial_i, \partial_j \right)} \partial_k & = \del_{\partial_i} \del_{\partial_j} \partial_k - \del_{\partial_j} \del_{\partial_i} \partial_k = \lambda^2 {\left( \partial_i \times \partial_j \right)} \times \partial_k,
\end{align}
i.e., \( T {\left( X, Y \right)} = 2 \lambda X \times Y \) and \( R {\left( X, Y \right)} Z = \lambda^2 {\left( X \times Y \right)} \times Z \) for all \( X, Y, Z \in \Gamma(TM) \). In particular, \( \Ric = - 2 \lambda^2 {\left\langle {}\cdot{}, {}\cdot{} \right\rangle} \), and \( M \) has constant sectional curvature \( - \lambda^2 \). Therefore, \( M \) satisfies the condition in Proposition~\ref{P:sufficient-condition}.
\end{pex}

Meanwhile, the third fundamental form also yields a quadratic differential in a way similar to the Hopf differential (Definition~\ref{D:Hopf-differential}).

\begin{pdef} \label{D:TFF}
We define a quadratic differential \( \psi \) on \( S \) by
\begin{align}
\psi & = \III {\left( \frac{\partial}{\partial z}, \frac{\partial}{\partial z} \right)} \, \mathrm{d} z^2,
\end{align}
where \( z = x + \boldsymbol{i} y \) is a local complex chart on \( S \).
\end{pdef}

\begin{pprop} \label{P:TFF}
We have \( \psi = \boldsymbol{H} \varphi \).
\end{pprop}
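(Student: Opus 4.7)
The plan is to work in an isothermal local complex chart $z = x + \boldsymbol{i} y$ with metric $\lambda^2(z) \, \mathrm{d} z \, \mathrm{d} \bar{z}$ (such a chart exists by conformality), and to exploit the complex-bilinear extensions of $\langle {}\cdot{}, {}\cdot{} \rangle$, $\II$, and $W$ to $TS \otimes \mathbb{C}$. The key observation is that $\psi$ and $\varphi$ are both built from $W(\tfrac{\partial}{\partial z})$, since $\II(X,Y) = \langle W(X), Y \rangle_M$ and $\III(X,Y) = \langle W(X), W(Y) \rangle_M$. So the strategy is to compute $W {\left( \tfrac{\partial}{\partial z} \right)}$ once, then simply take the inner product with itself.

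First I would invoke Proposition~\ref{P:Weingarten-equation} together with the third equation in Lemma~\ref{L:derivatives} to read off the decomposition
\begin{align*}
W {\left( \tfrac{\partial}{\partial z} \right)} \;=\; - \del_{\tfrac{\partial}{\partial z}} N \;=\; \tfrac{2}{\lambda^2} \II {\left( \tfrac{\partial}{\partial z}, \tfrac{\partial}{\partial z} \right)} \tfrac{\partial}{\partial \bar{z}} + \tfrac{1}{2} \boldsymbol{H} \, \tfrac{\partial}{\partial z}
\end{align*}
along $S$, where $W$ has been extended $\mathbb{C}$-linearly. As a sanity check, pairing this with $\tfrac{\partial}{\partial z}$ using $\langle \tfrac{\partial}{\partial z}, \tfrac{\partial}{\partial z} \rangle = 0$ and $\langle \tfrac{\partial}{\partial z}, \tfrac{\partial}{\partial \bar{z}} \rangle = \lambda^2/2$ (from Lemma~\ref{L:coefficients}) returns $\II {\left( \tfrac{\partial}{\partial z}, \tfrac{\partial}{\partial z} \right)}$, confirming consistency.

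Next I would compute $\III {\left( \tfrac{\partial}{\partial z}, \tfrac{\partial}{\partial z} \right)} = \langle W {\left( \tfrac{\partial}{\partial z} \right)}, W {\left( \tfrac{\partial}{\partial z} \right)} \rangle$ by expanding the square of the displayed expression. The two diagonal terms vanish because $\langle \tfrac{\partial}{\partial z}, \tfrac{\partial}{\partial z} \rangle = \langle \tfrac{\partial}{\partial \bar{z}}, \tfrac{\partial}{\partial \bar{z}} \rangle = 0$, so only the cross term survives:
\begin{align*}
\III {\left( \tfrac{\partial}{\partial z}, \tfrac{\partial}{\partial z} \right)} \;=\; 2 \cdot \tfrac{2}{\lambda^2} \II {\left( \tfrac{\partial}{\partial z}, \tfrac{\partial}{\partial z} \right)} \cdot \tfrac{1}{2} \boldsymbol{H} \cdot \tfrac{\lambda^2}{2} \;=\; \boldsymbol{H} \, \II {\left( \tfrac{\partial}{\partial z}, \tfrac{\partial}{\partial z} \right)}.
\end{align*}
Multiplying both sides by $\mathrm{d} z^2$ yields $\psi = \boldsymbol{H} \varphi$, as desired.

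I do not anticipate a real obstacle here; the identity is essentially an algebraic consequence of the explicit formula for $W {\left( \tfrac{\partial}{\partial z} \right)}$ from Lemma~\ref{L:derivatives} and the null nature of the complexified tangent directions $\tfrac{\partial}{\partial z}, \tfrac{\partial}{\partial \bar{z}}$. The only point requiring a bit of care is to use the $\mathbb{C}$-bilinear (not Hermitian) extension of the metric consistently; otherwise the cancellation pattern producing ``only the cross term survives'' will be obscured. Since the result is chart-independent in appearance (both $\varphi$ and $\psi$ are genuine quadratic differentials), no further gluing argument is required.
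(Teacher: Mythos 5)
Your proposal is correct and follows essentially the same route as the paper: the paper likewise computes \( \III {\left( \frac{\partial}{\partial z}, \frac{\partial}{\partial z} \right)} = {\left\langle \del_{\frac{\partial}{\partial z}} N, \del_{\frac{\partial}{\partial z}} N \right\rangle} \) using the expansion of \( \del_{\frac{\partial}{\partial z}} N \) from Lemma~\ref{L:derivatives} and the null pairings \( {\left\langle \frac{\partial}{\partial z}, \frac{\partial}{\partial z} \right\rangle} = {\left\langle \frac{\partial}{\partial \bar{z}}, \frac{\partial}{\partial \bar{z}} \right\rangle} = 0 \), so only the cross term survives. Your arithmetic and the consistency check are both fine.
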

\begin{proof}
Let \( z = x + \boldsymbol{i} y \) be a local complex chart on \( S \) such that the metric on \( S \) is of the form \( \lambda^2(z) \, \mathrm{d} z \, \mathrm{d} \bar{z} \) for some positive \( \lambda \). By Lemma~\ref{L:derivatives}, we have
\begin{align*}
\III {\left( \frac{\partial}{\partial z}, \frac{\partial}{\partial z} \right)} & = {\left\langle \del_{\frac{\partial}{\partial z}} N, \del_{\frac{\partial}{\partial z}} N \right\rangle} = \lambda^2 {\left( - \frac{2}{\lambda^2} \II {\left( \frac{\partial}{\partial z}, \frac{\partial}{\partial z} \right)} \right)} {\left( - \frac{1}{2} \boldsymbol{H} \right)},
\end{align*}
which completes the proof.
\end{proof}

\begin{pcor} \label{C:TFF}
Suppose that the skew-symmetric smooth\/ \( (1,2) \)-tensor field \( L \) on \( S \) in Corollary~\ref{C:Hopf-differential} vanishes everywhere. If \( \boldsymbol{H} \) is holomorphic on \( S \), then \( \psi \) is a holomorphic quadratic differential on \( S \).
\end{pcor}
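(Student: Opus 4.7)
The plan is to deduce this directly from two results already established in the excerpt: Proposition~\ref{P:TFF}, which asserts the pointwise identity $\psi = \boldsymbol{H}\varphi$, and Corollary~\ref{C:Hopf-differential}, which under the vanishing of $L$ converts the holomorphicity of $\boldsymbol{H}$ into the holomorphicity of the Hopf differential $\varphi$.

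First, I would invoke Corollary~\ref{C:Hopf-differential}: since $L$ vanishes everywhere and $\boldsymbol{H}$ is holomorphic on $S$ by hypothesis, we conclude that $\varphi$ is a holomorphic quadratic differential on $S$. Next, I would apply Proposition~\ref{P:TFF} to write $\psi = \boldsymbol{H}\varphi$. Since the product of a holomorphic function with a holomorphic quadratic differential is again a holomorphic quadratic differential (in any local complex chart $z$, writing $\varphi = f(z)\,\mathrm{d} z^2$ with $f$ holomorphic and $\boldsymbol{H}$ holomorphic gives $\psi = \boldsymbol{H}(z)f(z)\,\mathrm{d} z^2$ with holomorphic coefficient), the conclusion follows immediately.

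There is essentially no obstacle here: the work has already been done in Proposition~\ref{P:TFF} and Corollary~\ref{C:Hopf-differential}, and this corollary is a two-line assembly. The only minor point to verify is that the tensorial identity $\psi = \boldsymbol{H}\varphi$ transfers cleanly to the statement about the $\mathrm{d} z^2$-coefficients in any local complex chart, which is immediate from Definitions~\ref{D:Hopf-differential} and~\ref{D:TFF}.
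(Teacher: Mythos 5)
Your proposal is correct and is exactly the argument the paper intends: the corollary is stated without proof precisely because it follows by combining Corollary~\ref{C:Hopf-differential} (holomorphicity of \( \boldsymbol{H} \) gives holomorphicity of \( \varphi \) when \( L = 0 \)) with the identity \( \psi = \boldsymbol{H} \varphi \) from Proposition~\ref{P:TFF}. Your remark that the product of a holomorphic function with a holomorphic quadratic differential is again holomorphic, checked in a local chart, is the only content needed and is handled correctly.
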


\subsection{Minimal surfaces} \label{SS:minimal}

In this subsection, we discuss minimal surfaces in the context of Riemann-Cartan geometry. We will reveal a close relationship between the conformality of the Gauss map and the condition \( \boldsymbol{H} = 0 \). This extends the classical fact that minimal surfaces in \( \mathbb{R}^3 \) are completely characterized by the conformality of the Gauss map. We start with introducing several geometric concepts in Riemann-Cartan geometry.

\begin{pdef} \label{D:minimal}
Let \( S \) be an oriented smooth surface embedded in an oriented Riemann-Cartan \( 3 \)-manifold.
\begin{enumerate}
\item \( S \) is said to be \emph{minimal} iff \( \boldsymbol{H} = 0 \) everywhere.
\item \( S \) is said to be \emph{umbilic} at \( x \in S \) iff the Weingarten map of \( S \) is given by
\begin{align}
W & = \frac{1}{2} \begin{pmatrix*}[r]
H & - {\star} \tau \\
{\star} \tau & H
\end{pmatrix*} \quad \text{at} \ x \label{E:umbilic}
\end{align}
relative to some (equivalently, any) oriented orthonormal basis for \( T_x S \).
\item \( S \) is said to be \emph{totally umbilic} iff it is umbilic everywhere.
\end{enumerate}
\end{pdef}

\noindent In Definition~\ref{D:minimal}, when the ambient \( 3 \)-manifold is torsion-free (i.e., the Levi-Civita case), then all notions correspond to the usual definitions in Riemannian geometry. Also, it is worth pointing out that~\eqref{E:umbilic} is expressed as \( W = \frac{1}{2} \boldsymbol{H} \) under the matrix representation of complex numbers.

The Hopf differential \( \varphi \) measures umbilicity in the following sense.

\begin{pprop} \label{P:umbilic}
Let \( S \) be an oriented smooth surface embedded in an oriented Riemann-Cartan\/ \( 3 \)-manifold. Let \( \varphi \) be the Hopf differential of \( S \). Then the following two hold.
\begin{enumerate}
\item (Local version) \( \varphi = 0 \) at a point in \( S \) iff \( S \) is umbilic at the point.
\item (Global version) \( \varphi \) vanishes everywhere iff \( S \) is totally umbilic.
\end{enumerate}
\end{pprop}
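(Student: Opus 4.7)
The plan is to reduce part (a) to a $2 \times 2$ matrix computation in an isothermal chart, exploiting the fact that Proposition~\ref{P:SSFF} already pins down the skew part of $\II$ in terms of $\tau$, so the substantive content concerns only the symmetric part; part (b) is then immediate from (a) by applying it pointwise. I would fix $x \in S$ and choose a local complex chart $z = x + \boldsymbol{i} y$ at $x$ in which the induced metric reads $\lambda^2 {\left( \mathrm{d} x^2 + \mathrm{d} y^2 \right)}$ for some smooth positive $\lambda$ (isothermal coordinates), so that $(E_1, E_2) = (\lambda^{-1} \partial_x, \lambda^{-1} \partial_y)$ is a local oriented orthonormal frame for $S$. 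Writing $A_{ij} = \II(E_i, E_j)$ and expanding $\partial_z = \tfrac{1}{2} (\partial_x - \boldsymbol{i} \partial_y)$ bilinearly,
\begin{align*}
\frac{4}{\lambda^2} \II {\left( \frac{\partial}{\partial z}, \frac{\partial}{\partial z} \right)} &= (A_{11} - A_{22}) - \boldsymbol{i} (A_{12} + A_{21}),
\end{align*}
so $\varphi$ vanishes at $x$ iff the symmetric part of $A$ is a scalar multiple of the identity at $x$.

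I would then identify $A$ with the Weingarten map. From $\II(X, Y) = {\left\langle W(X), Y \right\rangle}$ (Definition~\ref{D:Weingarten-map}), we have $A_{ij} = {\left\langle W(E_i), E_j \right\rangle}$, so $A$ is the transpose of the matrix of $W$ relative to $(E_1, E_2)$. By Proposition~\ref{P:SSFF}, the skew part of $A$ is automatically determined by $\tau$:
\begin{align*}
A_{12} - A_{21} &= \II(E_1, E_2) - \II(E_2, E_1) = \tau(E_1, E_2) = {\star} \tau \quad \text{at} \ x,
\end{align*}
while $\tr A = \tr W = H$. If $S$ is umbilic at $x$, inserting the prescribed form of $W$ from Definition~\ref{D:minimal} yields $A_{11} = A_{22} = H/2$ and $A_{12} + A_{21} = 0$, hence $\varphi = 0$ at $x$. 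Conversely, if $\varphi = 0$ at $x$, the symmetric part of $A$ is forced to equal $(H/2) I$ at $x$, and combining with the skew part already determined recovers the umbilic matrix form of $W$ there.

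Two small observations finish part (a), after which (b) follows by pointwise application. First, the umbilic normal form lies in the commutative subalgebra $\mathbb{R} I + \mathbb{R} J \subset \mathfrak{gl}(2, \mathbb{R})$, where $J$ denotes the standard $90^\circ$ rotation matrix, so it is invariant under $\mathrm{SO}(2)$-conjugation; this justifies the ``some (equivalently, any)'' clause in Definition~\ref{D:minimal}. Second, the vanishing of $\II(\partial_z, \partial_z)$ is chart-independent, since a change of complex chart multiplies the coefficient by a nonzero holomorphic factor. The main bookkeeping hazard will be keeping the transpose straight in the identification $A = W^T$ while separating symmetric and skew parts; once that is settled, the argument is a direct matrix computation and requires no deeper machinery.
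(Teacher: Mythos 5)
Your proposal is correct and follows essentially the same route as the paper: isothermal coordinates, the expansion \( 4 \II {\left( \partial_z, \partial_z \right)} = \lambda^2 {\left( {\left( W^1_1 - W^2_2 \right)} - \boldsymbol{i} {\left( W^2_1 + W^1_2 \right)} \right)} \), and matching against \( H = W^1_1 + W^2_2 \) and \( {\star} \tau = W^2_1 - W^1_2 \) to recover the umbilic normal form. The only cosmetic difference is your explicit bookkeeping of \( A = W^{\mathsf{T}} \) and the two closing remarks on \( \mathrm{SO}(2) \)-invariance and chart independence, which the paper leaves implicit.
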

\begin{proof}
We only need to prove~(a), since~(b) is immediate from~(a). Fix a point in \( S \) and consider locally near the point. Let \( z = x + \boldsymbol{i} y \) be a local complex chart on \( S \) such that the metric on \( S \) is of the form \( \lambda^2(z) \, \mathrm{d} z \, \mathrm{d} \bar{z} \) for some positive \( \lambda \). Then
\begin{align*}
4 \II {\left( \frac{\partial}{\partial z}, \frac{\partial}{\partial z} \right)} & = {\left( \II {\left( \frac{\partial}{\partial x}, \frac{\partial}{\partial x} \right)} - \II {\left( \frac{\partial}{\partial y}, \frac{\partial}{\partial y} \right)} \right)} - \boldsymbol{i} {\left( \II {\left( \frac{\partial}{\partial x}, \frac{\partial}{\partial y} \right)} + \II {\left( \frac{\partial}{\partial y}, \frac{\partial}{\partial x} \right)} \right)} \\
& = \lambda^2 {\left( W^1_1 - W^2_2 \right)} - \lambda^2 \boldsymbol{i} {\left( W^2_1 + W^1_2 \right)}
\end{align*}
relative to the local oriented orthonormal smooth frame \( {\big( \frac{1}{\lambda} \frac{\partial}{\partial x}, \frac{1}{\lambda} \frac{\partial}{\partial y} \big)} \) for \( S \). This implies that \( \varphi = 0 \) at the point iff \( W^1_1 = W^2_2 \) and \( W^2_1 = - W^1_2 \). Recall that \( H = W^1_1 + W^2_2 \) and \( {\star} \tau = W^2_1 - W^1_2 \). This completes the proof.
\end{proof}

For the quadratic differential \( \psi \) in Definition~\ref{D:TFF}, we have the following.

\begin{pprop} \label{P:TFF-minimal}
Let \( S \) be an oriented smooth surface embedded in an oriented Riemann-Cartan\/ \( 3 \)-manifold. Let \( \psi \) be the quadratic differential on \( S \) in Definition~\ref{D:TFF}. Then the following two hold.
\begin{enumerate}
\item (Local version) \( \psi = 0 \) at a point in \( S \) iff \( \boldsymbol{H} = 0 \) or \( S \) is umbilic at the point.
\item (Global version) If \( S \) is connected and nowhere geodesic (i.e.,\/ \( \II \ne 0 \) everywhere), then \( \psi \) vanishes everywhere iff \( S \) is minimal or totally umbilic.
\end{enumerate}
\end{pprop}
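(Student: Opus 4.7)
The plan is to reduce the statement to two facts already in hand: Proposition~\ref{P:TFF}, which yields the factorization \( \psi = \boldsymbol{H} \varphi \), and Proposition~\ref{P:umbilic}, which characterizes the vanishing of \( \varphi \) as umbilicity.

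For part~(a), I would invoke the factorization directly. At each point, \( \psi \) and \( \varphi \) are complex multiples of \( \mathrm{d} z^2 \), so \( \psi = \boldsymbol{H} \varphi \) gives \( \psi = 0 \) at the point iff \( \boldsymbol{H} = 0 \) there or \( \varphi = 0 \) there. By Proposition~\ref{P:umbilic}(a) applied pointwise, the second alternative is exactly umbilicity at the point, so~(a) is essentially algebraic.

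For part~(b), the implication ``minimal or totally umbilic \( \Rightarrow \psi \equiv 0 \)'' is immediate from~(a). For the converse, I would set
\begin{align}
A & = {\left\{ x \in S : \boldsymbol{H}(x) = 0 \right\}} \quad \text{and} \quad B = {\left\{ x \in S : S \text{ is umbilic at } x \right\}},
\end{align}
both of which are closed by continuity: for \( B \), umbilicity amounts to the closed conditions \( W^1_1 = W^2_2 \) and \( W^1_2 + W^2_1 = 0 \) on the entries of the Weingarten map relative to any local oriented orthonormal smooth frame for \( S \). By part~(a), the hypothesis \( \psi \equiv 0 \) gives \( S = A \cup B \).

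The key step, and the place where the nowhere-geodesic hypothesis enters, is to show that \( A \cap B = \emptyset \). At an umbilic point the Weingarten map takes the form
\begin{align}
W & = \frac{1}{2} \begin{pmatrix*}[r] H & - {\star} \tau \\ {\star} \tau & H \end{pmatrix*},
\end{align}
so if additionally \( \boldsymbol{H} = 0 \), then \( H = 0 \) and \( {\star} \tau = 0 \), whence \( W = 0 \) and \( \II = 0 \); this contradicts the hypothesis that \( S \) is nowhere geodesic. Hence \( A \) and \( B \) are disjoint closed sets whose union is the connected manifold \( S \), so one of them must be empty and the other must equal \( S \), which yields the desired dichotomy. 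I do not foresee a serious obstacle: all the geometric content is packaged into Propositions~\ref{P:TFF} and~\ref{P:umbilic}, and what remains is a standard connectedness bookkeeping plus the explicit matrix form of \( W \) at an umbilic point.
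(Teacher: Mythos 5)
Your proposal is correct and follows essentially the same route as the paper: part~(a) from \( \psi = \boldsymbol{H} \varphi \) together with Proposition~\ref{P:umbilic}, and part~(b) by writing \( S \) as the union of the closed sets \( {\left\{ \boldsymbol{H} = 0 \right\}} \) and \( {\left\{ \varphi = 0 \right\}} \), using the nowhere-geodesic hypothesis to show they are disjoint, and concluding by connectedness. Your sets \( A \) and \( B \) coincide with the paper's \( {\left\{ \boldsymbol{H} = 0 \right\}} \) and \( {\left\{ \varphi = 0 \right\}} \), and the disjointness argument via the umbilic form of \( W \) is exactly the paper's identity \( {\left\{ \II = 0 \right\}} = {\left\{ \boldsymbol{H} = 0 \right\}} \cap {\left\{ \varphi = 0 \right\}} \).
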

\begin{proof}
(a) follows from Propositions~\ref{P:TFF} and~\ref{P:umbilic}. The ``if'' direction of~(b) is immediate from~(a). Consider the ``only if'' direction of~(b). As subsets of \( S \),
\begin{align*}
S & = {\left\{ \psi = 0 \right\}} = {\left\{ \boldsymbol{H} = 0 \right\}} \cup {\left\{ \varphi = 0 \right\}} \quad \text{and} \quad \varnothing = {\left\{ \II = 0 \right\}} = {\left\{ \boldsymbol{H} = 0 \right\}} \cap {\left\{ \varphi = 0 \right\}}.
\end{align*}
The sets \( {\left\{ \boldsymbol{H} = 0 \right\}} \) and \( {\left\{ \varphi = 0 \right\}} \) are closed in \( S \). They form a disjoint open covering of \( S \). As \( S \) is connected, one of them must be empty.
\end{proof}

In fact, the quadratic differential \( \psi \) in Definition~\ref{D:TFF} is closely related to the conformality of the Gauss map. It would be a good habit to state the definition of conformality that this paper follows, since there are a lot of different conventions.

\begin{pdef}
A smooth map \( F \colon {\left( \Sigma_1, g_1 \right)} \to {\left( \Sigma_2, g_2 \right)} \) between Riemannian manifolds is said to be \emph{conformal} or \emph{angle-preserving} at \( x \in \Sigma_1 \) iff \( F^* g_2 = k g_1 \) at \( x \) for some \( k > 0 \). It is said to be \emph{conformal} iff it is conformal everywhere.
\end{pdef}

\noindent Note that we do not require a conformal map to be a diffeomorphism. The following provides a simple criterion for the conformality of smooth maps between surfaces.

\begin{plem} \label{L:conformal}
A smooth map \( F \colon \Sigma_1 \to \Sigma_2 \) between Riemannian surfaces is conformal at a point in\/ \( \Sigma_1 \) iff
\begin{align}
{\left\langle \frac{\partial F}{\partial x}, \frac{\partial F}{\partial x} \right\rangle}_{\Sigma_2} & = {\left\langle \frac{\partial F}{\partial y}, \frac{\partial F}{\partial y} \right\rangle}_{\Sigma_2} > 0 \quad \text{and} \quad {\left\langle \frac{\partial F}{\partial x}, \frac{\partial F}{\partial y} \right\rangle}_{\Sigma_2} = 0
\end{align}
at the point, where \( z = x + \boldsymbol{i} y \) is a local complex chart on\/ \( \Sigma_1 \) such that the metric on\/ \( \Sigma_1 \) is of the form \( \lambda^2(z) \, \mathrm{d} z \, \mathrm{d} \bar{z} \) for some positive \( \lambda \).
\end{plem}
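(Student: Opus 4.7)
The plan is to unwind both sides of the equivalence into the coordinate matrix of the relevant bilinear forms, using the fact that the complex chart $z = x + \boldsymbol{i} y$ is isothermal. Because $\mathrm{d} z \, \mathrm{d} \bar{z}$ is shorthand for the symmetric product, the metric on $\Sigma_1$ in these coordinates has matrix $\lambda^2(z) \, I_2$ relative to the frame $\big( \frac{\partial}{\partial x}, \frac{\partial}{\partial y} \big)$, i.e.,
\begin{align*}
{\left\langle \tfrac{\partial}{\partial x}, \tfrac{\partial}{\partial x} \right\rangle}_{\Sigma_1} = {\left\langle \tfrac{\partial}{\partial y}, \tfrac{\partial}{\partial y} \right\rangle}_{\Sigma_1} = \lambda^2 \quad \text{and} \quad {\left\langle \tfrac{\partial}{\partial x}, \tfrac{\partial}{\partial y} \right\rangle}_{\Sigma_1} = 0.
\end{align*}

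First I would compute the matrix of the pullback $F^* g_2$ at the point in the same frame. Since $F_* \frac{\partial}{\partial x} = \frac{\partial F}{\partial x}$ and $F_* \frac{\partial}{\partial y} = \frac{\partial F}{\partial y}$, its entries are exactly the three inner products $\big\langle \frac{\partial F}{\partial x}, \frac{\partial F}{\partial x} \big\rangle_{\Sigma_2}$, $\big\langle \frac{\partial F}{\partial x}, \frac{\partial F}{\partial y} \big\rangle_{\Sigma_2}$, and $\big\langle \frac{\partial F}{\partial y}, \frac{\partial F}{\partial y} \big\rangle_{\Sigma_2}$ appearing in the statement. Next I would expand the conformality condition $F^* g_2 = k g_1$ into these three scalar equalities; the diagonal entries force
\begin{align*}
{\left\langle \tfrac{\partial F}{\partial x}, \tfrac{\partial F}{\partial x} \right\rangle}_{\Sigma_2} = k \lambda^2 = {\left\langle \tfrac{\partial F}{\partial y}, \tfrac{\partial F}{\partial y} \right\rangle}_{\Sigma_2},
\end{align*}
and the off-diagonal entry forces $\big\langle \frac{\partial F}{\partial x}, \frac{\partial F}{\partial y} \big\rangle_{\Sigma_2} = 0$.

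For the forward direction, the positivity $k > 0$ combined with $\lambda > 0$ gives the strict inequality in the conclusion. For the reverse direction, I would simply set $k = \lambda^{-2} \big\langle \frac{\partial F}{\partial x}, \frac{\partial F}{\partial x} \big\rangle_{\Sigma_2}$, which is strictly positive by the assumed strict inequality, and observe that all three matrix entry conditions for $F^* g_2 = k g_1$ are then satisfied. There is no real obstacle here; the only point requiring care is tracking the strict positivity so that one lands in the conformal case rather than a degenerate one (e.g., at a critical point of $F$), which is precisely why the hypothesis is phrased as a strict inequality.
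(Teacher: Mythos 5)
Your proposal is correct and is essentially the paper's own argument: the paper likewise reduces the identity $F^* g_2 = k g_1$ by bilinearity to the three inner products of the coordinate frame $\bigl( \frac{\partial}{\partial x}, \frac{\partial}{\partial y} \bigr)$, with the same choice $k = \lambda^{-2} \bigl\langle \frac{\partial F}{\partial x}, \frac{\partial F}{\partial x} \bigr\rangle_{\Sigma_2} > 0$ in the reverse direction. Your matrix phrasing is just a repackaging of that same check, so there is nothing further to add.
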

\begin{proof}
The ``only if'' direction is straightforward. For the ``if'' direction, we claim that \( {\left\langle F_* X, F_* Y \right\rangle}_{\Sigma_2} = k {\left\langle X, Y \right\rangle}_{\Sigma_1} \) for all vectors \( X \) and \( Y \) at the point, where
\begin{align}
k & = \frac{1}{\lambda^2} {\left\langle \frac{\partial F}{\partial x}, \frac{\partial F}{\partial x} \right\rangle}_{\Sigma_2} = \frac{1}{\lambda^2} {\left\langle \frac{\partial F}{\partial y}, \frac{\partial F}{\partial y} \right\rangle}_{\Sigma_2} > 0.
\end{align}
Due to the \( \mathbb{R} \)-bilinearity of the desired equality, it suffices to check the four cases \( X, Y \in {\big\{ \frac{\partial}{\partial x}, \frac{\partial}{\partial y} \big\}} \), but these cases are obvious.
\end{proof}

\begin{pprop} \label{P:TFF-conformal}
Let \( S \) be an oriented smooth surface embedded in an oriented flat Riemann-Cartan\/ \( 3 \)-manifold \( M \). Let \( \pi \colon FM \surjto M \) be the oriented orthonormal frame bundle, which has the flat connection \( \omega \in \Omega^1 {\left( FM, \mathfrak{so}(3) \right)} \). Let \( \psi \) be the quadratic differential on \( S \) in Definition~\ref{D:TFF}. Then the following two hold.
\begin{enumerate}
\item (Local version) Let \( s \) be a local \( \omega \)-horizontal smooth section of \( \pi \colon FM \surjto M \) near a point in \( S \). Then the local Gauss map \( n^s \) is conformal at the point iff \( \psi = 0 \) and\/ \( \II \ne 0 \) at the point.
\item (Global version) The local Gauss map with respect to any local \( \omega \)-horizontal smooth section of \( \pi \colon FM \surjto M \) is conformal iff \( \psi \) vanishes everywhere and \( S \) is nowhere geodesic (i.e.,\/ \( \II \ne 0 \) everywhere).
\end{enumerate}
\end{pprop}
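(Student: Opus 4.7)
My plan is to exploit flatness of $\omega$ to trivialize the ambient connection locally, and then recognize $\III$ as the pullback of the round metric on $\mathbb{S}^2$ via the Gauss map. If $s = {\left( E_1, E_2, E_3 \right)}$ is a local $\omega$-horizontal oriented orthonormal smooth frame for $M$, then $s^* \omega = 0$, so via the correspondence~\eqref{E:connection} the connection coefficients of $\tilde{\del}$ with respect to $s$ all vanish, i.e.\ $\tilde{\del}_X E_i = 0$ for every vector $X$ and every $i$. Writing $N = \sum_{i=1}^{3} N^i E_i$ and invoking Proposition~\ref{P:Weingarten-equation} then gives
\begin{align*}
W(X) \;=\; - \tilde{\del}_X N \;=\; - \sum_{i=1}^{3} \mathrm{d} N^i (X) \, E_i,
\end{align*}
which is the local analogue of Proposition~\ref{P:Weingarten-map} in the present non-parallelizable setting.

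Since $\{ E_1, E_2, E_3 \}$ is orthonormal, this yields
\begin{align*}
\III(X,Y) \;=\; {\left\langle W(X), W(Y) \right\rangle} \;=\; \sum_{i=1}^{3} \mathrm{d} N^i(X) \, \mathrm{d} N^i(Y) \;=\; {\left\langle \mathrm{d} n^s(X), \mathrm{d} n^s(Y) \right\rangle}_{\mathbb{R}^3},
\end{align*}
so $\III = {\left( n^s \right)}^* g_{\mathbb{S}^2}$ along the domain of $s$. By Lemma~\ref{L:conformal}, $n^s$ is conformal at a point iff, in an isothermal chart $z = x + \boldsymbol{i} y$ with metric $\lambda^2 \, \mathrm{d} z \, \mathrm{d} \bar{z}$ near the point,
\begin{align*}
\III {\left( \tfrac{\partial}{\partial x}, \tfrac{\partial}{\partial x} \right)} = \III {\left( \tfrac{\partial}{\partial y}, \tfrac{\partial}{\partial y} \right)} > 0 \quad \text{and} \quad \III {\left( \tfrac{\partial}{\partial x}, \tfrac{\partial}{\partial y} \right)} = 0.
\end{align*}
Expanding $4 \, \III {\left( \tfrac{\partial}{\partial z}, \tfrac{\partial}{\partial z} \right)}$ in $\tfrac{\partial}{\partial x} \pm \boldsymbol{i} \tfrac{\partial}{\partial y}$ translates the two equalities (ignoring positivity) into $\psi = 0$ at the point.

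The positivity clause reduces to $\III \ne 0$ at the point: since $\III {\left( \tfrac{\partial}{\partial x}, \tfrac{\partial}{\partial x} \right)} = {\left| W {\left( \tfrac{\partial}{\partial x} \right)} \right|}^2 \ge 0$ and $\psi = 0$ forces the two diagonal entries to agree, $\III$ either vanishes entirely at the point (precisely when $W = 0$ there, i.e.\ $\II = 0$) or is a strictly positive multiple of the induced metric (precisely when $W \ne 0$, i.e.\ $\II \ne 0$). This proves~(a). Part~(b) is a direct globalization: flatness of $\omega$ guarantees that local $\omega$-horizontal smooth sections exist near every point of $M$, while the conditions ``$\psi = 0$'' and ``$\II \ne 0$'' are intrinsic to $S$ (independent of the choice of frame on $M$), so applying~(a) pointwise yields~(b). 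The main conceptual step is the identification $\III = {\left( n^s \right)}^* g_{\mathbb{S}^2}$; after that, matching the $k > 0$ clause in the definition of conformality to the nonvanishing of $\II$ is the only bookkeeping required.
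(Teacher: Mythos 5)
Your proposal is correct and follows essentially the same route as the paper: both use the horizontality of \( s \) to write \( W(X) = -\tilde{\del}_X N = -\sum_i \mathrm{d} N^i(X) E_i \), identify \( \III \) with \( {\left( n^s \right)}^* g_{\mathbb{S}^2} \) (the paper does this as a chain of equivalences rather than as a named identity), and then invoke Lemma~\ref{L:conformal} together with the expansion of \( 4 \III {\left( \frac{\partial}{\partial z}, \frac{\partial}{\partial z} \right)} \) and the equivalence \( W = 0 \Leftrightarrow \II = 0 \) to handle the positivity clause. The bookkeeping on the positivity clause and the reduction of~(b) to~(a) are both sound.
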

\begin{proof}
We only need to prove~(a), since~(b) is immediate from~(a). Fix a point in \( S \) and consider locally near the point. Let \( z = x + \boldsymbol{i} y \) be a local complex chart on \( S \) such that the metric on \( S \) is of the form \( \lambda^2(z) \, \mathrm{d} z \, \mathrm{d} \bar{z} \) for some positive \( \lambda \). Let \( s = {\left( E_1, E_2, E_3 \right)} \). Then \( n^s \) is conformal at the point iff
\begin{align*}
\frac{\partial n^s}{\partial x} \cdot \frac{\partial n^s}{\partial x} & = \frac{\partial n^s}{\partial y} \cdot \frac{\partial n^s}{\partial y} > 0 \quad \text{and} \quad \frac{\partial n^s}{\partial x} \cdot \frac{\partial n^s}{\partial y} = 0
\end{align*}
at the point (where the dot product is taken in \( \mathbb{S}^2 \subseteq \mathbb{R}^3 \)) iff
\begin{align*}
{\left\langle \frac{\partial N^i}{\partial x} E_i, \frac{\partial N^j}{\partial x} E_j \right\rangle} & = {\left\langle \frac{\partial N^i}{\partial y} E_i, \frac{\partial N^j}{\partial y} E_j \right\rangle} > 0 \quad \text{and} \quad {\left\langle \frac{\partial N^i}{\partial x} E_i, \frac{\partial N^j}{\partial y} E_j \right\rangle} = 0
\end{align*}
at the point (where the Einstein summation convention is assumed) iff
\begin{align*}
{\left\langle \del_{\frac{\partial}{\partial x}} N, \del_{\frac{\partial}{\partial x}} N \right\rangle} & = {\left\langle \del_{\frac{\partial}{\partial y}} N, \del_{\frac{\partial}{\partial y}} N \right\rangle} > 0 \quad \text{and} \quad {\left\langle \del_{\frac{\partial}{\partial x}} N, \del_{\frac{\partial}{\partial y}} N \right\rangle} = 0
\end{align*}
at the point iff
\begin{align*}
{\left\langle W \frac{\partial}{\partial x}, W \frac{\partial}{\partial x} \right\rangle} & = {\left\langle W \frac{\partial}{\partial y}, W \frac{\partial}{\partial y} \right\rangle} > 0 \quad \text{and} \quad {\left\langle W \frac{\partial}{\partial x}, W \frac{\partial}{\partial y} \right\rangle} = 0
\end{align*}
at the point iff
\begin{align*}
\III {\left( \frac{\partial}{\partial x}, \frac{\partial}{\partial x} \right)} & = \III {\left( \frac{\partial}{\partial y}, \frac{\partial}{\partial y} \right)} > 0 \quad \text{and} \quad \III {\left( \frac{\partial}{\partial x}, \frac{\partial}{\partial y} \right)} = 0
\end{align*}
at the point iff \( \III {\left( \frac{\partial}{\partial z}, \frac{\partial}{\partial z} \right)} = 0 \) and \( W \ne 0 \) at the point.
\end{proof}

We present the main theorem of this subsection.

\begin{pthrm} \label{T:main}
Let \( S \) be an oriented smooth surface embedded in an oriented flat Riemann-Cartan\/ \( 3 \)-manifold \( M \). Let \( \pi \colon FM \surjto M \) be the oriented orthonormal frame bundle, which has the flat connection \( \omega \in \Omega^1 {\left( FM, \mathfrak{so}(3) \right)} \). Then the following two hold.
\begin{enumerate}
\item (Local version) Let \( s \) be a local \( \omega \)-horizontal smooth section of \( \pi \colon FM \surjto M \) near a point in \( S \). Then the local Gauss map \( n^s \) is conformal at the point iff\/ \( \II \ne 0 \) at the point and either \( \boldsymbol{H} = 0 \) or \( S \) is umbilic at the point.
\item (Global version) If \( S \) is connected, then the local Gauss map with respect to any local \( \omega \)-horizontal smooth section of \( \pi \colon FM \surjto M \) is conformal iff \( S \) is nowhere geodesic and either it is minimal or totally umbilic.
\end{enumerate}
\end{pthrm}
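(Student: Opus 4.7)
The plan is to read Theorem~\ref{T:main} as an immediate corollary of Propositions~\ref{P:TFF-conformal} and~\ref{P:TFF-minimal}, chaining the two equivalences through the quadratic differential $\psi$ introduced in Definition~\ref{D:TFF}.

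For part~(a), I would start from Proposition~\ref{P:TFF-conformal}(a), which in the flat Riemann-Cartan setting asserts that $n^s$ is conformal at the given point if and only if $\psi = 0$ and $\II \ne 0$ at that point. I would then rewrite the pointwise vanishing $\psi = 0$ using Proposition~\ref{P:TFF-minimal}(a), which equates $\psi = 0$ at a point with the disjunction ``$\boldsymbol{H} = 0$ or $S$ is umbilic at the point.'' Concatenating these two equivalences yields exactly the statement in~(a). Nothing new needs to be computed, since the algebraic bridge $\psi = \boldsymbol{H} \varphi$ from Proposition~\ref{P:TFF}, together with the characterization of umbilicity as $\varphi = 0$ from Proposition~\ref{P:umbilic}(a), already does all the work.

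For part~(b), I would run the same chaining but split it into the two implications in order to keep the logical roles of the hypotheses transparent. For the forward direction, assume the local Gauss map with respect to any local $\omega$-horizontal smooth section is conformal; then Proposition~\ref{P:TFF-conformal}(b) gives $\psi \equiv 0$ on $S$ and $S$ nowhere geodesic, and now the hypotheses of Proposition~\ref{P:TFF-minimal}(b) (connectedness plus nowhere geodesic) are in force, so $\psi \equiv 0$ upgrades to ``$S$ is minimal or totally umbilic.'' For the converse, assume $S$ is nowhere geodesic and either minimal or totally umbilic; Proposition~\ref{P:TFF-minimal}(b) applies again to yield $\psi \equiv 0$ on $S$, after which Proposition~\ref{P:TFF-conformal}(b) delivers conformality of $n^s$.

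The only point worth stating carefully---rather than any genuine technical obstacle---is that the ``nowhere geodesic'' clause must appear on both sides of the equivalence in~(b): the connectedness hypothesis of Proposition~\ref{P:TFF-minimal}(b) alone is not enough to eliminate it, because $\psi \equiv 0$ is consistent with $\II \equiv 0$, and the latter would violate the positivity $k > 0$ built into the definition of conformal map (compare Lemma~\ref{L:conformal}). Tracking this condition carefully through both directions is the whole content of the argument; beyond that the proof is simply a transitive use of the two prior propositions.
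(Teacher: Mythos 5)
Your proposal is correct and matches the paper's own argument, which simply states that Theorem~\ref{T:main} is an immediate corollary of Propositions~\ref{P:TFF-minimal} and~\ref{P:TFF-conformal} chained through the vanishing of \( \psi \). Your explicit tracking of the ``nowhere geodesic'' clause in part~(b) is a sound elaboration of the same route, not a deviation from it.
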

\begin{proof}
This is an immediate corollary of Propositions~\ref{P:TFF-minimal} and~\ref{P:TFF-conformal}.
\end{proof}

\noindent The Weitzenb{\"o}ck version is the following.

\begin{pcor} \label{C:main}
Let \( S \) be an oriented smooth surface embedded in a Weitzenb{\"o}ck\/ \( 3 \)-manifold \( M \) with global smooth frame \( s \). Then the following two hold.
\begin{enumerate}
\item (Local version) The Gauss map \( n^s \colon S \to \mathbb{S}^2 \) is conformal at a point in \( S \) iff\/ \( \II^s \ne 0 \) at the point and either \( \boldsymbol{H}^s = 0 \) or \( S \) is umbilic at the point.
\item (Global version) If \( S \) is connected, then the Gauss map \( n^s \colon S \to \mathbb{S}^2 \) is conformal iff \( S \) is nowhere geodesic and either it is minimal or totally umbilic.
\end{enumerate}
\end{pcor}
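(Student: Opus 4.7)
The plan is to derive Corollary~\ref{C:main} as an immediate specialization of Theorem~\ref{T:main}. A Weitzenböck $3$-manifold $M$ with global smooth frame $s$ is, by Proposition~\ref{P:Weitzenbock}(c), flat, and inherits an oriented Riemann-Cartan structure by Definition~\ref{D:Weitzenbock-affine}. So the ambient hypothesis of Theorem~\ref{T:main} is automatic, and the only real task is to match up the objects: namely, to interpret the globally defined Gauss map $n^s$ from Definition~\ref{D:Gauss-map} as a ``local Gauss map with respect to an $\omega$-horizontal smooth section'' in the sense of Theorem~\ref{T:main}.

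To do this, I would view the frame $s$ as a global smooth section $s \colon M \to FM$ of the oriented orthonormal frame bundle. The paper has already shown (in the proposition establishing the equivalence of Definitions~\ref{D:Weitzenbock-affine} and~\ref{D:Weitzenbock-principal}) that $s^* \omega^s = 0$, so $s$ is $\omega^s$-horizontal. In particular, on any open $U \subseteq M$, $s|_U$ is itself a local $\omega^s$-horizontal smooth section, and for such a choice the local Gauss map appearing in Theorem~\ref{T:main} is precisely $n^s|_{U \cap S}$ of Definition~\ref{D:Gauss-map}.

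With this identification, part~(a) follows by applying Theorem~\ref{T:main}(a) at the given point with the local section $s$ restricted to a neighborhood of that point. For part~(b), note that since $s$ is globally defined, the Gauss map $n^s \colon S \to \mathbb{S}^2$ is globally defined as well; if it is conformal everywhere, then by the local statement (a) applied at every point in $S$, at each point either $\boldsymbol{H}^s = 0$ or $S$ is umbilic, and $\II^s \neq 0$ everywhere. The connectedness hypothesis together with the connectedness argument already used in Proposition~\ref{P:TFF-minimal}(b) (which is folded into Theorem~\ref{T:main}(b)) then forces one of the two alternatives ``minimal'' or ``totally umbilic'' to hold globally. The reverse implication is the corresponding direction of Theorem~\ref{T:main}(b).

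I do not anticipate any serious obstacle: the only content beyond Theorem~\ref{T:main} is the horizontality $s^* \omega^s = 0$, which is already established in the paper, so the corollary is essentially a rephrasing in the more concrete Weitzenböck language.
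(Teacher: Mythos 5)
Your proposal is correct and follows exactly the route the paper intends: Corollary~\ref{C:main} is stated as the Weitzenb{\"o}ck specialization of Theorem~\ref{T:main}, and the only content to supply is that the Weitzenb{\"o}ck connection is flat (Proposition~\ref{P:Weitzenbock}) and that the global frame \( s \), viewed as a section of \( FM \surjto M \), is \( \omega^s \)-horizontal because \( s^* \omega^s = 0 \), so its restrictions serve as the local horizontal sections in Theorem~\ref{T:main} and the local Gauss maps there are exactly \( n^s \). Your identification and the passage from the local to the global statement match what the paper leaves implicit, so there is nothing to add.
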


This directly implies the following classical result about minimal surfaces. See, e.g., Definition~2.1.8 of~\cite{MR3012474}.

\begin{pcor} \label{C:classical}
Let \( S \) be an oriented smooth surface embedded in\/ \( \mathbb{R}^3 \).
\begin{enumerate}
\item (Local version) The Gauss map \( \vec{n} \colon S \to \mathbb{S}^2 \) is conformal at a point in \( S \) iff\/ \( \bar{\II} \ne 0 \) at the point and either \( \bar{H} = 0 \) or \( S \) is umbilic at the point.
\item (Global version) If \( S \) is connected, then the Gauss map \( \vec{n} \colon S \to \mathbb{S}^2 \) is conformal iff \( S \) is nowhere geodesic and either it is minimal or totally umbilic.
\end{enumerate}
\end{pcor}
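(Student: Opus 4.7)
The plan is to deduce Corollary~\ref{C:classical} directly from Corollary~\ref{C:main} by specializing to the Weitzenb{\"o}ck structure on $\mathbb{R}^3$ that is induced by the standard global smooth frame $s_0 = {\left( \frac{\partial}{\partial x^1}, \frac{\partial}{\partial x^2}, \frac{\partial}{\partial x^3} \right)}$. The content of the proof is therefore entirely in verifying that all the objects appearing in Corollary~\ref{M:D} specialize in this setting to the classical (bar-ed) Euclidean objects, after which the corollary follows by a one-line invocation.

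First I would observe, as in the remark following Definition~\ref{D:Weitzenbock-affine}, that the Weitzenb{\"o}ck Riemann-Cartan structure on $\mathbb{R}^3$ associated with $s_0$ coincides with the standard Euclidean structure. Indeed, the coordinate fields commute, so Proposition~\ref{P:Weitzenbock}(b) gives $T^{s_0} = 0$, and the connection $\del^{s_0}$ is a torsion-free metric-compatible connection on the standard Riemannian $\mathbb{R}^3$; by the uniqueness of the Levi-Civita connection, $\del^{s_0} = \bar{\del}$.

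Next I would record the immediate consequences of this identification for an oriented smooth surface $S \subseteq \mathbb{R}^3$: the Weingarten map, second fundamental form, and mean curvature satisfy $W^{s_0} = \bar{W}$, $\II^{s_0} = \bar{\II}$, and $H^{s_0} = \bar{H}$; moreover the torsion $2$-form $\tau^{s_0}$ of $S$ vanishes identically because the ambient torsion $\tilde{T}^{s_0}$ vanishes. Consequently
\begin{align*}
\boldsymbol{H}^{s_0} & = \bar{H} + \boldsymbol{i} \cdot 0 = \bar{H},
\end{align*}
so $\boldsymbol{H}^{s_0} = 0$ is the usual minimality condition, and the umbilic condition in Definition~\ref{D:minimal} reduces (since $\star \tau = 0$) to the classical one $\bar{W} = \tfrac{1}{2} \bar{H} \, \mathrm{id}$. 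Finally, if $N = N^1 \frac{\partial}{\partial x^1} + N^2 \frac{\partial}{\partial x^2} + N^3 \frac{\partial}{\partial x^3}$ is the unit normal, then by Definition~\ref{D:Gauss-map} the Weitzenb{\"o}ck Gauss map is $n^{s_0} = {\left( N^1, N^2, N^3 \right)}$, which is exactly the classical Gauss map $\vec{n} \colon S \to \mathbb{S}^2$.

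With all these identifications in place, applying parts~(a) and~(b) of Corollary~\ref{C:main} to $(M, s) = {\left( \mathbb{R}^3, s_0 \right)}$ yields the local and global statements of Corollary~\ref{C:classical} verbatim. There is no genuine obstacle in the argument; the only thing to be careful about is matching the sign and normalization conventions for $\bar{W}$, $\bar{\II}$, and $\vec{n}$ used implicitly on the classical side with those from Definitions~\ref{D:SSFF}, \ref{D:Weingarten-map}, and~\ref{D:Gauss-map}, which is immediate from Proposition~\ref{P:Weingarten-equation}.
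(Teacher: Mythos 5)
Your proposal is correct and is exactly the argument the paper intends: the paper states that Corollary~\ref{C:classical} follows directly from Corollary~\ref{C:main}, relying on the observation (already made after Definition~\ref{D:Weitzenbock-affine}) that the Weitzenb{\"o}ck structure on \( \mathbb{R}^3 \) induced by the standard frame coincides with the Euclidean Riemann-Cartan structure, so that \( \tau^{s_0} = 0 \), \( \boldsymbol{H}^{s_0} = \bar{H} \), and \( n^{s_0} = \vec{n} \). You have merely spelled out the identifications the paper leaves implicit.
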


\noindent There is a hyperbolic analogue found by Bryant~\cite{MR0955072}: In the hyperbolic \( 3 \)-space \( \mathbb{H}^3 \), if we use the orthogonal geodesic rays toward the \emph{sphere at infinity} to define the \emph{hyperbolic Gauss map} and consider surfaces having constant mean curvature \( 1 \) (or \( 2 \), following the convention of this paper) in place of minimal surfaces, then an analogous statement holds. However, it seems nontrivial whether it would follow from Theorem~\ref{T:main} or Corollary~\ref{C:main}. More generally, it would be interesting if there is a Riemannian analogue.

We end this section by providing several concrete examples, including totally geodesic surfaces, minimal surfaces, totally umbilic surfaces, and surfaces having holomorphic \( \boldsymbol{H} \) in Riemann-Cartan geometry with torsion.

\begin{pex}
A natural example one may consider would be \( \mathbb{R}^3 \) with a nontrivial global smooth frame. Let \( M = \mathbb{R}^3 \) be the Weitzenb{\"o}ck \( 3 \)-manifold with global smooth frame \( {\left( E_1, E_2, E_3 \right)} = {\left( \partial_1, \partial_2, \partial_3 \right)} \cdot g \), where \( {\left( \partial_1, \partial_2, \partial_3 \right)} \) is the standard global smooth frame and \( g = \boldsymbol{e}^{\theta \hat{e}} \colon \mathbb{R}^3 \to \mathrm{SO}(3) \) is the smooth map for some smooth function \( \theta \colon \mathbb{R}^3 \to \mathbb{R} \) and unit vector \( e \in \mathbb{S}^2 \). For convenience, let \( \bar{g} = g^{-1} \). Unless \( \theta \) is constant, \( M \) has torsion: For all cyclic triple \( {\left( i, j, k \right)} \), we have
\begin{align}
\begin{aligned}
T {\left( \partial_i, \partial_j \right)} & = \del_{\partial_i} \partial_j - \del_{\partial_j} \partial_i = {\left( \partial_i \bar{g}^{\ell}_j - \partial_j \bar{g}^{\ell}_i \right)} E_{\ell} = {\left( {\left( - \bar{g} \, \theta_i \hat{e} \right)}{}^{\ell}_j - {\left( - \bar{g} \, \theta_j \hat{e} \right)}{}^{\ell}_i \right)} E_{\ell} \\
& = {\left( - \theta_i \hat{e}^{\ell}_j + \theta_j \hat{e}^{\ell}_i \right)} \partial_{\ell} = - \theta_i \hat{e}^i_j \partial_i + \theta_j \hat{e}^j_i \partial_j + {\left( - \theta_i \hat{e}^k_j + \theta_j \hat{e}^k_i \right)} \partial_k \\
& = \theta_i e^k \partial_i + \theta_j e^k \partial_j - {\left( \theta_i e^i + \theta_j e^j \right)} \partial_k,
\end{aligned}
\end{align}
where \( \theta_i = \partial_i \theta \) and the Einstein summation convention is assumed.

Let \( S = {\left\{ z = 0 \right\}} \subseteq M \) be the \( xy \)-plane with \( N = \partial_3 \). Then the Weingarten map is given by \( W = - \mathrm{d} \bar{g}^i_3 \otimes E_i = {\left( \bar{g} \, \mathrm{d} \theta \hat{e} \right)}{}^i_3 \otimes E_i = \mathrm{d} \theta \hat{e}^i_3 \otimes \partial_i = \mathrm{d} \theta \otimes {\left( e^2 \partial_1 - e^1 \partial_2 \right)} \) by Proposition~\ref{P:Weingarten-map}, where the Einstein summation convention is assumed. That is,
\begin{align}
W & = \begin{pmatrix*}[r]
\theta_x e^2 & \theta_y e^2 \\
- \theta_x e^1 & - \theta_y e^1
\end{pmatrix*} \quad \text{relative to} \ {\left( \partial_1, \partial_2 \right)}. \label{E:W}
\end{align}
We have the following observations.
\begin{enumerate}
\item \( S \) is totally geodesic iff either \( \theta_x = \theta_y = 0 \) on \( S \) or \( e = {\left( 0, 0, \pm 1 \right)} \).
\item \( K = K^{\mathrm{e}} = 0 \) everywhere for any \( \theta \colon \mathbb{R}^3 \to \mathbb{R} \) and \( e \in \mathbb{S}^2 \).
\item \( \boldsymbol{H} = H + \boldsymbol{i} {\star} \tau = {\left( - \theta_y e^1 + \theta_x e^2 \right)} + \boldsymbol{i} {\left( - \theta_x e^1 - \theta_y e^2 \right)} \) by~\eqref{E:W}. This can be also easily found by Proposition~\ref{P:gauge-transformation}, since \( {\left( \partial_1, \partial_2, \partial_3 \right)}{}^{\top} = {\left( \partial_1, \partial_2, 0 \right)} \) and \( {\left( \partial_1, \partial_2, \partial_3 \right)}{}^{\times} = {\left( \partial_2, - \partial_1, 0 \right)} \). We also see that \( S \) is minimal iff it is totally geodesic in this example.
\item Note that
\begin{align}
\begin{aligned}
H_x & = - \theta_{xy} e^1 + \theta_{xx} e^2, & \quad {\left( {\star} \tau \right)}_y & = - \theta_{xy} e^1 - \theta_{yy} e^2, \\
H_y & = - \theta_{yy} e^1 + \theta_{xy} e^2, & \quad - {\left( {\star} \tau \right)}_x & = \theta_{xx} e^1 + \theta_{xy} e^2.
\end{aligned}
\end{align}
Therefore, \( \boldsymbol{H} \) is holomorphic on \( S \) iff either \( e = {\left( 0, 0, \pm 1 \right)} \) or \( \theta |_S \) is harmonic. Moreover, for any holomorphic function \( f \colon S \to \mathbb{C} \), there exists a global smooth frame so that \( \boldsymbol{H} = f \), since \( \boldsymbol{H} = \theta_y + \boldsymbol{i} \theta_x \) if \( e = {\left( -1, 0, 0 \right)} \).
\item By Proposition~\ref{P:SSFF}, we have
\begin{align}
\begin{aligned}
L {\left( \partial_1, \partial_2 \right)} & = - J_S W J_S T_S {\left( \partial_1, \partial_2 \right)} = - J_S W J_S {\left( \theta_x e^3 \partial_1 + \theta_y e^3 \partial_2 \right)} \\
& = - J_S W {\left( \theta_x e^3 \partial_2 - \theta_y e^3 \partial_1 \right)} = 0
\end{aligned}
\end{align}
for any \( \theta \colon \mathbb{R}^3 \to \mathbb{R} \) and \( e \in \mathbb{S}^2 \). The Hopf differential is given by
\begin{align}
\begin{aligned}
\varphi & = {\left\langle \frac{1}{2} {\left( \theta_x - \boldsymbol{i} \theta_y \right)} {\left( e^2 \partial_1 - e^1 \partial_2 \right)}, \frac{1}{2} {\left( \partial_1 - \boldsymbol{i} \partial_2 \right)} \right\rangle} {\left( \mathrm{d} x + \boldsymbol{i} \, \mathrm{d} y \right)}^2 \\
& = \frac{1}{4} {\left( {\left( \theta_y e^1 + \theta_x e^2 \right)} + \boldsymbol{i} {\left( \theta_x e^1 - \theta_y e^2 \right)} \right)} {\left( \mathrm{d} x + \boldsymbol{i} \, \mathrm{d} y \right)}^2.
\end{aligned}
\end{align}
Indeed, \( \boldsymbol{H} \) is holomorphic on \( S \) iff \( \varphi \) is a holomorphic quadratic differential on \( S \), which agrees with Corollary~\ref{C:Hopf-differential}. Also, \( S \) is umbilic exactly at the points where \( \varphi = 0 \), as in Proposition~\ref{P:umbilic}.
\end{enumerate}
\end{pex}

\begin{pex} \label{Ex:catenoid}
Recall that a catenoid in \( \mathbb{R}^3 \) is given by the parametrization
\begin{align}
X(u,v) & = {\left( \cosh v \cos u, \cosh v \sin u, v \right)} \qquad (u \in \mathbb{R} / 2 \boldsymbol{\pi} \mathbb{Z}, \ v \in \mathbb{R}).
\end{align}
Note that \( {\left( \bar{E}_1, \bar{E}_2, N \right)} = {\left( \partial_1, \partial_2, \partial_3 \right)} \cdot G {\left( u, v \right)} \), where \( {\left( \partial_1, \partial_2, \partial_3 \right)} \) is the standard global smooth frame and
\begin{align}
G {\left( u, v \right)} & = \begin{pmatrix*}[c]
- \sin u & \tanh v \cos u & \sech v \cos u \\
\cos u & \tanh v \sin u & \sech v \sin u \\
0 & \sech v & - \tanh v
\end{pmatrix*} \in \mathrm{SO}(3), \label{E:G}
\end{align}
defines a local oriented orthonormal smooth frame \( {\left( \bar{E}_1, \bar{E}_2 \right)} \) for the catenoid.

Motivated by this, let \( M = \mathbb{R}^3 \) be the Weitzenb{\"o}ck \( 3 \)-manifold with global smooth frame \( {\left( E_1, E_2, E_3 \right)} = {\left( \partial_1, \partial_2, \partial_3 \right)} \cdot g \), where \( g \colon \mathbb{R}^3 \to \mathrm{SO}(3) \) is the smooth map defined by \( g(x,y,z) = G(x,y)^{-1} \). Then \( M \) is not torsion-free:
\begin{align*}
T {\left( \partial_1, \partial_2 \right)} & = \del_{\partial_1} \partial_2 - \del_{\partial_2} \partial_1 = {\left( \partial_1 G^i_2 - \partial_2 G^i_1 \right)} E_i \\
& = {\left( - \tanh y \sin x \right)} E_1 + {\left( \tanh y \cos x \right)} E_2 = {\left( \tanh y \right)} \partial_1,
\end{align*}
where the Einstein summation convention is assumed.

Now, let \( S = {\left\{ z = 0 \right\}} \subseteq M \) be the \( xy \)-plane with \( N = \partial_3 \). Then the Weingarten map satisfies
\begin{align*}
W \partial_1 & = - {\left( \partial_1 G^i_3 \right)} E_i = {\left( \sech y \sin x \right)} E_1 - {\left( \sech y \cos x \right)} E_2 = - {\left( \sech y \right)} \partial_1, \\
W \partial_2 & = - {\left( \partial_2 G^i_3 \right)} E_i \\
& = {\left( \sech y \tanh y \cos x \right)} E_1 + {\left( \sech y \tanh y \sin x \right)} E_2 + {\left( \sech^2 y \right)} E_3 \\
& = {\left( \sech y \right)} \partial_2,
\end{align*}
where the Einstein summation convention is assumed. That is,
\begin{align}
W & = \begin{pmatrix*}[c]
- \sech y & 0 \\
0 & \sech y
\end{pmatrix*} \quad \text{relative to} \ {\left( \partial_1, \partial_2 \right)}.
\end{align}
Thus \( K = K^{\mathrm{e}} = - \sech^2 y < 0 \) but \( \boldsymbol{H} = H + \boldsymbol{i} {\star} \tau = 0 \). We see that \( S \) is a minimal surface in \( M \), which is nowhere geodesic and nowhere umbilic. Meanwhile, the Gauss map \( n = {\left( G^1_3, G^2_3, G^3_3 \right)} \colon S \to \mathbb{S}^2 \) is given by
\begin{align}
n(x,y) & = {\left( \sech y \cos x, \sech y \sin x, - \tanh y \right)},
\end{align}
which satisfies \( n_x \cdot n_x = n_y \cdot n_y = \sech^2 y > 0 \) and \( n_x \cdot n_y = 0 \) everywhere, so that it is conformal by Lemma~\ref{L:conformal}. This agrees with Corollary~\ref{C:main}.
\end{pex}

\begin{pex}
Example~\ref{Ex:catenoid} reverses the usual perspective: Instead of observing the catenoid through the standard frame, it observes the plane through a ``catenoid'' frame. Similarly, one can observe the cylinder through a ``catenoid'' frame as follows. Let \( S \) be a cylinder in \( \mathbb{R}^3 \) given by the parametrization
\begin{align}
X(u,v) & = {\left( \cos u, \sin u, v \right)} \qquad (u \in \mathbb{R} / 2 \boldsymbol{\pi} \mathbb{Z}, \ v \in \mathbb{R}).
\end{align}
Note that \( {\left( \bar{E}_1, \bar{E}_2, N \right)} = {\left( \partial_1, \partial_2, \partial_3 \right)} \cdot G' {\left( u, v \right)} \), where \( {\left( \partial_1, \partial_2, \partial_3 \right)} \) is the standard global smooth frame and
\begin{align}
G' {\left( u, v \right)} & = \begin{pmatrix*}[c]
- \sin u & 0 & \cos u \\
\cos u & 0 & \sin u \\
0 & 1 & 0
\end{pmatrix*} \in \mathrm{SO}(3),
\end{align}
defines a local oriented orthonormal smooth frame \( {\left( \bar{E}_1, \bar{E}_2 \right)} \) for the cylinder \( S \).

Let \( g \colon \mathbb{R}^3 \setminus {\left\{ \text{\( z \)-axis} \right\}} \to \mathrm{SO}(3) \) be the smooth map defined by
\begin{align}
g(x,y,z) & = G' G^{-1} {\left( u(x,y), z \right)},
\end{align}
where \( G \) is the matrix in~\eqref{E:G} and \( u(x,y) \) is the angle of \( (x,y) \) in polar coordinates. Let \( M = \mathbb{R}^3 \setminus {\left\{ \text{\( z \)-axis} \right\}} \) be the Weitzenb{\"o}ck \( 3 \)-manifold with global smooth frame \( {\left( E_1, E_2, E_3 \right)} = {\left( \partial_1, \partial_2, \partial_3 \right)} \cdot g \). Note that \( {\left( \bar{E}_1, \bar{E}_2, N \right)} = {\left( E_1, E_2, E_3 \right)} \cdot G(u,v) \). The Weingarten map of \( S \) in \( M \) satisfies
\begin{align*}
W \bar{E}_1 & = - \bar{E}_1 {\left( G^i_3 \right)} E_i = {\left( \sech v \sin u \right)} E_1 - {\left( \sech v \cos u \right)} E_2 = - {\left( \sech v \right)} \bar{E}_1, \\
W \bar{E}_2 & = - \bar{E}_2 {\left( G^i_3 \right)} E_i \\
& = {\left( \sech v \tanh v \cos u \right)} E_1 + {\left( \sech v \tanh v \sin u \right)} E_2 + {\left( \sech^2 v \right)} E_3 \\
& = {\left( \sech v \right)} \bar{E}_2,
\end{align*}
where the Einstein summation convention is assumed. That is,
\begin{align}
W & = \begin{pmatrix*}[c]
- \sech v & 0 \\
0 & \sech v
\end{pmatrix*} \quad \text{relative to} \ {\left( \bar{E}_1, \bar{E}_2 \right)}.
\end{align}
Thus \( K = K^{\mathrm{e}} = - \sech^2 v < 0 \) but \( \boldsymbol{H} = H + \boldsymbol{i} {\star} \tau = 0 \). We see that \( S \) is a minimal surface in \( M \), which is nowhere geodesic and nowhere umbilic. It is also worth noting that \( S \) is not minimal in \( \mathbb{R}^3 \).
\end{pex}

\begin{pex}
We revisit Example~\ref{Ex:Cartan-Schouten}. Let \( S = {\left\{ (x,y,z) \, \middle| \, x^2 + y^2 + z^2 = 1 \right\}} \subseteq M \) be the unit sphere, which can be locally parametrized by
\begin{align}
X {\left( \theta, \varphi \right)} & = {\left( \sin \theta \cos \varphi, \sin \theta \sin \varphi, \cos \theta \right)} \qquad (\theta \in {\left( 0, \boldsymbol{\pi} \right)}, \ \varphi \in \mathbb{R} / 2 \boldsymbol{\pi} \mathbb{Z}).
\end{align}
Note that \( {\left( \bar{E}_1, \bar{E}_2, N \right)} = {\left( \partial_1, \partial_2, \partial_3 \right)} \cdot G {\left( \theta, \varphi \right)} \), where
\begin{align}
G {\left( \theta, \varphi \right)} & = \begin{pmatrix*}[c]
\cos \theta \cos \varphi & - \sin \varphi & \sin \theta \cos \varphi \\
\cos \theta \sin \varphi & \cos \varphi & \sin \theta \sin \varphi \\
- \sin \theta & 0 & \cos \theta
\end{pmatrix*} \in \mathrm{SO}(3),
\end{align}
defines a local oriented orthonormal smooth frame \( {\left( \bar{E}_1, \bar{E}_2 \right)} \) for \( S \). Then the Weingarten map satisfies
\begin{align*}
W \bar{E}_1 & = - \del_{\bar{E}_1} N = - \bar{E}_1 {\left( G^i_3 \right)} \partial_i - \lambda \bar{E}_1 \times N = - \bar{E}_1 + \lambda \bar{E}_2, \\
W \bar{E}_2 & = - \del_{\bar{E}_2} N = - \bar{E}_2 {\left( G^i_3 \right)} \partial_i - \lambda \bar{E}_2 \times N = - \bar{E}_2 - \lambda \bar{E}_1,
\end{align*}
where the Einstein summation convention is assumed. That is,
\begin{align}
W & = \begin{pmatrix*}[r]
-1 & - \lambda \\
\lambda & -1
\end{pmatrix*} \quad \text{relative to} \ {\left( \bar{E}_1, \bar{E}_2 \right)}.
\end{align}
\( W \) is of this form relative to any local oriented orthonormal smooth frame, even at the points \( {\left( 0, 0, \pm 1 \right)} \in S \) by continuity. We see that \( S \) is totally umbilic. Meanwhile, since \( M \) has constant sectional curvature \( - \lambda^2 \), we have \( K = K^{\mathrm{e}} - \lambda^2 = 1 \) by Proposition~\ref{P:Gaussian-curvature}. Therefore, we have \( \int_S K \, \mathrm{d} a = \int_S \mathrm{d} a = 4 \boldsymbol{\pi} = 2 \boldsymbol{\pi} \chi(S) \), which agrees with Proposition~\ref{P:Gauss-Bonnet}.
\end{pex}

\subsubsection*{Acknowledgements}

The author is deeply grateful to the advisor Jinsung Park for invaluable discussions throughout the research. The author appreciates the coadvisor Hyungryul Baik for helpful feedback and comments. The author also thanks Hongjun Lee for careful comments regarding Lemma~\ref{L:Hopf-differential} and Theorem~\ref{M:B}\@. Finally, the author is thankful to the anonymous referee for fruitful suggestions.

\begin{bibdiv}
\begin{biblist}

\bib{MR2261968}{article}{
    author={Bell, Denis},
    title={The Gauss-Bonnet theorem for vector bundles},
    journal={J. Geom.},
    volume={85},
    date={2006},
    number={1-2},
    pages={15--21},
    issn={0047-2468},
    review={\MR{2261968}},
    doi={10.1007/s00022-006-0037-1},
}

\bib{MR0615912}{book}{
    author={Bishop, Richard L.},
    author={Goldberg, Samuel I.},
    title={Tensor analysis on manifolds},
    note={Corrected reprint of the 1968 original},
    publisher={Dover Publications, Inc., New York},
    date={1980},
    pages={viii+280 pp. (loose errata)},
    isbn={0-486-64039-6},
    review={\MR{0615912}},
}

\bib{MR0955072}{article}{
    author={Bryant, Robert L.},
    title={Surfaces of mean curvature one in hyperbolic space},
    language={English, with French summary},
    note={Th{\'e}orie des vari{\'e}t{\'e}s minimales et applications (Palaiseau,
    1983--1984)},
    journal={Ast{\'e}risque},
    number={154-155},
    date={1987},
    pages={12, 321--347, 353 (1988)},
    issn={0303-1179},
    review={\MR{0955072}},
}

\bib{MR1509253}{article}{
    author={Cartan, {\'E}lie},
    title={Sur les vari{\'e}t{\'e}s {\`a} connexion affine et la th{\'e}orie de la relativit{\'e} g{\'e}n{\'e}ralis{\'e}e (premi{\`e}re partie)},
    language={French},
    journal={Ann. Sci. \'Ecole Norm. Sup. (3)},
    volume={40},
    date={1923},
    pages={325--412},
    issn={0012-9593},
    review={\MR{1509253}},
}

\bib{MR1509255}{article}{
    author={Cartan, {\'E}lie},
    title={Sur les vari{\'e}t{\'e}s {\`a} connexion affine, et la th{\'e}orie de la relativit{\'e} g{\'e}n{\'e}ralis{\'e}e (premi{\`e}re partie) (Suite)},
    language={French},
    journal={Ann. Sci. \'Ecole Norm. Sup. (3)},
    volume={41},
    date={1924},
    pages={1--25},
    issn={0012-9593},
    review={\MR{1509255}},
}

\bib{MR1509263}{article}{
    author={Cartan, {\'E}lie},
    title={Sur les vari{\'e}t{\'e}s {\`a} connexion affine, et la th{\'e}orie de la relativit{\'e} g{\'e}n{\'e}ralis{\'e}e (deuxi{\`e}me partie)},
    language={French},
    journal={Ann. Sci. \'Ecole Norm. Sup. (3)},
    volume={42},
    date={1925},
    pages={17--88},
    issn={0012-9593},
    review={\MR{1509263}},
}

\bib{MR0014760}{article}{
    author={Chern, Shiing-shen},
    title={On the curvatura integra in a Riemannian manifold},
    journal={Ann. of Math. (2)},
    volume={46},
    date={1945},
    pages={674--684},
    issn={0003-486X},
    review={\MR{0014760}},
    doi={10.2307/1969203},
}

\bib{MR3311902}{article}{
    author={Daniel, Beno{\^i}t},
    author={Fern{\'a}ndez, Isabel},
    author={Mira, Pablo},
    title={The Gauss map of surfaces in $\widetilde{\mathrm{PSL}}_2 {\left( \mathbb{R} \right)}$},
    journal={Calc. Var. Partial Differential Equations},
    volume={52},
    date={2015},
    number={3-4},
    pages={507--528},
    issn={0944-2669},
    review={\MR{3311902}},
    doi={10.1007/s00526-014-0721-1},
}

\bib{EinsteinTP}{article}{
    author={Einstein, Albert},
    title={Riemann-Geometrie mit Aufrechterhaltung des Begriffes des Fernparallelismus},
    journal={Preussische Akademie der Wissenschaften, Phys.-math. Klasse, Sitzungsberichte},
    date={1928},
    pages={217-221},
}

\bib{MR3404014}{article}{
    author={Fraser, Ailana},
    author={Schoen, Richard},
    title={Uniqueness Theorems for Free Boundary Minimal Disks in Space Forms},
    journal={Int. Math. Res. Not. IMRN},
    date={2015},
    number={17},
    pages={8268--8274},
    issn={1073-7928},
    review={\MR{3404014}},
    doi={10.1093/imrn/rnu192},
}

\bib{MR2037618}{article}{
    author={Goenner, Hubert F. M.},
    title={On the History of Unified Field Theories},
    journal={Living Rev. Relativ.},
    volume={7},
    date={2004},
    pages={2004-2, 152},
    issn={1433-8351},
    review={\MR{2037618}},
    doi={10.12942/lrr-2004-2},
}

\bib{MR2866744}{article}{
    author={Gordeeva, I. A.},
    author={Pan{\cprime}zhenski{\u i}, V. I.},
    author={Stepanov, S. E.},
    title={Riemann-Cartan manifolds},
    language={Russian},
    conference={
        title={Geometry (Russian)},
    },
    book={
        series={Itogi Nauki Tekh. Ser. Sovrem. Mat. Prilozh. Temat. Obz.},
        volume={123},
        publisher={Vseross. Inst. Nauchn. i Tekhn. Inform. (VINITI), Moscow},
    },
    date={2009},
    pages={110--141},
    review={\MR{2866744}},
}

\bib{HimpelCS}{article}{
    author={Himpel, Benjamin},
    title={Lie groups and Chern-Simons Theory},
    status={unpublished lecture note},
    date={2022},
    eprint={https://arxiv.org/abs/2207.11062},
}

\bib{MR0448362}{book}{
    author={Hirsch, Morris W.},
    title={Differential Topology},
    series={Graduate Texts in Mathematics},
    volume={No. 33},
    publisher={Springer-Verlag, New York-Heidelberg},
    date={1976},
    pages={x+221},
    review={\MR{0448362}},
}

\bib{MR1013786}{book}{
    author={Hopf, Heinz},
    title={Differential Geometry in the Large},
    series={Lecture Notes in Mathematics},
    volume={1000},
    edition={2},
    note={Notes taken by Peter Lax and John W. Gray; With a preface by S. S. Chern; With a preface by K. Voss},
    publisher={Springer-Verlag, Berlin},
    date={1989},
    pages={viii+184},
    isbn={3-540-51497-X},
    review={\MR{1013786}},
    doi={10.1007/3-540-39482-6},
}

\bib{MR1873007}{article}{
    author={Kokubu, M.},
    author={Takahashi, M.},
    author={Umehara, M.},
    author={Yamada, K.},
    title={An analogue of minimal surface theory in $\mathrm{SL} {\left( n, \mathbb{C} \right)} / \mathrm{SU}(n)$},
    journal={Trans. Amer. Math. Soc.},
    volume={354},
    date={2002},
    number={4},
    pages={1299--1325},
    issn={0002-9947},
    review={\MR{1873007}},
    doi={10.1090/S0002-9947-01-02935-X},
}

\bib{MR3887684}{book}{
    author={Lee, John M.},
    title={Introduction to Riemannian manifolds},
    series={Graduate Texts in Mathematics},
    volume={176},
    edition={2},
    publisher={Springer, Cham},
    date={2018},
    pages={xiii+437},
    isbn={978-3-319-91754-2},
    isbn={978-3-319-91755-9},
    review={\MR{3887684}},
}

\bib{LeeCS}{article}{
    author={Lee, Dongha},
    title={The renormalization of volume and Chern-Simons invariant for hyperbolic 3-manifolds},
    status={arXiv preprint},
    date={2023},
    eprint={https://arxiv.org/abs/2310.04776},
}

\bib{MR1300410}{book}{
    author={Murray, Richard N.},
    author={Li, Ze Xiang},
    author={Sastry, S. Shankar},
    title={A mathematical introduction to robotic manipulation},
    publisher={CRC Press, Boca Raton, FL},
    date={1994},
    pages={xx+456},
    isbn={0-8493-7981-4},
    review={\MR{1300410}},
}

\bib{MR3012474}{book}{
    author={Meeks, William H., III},
    author={P{\'e}rez, Joaqu{\'i}n},
    title={A survey on classical minimal surface theory},
    series={University Lecture Series},
    volume={60},
    publisher={American Mathematical Society, Providence, RI},
    date={2012},
    pages={x+182},
    isbn={978-0-8218-6912-3},
    review={\MR{3012474}},
    doi={10.1090/ulect/060},
}

\bib{TrautmanEC}{article}{
    author={Trautman, Andrzej},
    title={Einstein-Cartan Theory},
    booktitle={Encyclopedia of Mathematical Physics},
    volume={2},
    publisher={Elsevier Science},
    date={2006},
    pages={189--195},
    isbn={978-0-12-512666-3},
}

\bib{MR3585539}{book}{
    author={Tu, Loring W.},
    title={Differential Geometry},
    series={Graduate Texts in Mathematics},
    volume={275},
    note={Connections, Curvature, and Characteristic Classes},
    publisher={Springer, Cham},
    date={2017},
    pages={xvi+346},
    isbn={978-3-319-55082-4},
    isbn={978-3-319-55084-8},
    review={\MR{3585539}},
    doi={10.1007/978-3-319-55084-8},
}

\end{biblist}
\end{bibdiv}

\end{document}